\definecolor{cadmiumgreen}{rgb}{0.0, 0.42, 0.24}
\definecolor{darkred}{rgb}{.85,0,0}
\newcommand{\midarrow}{node[midway,sloped,allow upside down] {\tikz \draw[->,thick] (-1pt,0)--++(2pt,0);}}
\newcommand{\mmidarrow}[2]{postaction={decorate, decoration={markings, mark=at position .5 with {\arrow{>}}}}}
\newcommand{\Z}{\mathbb Z}
\newcommand{\Q}{\mathbb Q}
\newcommand{\R}{\mathbb R}
\newcommand{\zint}[2]{
  \if #11
    [#2]
  \else
    [#1\,\raisebox{.2ex}.\,\raisebox{.2ex}.\,#2] \fi }
\newcommand{\rquot}[2]{#1\big/#2}
\newcommand{\st}{\,\big|\,}
\newcommand{\ie}[1]{i.e.,~}
\newcommand{\eg}[1]{e.g.,~}
\newcommand{\cf}{cf.~}
\newcommand{\resp}{resp.\ }
\newcommand{\abs}[1]{\lvert #1\rvert}
\newcommand{\simto}{\xrightarrow{\raisebox{-1pt}{\small$\hspace{-1pt}\sim\hspace{0pt}$}}}
\newcommand{\wcdot}{{}\cdot{}}
\renewcommand{\~}{\widetilde}
\renewcommand{\hat}{\widehat}
\renewcommand{\leq}{\leqslant}
\renewcommand{\geq}{\geqslant}
\renewcommand{\d}{\partial}
\renewcommand{\epsilon}{\varepsilon}
\let\ssubset\subset
\renewcommand{\subset}{\subseteq}
\theoremstyle{plain}
\newtheorem{theorem}{Theorem}[section]
\newtheorem*{theorem*}{Theorem}
\newtheorem{proposition}[theorem]{Proposition}
\newtheorem{lemma}[theorem]{Lemma}
\newtheorem{corollary}[theorem]{Corollary}
\newtheorem{claim}[theorem]{Claim}
\theoremstyle{definition}
\newenvironment{definition}
  {\pushQED{\qed}\defii}
  {\popQED\enddefii}
\newenvironment{remark}
  {\pushQED{\qed}\remm}
  {\popQED\endremm}
\newenvironment{example}
  {\pushQED{\qed}\exx}
  {\popQED\endexx}
\newcommand{\M}{\mathcal M}
\newcommand{\Ma}{\mathfrak M}
\newcommand{\E}{E}
\newcommand{\F}{F}
\newcommand{\FF}{\mathbb F}
\newcommand{\Ind}{\mathfrak I}
\newcommand{\J}{\mathcal J}
\newcommand{\G}{\mathcal G}
\renewcommand{\H}{\mathcal H}
\newcommand{\U}{\mathcal U}
\newcommand{\V}{\mathcal V}
\newcommand{\W}{\mathcal W}
\newcommand{\gE}{\mathcal E}
\newcommand{\trees}{\mathcal T}
\newcommand{\RPII}{\R\mathrm{P}^2}
\newcommand{\bound}{\d}
\newcommand{\boundsing}{\bound_\mathrm{sing}}
\renewcommand{\S}{\mathcal S}
\newcommand{\smS}{\S^{\mathrm{sm}}}
\newcommand{\mom}{\mathfrak p}
\newcommand{\vG}{\mathfrak G}
\newcommand{\vH}{\mathfrak H}
\newcommand{\vV}{\mathfrak V}
\newcommand{\vE}{\mathfrak E}
\newcommand{\va}{\mathfrak a}
\newcommand{\vb}{\mathfrak b}
\newcommand{\ve}{\mathfrak e}
\newcommand{\A}{A}
\newcommand{\f}{z}
\newcommand{\fF}{Z}
\newcommand{\sR}{\mathscr R}
\newcommand{\smsR}{{\sR^{\mathrm{sm}}}}
\newcommand{\sZ}{\underline\Z}
\newcommand{\scR}{\underline\R}
\newcommand{\X}{\mathscr X}
\newcommand{\sign}{\mathrm{sign}}
\newcommand{\sgn}{\mathrm{\sigma}}
\let\Im\relax
\newcommand{\Im}{\mathrm{Im}}
\newcommand{\rk}{\mathrm{rk}}
\newcommand{\Kir}{\mathrm{Kir}}
\newcommand{\Sym}{\mathrm{Sym}}
\newcommand{\Ext}{\mathrm{\bigwedge}}
\newcommand{\ratSym}{\mathrm{\~\Sym}}
\newcommand{\diag}{\mathrm{diag}}
\let\det\relax
\newcommand{\det}{\mathrm{det}}
\newcommand{\Chain}{\mathrm{C}}
\newcommand{\Cyc}{\mathrm{Z}}
\newcommand{\Bound}{\mathrm{B}}
\newcommand{\Homl}{\mathrm{H}}
\newcommand{\For}{\mathrm{\mathcal F}}
\newcommand{\Bas}{\mathrm{\mathcal B}}
\let\P\relax
\newcommand{\P}{\mathrm{\mathcal P}}
\newcommand{\cl}{\mathrm{cl}}
\newcommand{\Fr}{\mathrm{fr}}
\newcommand{\MCP}{\mathrm{MCP}}
\newcommand{\PFE}{\mathrm{PFE}}
\newcommand{\size}{\mathrm{size}}
\newcommand{\mhyp}{\mathrm{\mathfrak C}}
\newcommand{\Id}{\mathrm{Id}}
\newcommand{\Tor}{\mathrm{Tor}}
\newcommand{\Jac}{\mathrm{Jac}}
\newcommand{\Trees}{\mathrm{\mathcal T}}
\newcommand{\SF}{\mathrm{\mathcal{SF}}}
\newcommand{\conv}{\mathrm{conv}}
\newcommand{\Vol}{\mathrm{Vol}}
\let\O\relax
\newcommand{\O}{\mathrm{\mathcal O}}
\let\o\relax
\newcommand{\o}{\mathrm{o}}
\newcommand{\CC}{\mathrm{CC}}
\newcommand{\SCC}{\mathrm{SCC}}
\newcommand{\OCC}{\mathrm{OCC}}
\newcommand{\interior}{\mathrm{int}}
\newcommand{\degs}{\mathrm{degs}}
\newcommand{\bighp}{\mathop{\raisebox{-.5em}{\scalebox{2}{$\boxplus$}}}\limits}
\newcommand{\concat}{\star}
\newcommand{\tran}{{^\intercal}}
\newcommand{\multicup}{\uplus}
\newcommand{\mmult}[1]{\cdot_{#1}}
\newcommand{\myand}{\text{ and }}
\newcommand{\compl}[1]{#1^c}
\newcommand{\del}{\backslash}
\newcommand{\mat}{\mathbf}
\newcommand{\rest}[1]{\big|_{#1}}
\newcommand{\mrest}{|}
\newcommand{\contr}[1]{/\!#1}
\newcommand{\card}[1]{|#1|}
\newcommand{\geomreal}[1]{|#1|}
\newcommand{\perm}[1]{\mathfrak S_{#1}}
\newcommand{\freeZmod}[1]{\Z\langle#1\rangle}
\newcommand{\norm}[1]{\lVert#1\rVert}
\newcommand{\Zmod}{\Z}
\newcommand{\vect}[1]{\langle#1\rangle}
\newcommand{\Zvect}[1]{\langle#1\rangle_\Z}
\newcommand{\supp}[1]{\mathrm{supp}(#1)}
\newcommand{\divprod}[2]{\langle #1,#2\rangle}
\newcommand{\hp}{\boxplus}
\def\Hsing_#1{\Homl_{#1,\mathrm{sing}}}
\def\Chainsing_#1{\Chain_{#1,\mathrm{sing}}}
\def\Chainsingreg_#1{\Chain_{#1,\mathrm{sing}}^{\mathrm{reg}}}
\def\Boundsing_#1{\Bound_{#1,\mathrm{sing}}}
\def\Cycsing_#1{\Cyc_{#1,\mathrm{sing}}}
\newcommand{\pbmatrix}[4]{\left(
  \,\raisebox{-.45\height}{\tikz[x=.7cm, y=.7cm]{
  \draw[gray!50] (0,0) rectangle ++(1,1) node[midway, black] {$#1$};
  \draw[gray!50] (1.1,0) rectangle ++(.8,1) node[midway, black] {$#2$};
  \draw[gray!50] (0,-.1) rectangle ++(1,-.8) node[midway, black] {$#3$};
  \draw[gray!50] (1.1,-.1) rectangle ++(.8,-.8) node[midway, black] {$#4$};
  }}
\right)}
\newcommand{\pbmatrixIV}[4]{\left(
  \,\raisebox{-.45\height}{\tikz[x=.7cm, y=.7cm]{
  \draw[gray!50] (0,0) rectangle ++(1.2,1.6) node[midway, black] {$#1$};
  \draw[gray!50] (1.3,0) rectangle ++(1,1.6) node[midway, black] {$#2$};
  \draw[gray!50] (0,-.1) rectangle ++(1.2,-1) node[midway, black] {$#3$};
  \draw[gray!50] (1.3,-.1) rectangle ++(1,-1) node[midway, black] {$#4$};
  }}
\right)}
\newcommand{\pbmatrixV}[4]{\left(\!
  \raisebox{-.4\height}{\tikz[x=.7cm, y=.7cm]{
  \draw[gray!50] (0,0) rectangle ++(1.3,1.3) node[midway, black] {$#1$};
  \draw[gray!50] (1.4,0) rectangle ++(.5,1.3) node[midway, black] {$#2$};
  \draw[gray!50] (0,-.1) rectangle ++(1.3,-.5) node[midway, black] {$#3$};
  \draw[gray!50] (1.4,-.1) rectangle ++(.5,-.5) node[midway, black] {$#4$};
  }}
\!\right)}
\begin{document}

\title{A multidimensional generalization of Symanzik polynomials} \label{chap:symanzik}
\author{Matthieu Piquerez}
\email{matthieu.piquerez@univ-nantes.fr}
\address{Nantes Université, École Centrale Nantes, CNRS, INRIA, LS2N, UMR 6004, France}

\begin{abstract}
Symanzik polynomials are defined on Feynman graphs and they are used in quantum field theory to compute Feynman amplitudes.
They also appear in mathematics from different perspectives. For example, recent results show that they allow to describe asymptotic limits of geometric quantities associated to families of Riemann surfaces.

In this paper, we propose a generalization of Symanzik polynomials to the setting of higher dimensional simplicial complexes and study their basic properties and applications. We state a duality relation between these generalized Symanzik polynomials and what we call Kirchhoff polynomials, which have been introduced in recent generalizations of Kirchhoff's matrix-tree theorem to simplicial complexes. Moreover, we obtain geometric invariants which compute interesting data on triangulable manifolds. As the name indicates, these invariants do not depend on the chosen triangulation.

We furthermore prove a stability theorem concerning the ratio of Symanzik polynomials which extends a stability theorem of Amini to higher dimensional simplicial complexes. In order to show that theorem, we will make great use of matroids, and provide a complete classification of the connected components of the exchange graph of a matroid, a graph which encodes the exchange properties between independent sets of the matroid. We hope that this result could be of independent interest.

Finally, we explain how to generalize Symanzik polynomials to the setting of matroids over hyperfields defined recently by Baker and Bowler.
\end{abstract}

\maketitle



\section{Introduction}

The aim of this paper is to introduce a family of polynomials that we call \emph{generalized Symanzik polynomials}, and to study their geometric and combinatorial properties.

Classical Symanzik polynomials arising in quantum field theory are associated to \emph{Feynman graphs} and used for computing Feynman amplitudes. They are defined as follows.

Let $G=(V,E)$ be a connected graph with vertex set $V$ and edge set $E$. Let $\mom=(\mom_v)_{v\in V}$ be a collection of vectors called the \emph{external momenta} such that each $\mom_v$, the external momentum of $v\in V$, is an element of the real vector space $\R^D$, for some positive integer $D$, which is endowed with a Minkowski bilinear form. Moreover, we suppose that the collection of external momenta verify the conservation of momenta hypothesis, namely that $\sum_{v\in V}\mom_v=0$. Such a pair $(G,\mom)$ is called a \emph{Feynman graph}. In what follows we will only consider the case $D=1$, but the results can be extended to the more general setting as in~\cite{ABBF}.

The first Symanzik polynomial denoted by $\psi_G$ is defined by
\begin{equation} \label{5:eqn:first_Symanzik_polynomial}
\psi_G(x):=\sum_{T\in\Trees}\prod_{e\not\in T}x_e,
\end{equation}
where $\Trees$ denotes the set of spanning trees of $G$, where $x=(x_e)_{e\in E}$ is a collection of variables indexed by the edges of the graph, and the product for a spanning tree $T \in \Trees$ is on all edges of $G$ which are not in $T$. (A spanning tree of $G=(V,E)$ is a connected subgraph of $G$ which has vertex set $V$ and which does not have any cycle.) Note in particular that the first Symanzik polynomial does not depend on the external momenta.

The second Symanzik polynomial denoted $\phi_G$ is defined by
\begin{equation} \label{5:eqn:second_Symanzik_polynomial}
\phi_G(\mom, x):=\sum_{F\in\SF_2}q(F)\prod_{e\not\in F}x_e.
\end{equation}
In this formula, $\SF_2$ denotes the set of spanning forests of $G$ which have two connected components. (A spanning forest of $G$ is a subgraph with vertex set $V$ and without any cycle.) For a spanning forest $F\in\SF_2$, the term $q(F)$ is defined by $q(F):=-\langle\mom_{F_1},\mom_{F_2}\rangle$ where $F_1$ and $F_2$ are the two connected components of $F$, and where, for $i\in\{1,2\}$, $\mom_{F_i}$ is the sum of the momenta of vertices in $F_i$. Using these polynomials, the Feynman amplitude can be computed then as a path integral of $\exp(-i\phi_G/\psi_G)$.

\medskip

The above polynomials have many known interesting properties, most of them are summarized in~\cite{BW10}. In this paper, we present natural generalizations of the above definitions. In particular, we are going to extend the setting to higher dimensional simplicial complexes. Other generalizations already exist, see \eg,~\cite{GR07} and~\cite{KRTW10}, though the purpose is different from ours. In addition to introducing a larger family of polynomials with interesting properties, we hope that this generalization will provide a better understanding of these polynomials and will widens their applications.

\medskip

Symanzik polynomials are known to have tight connection to other concepts in mathematics. Before going to the heart of our construction and presenting an overview of our results, we would like to start by highlighting some of these links and making a series of comments to motivate the study undertaken in this paper.

\medskip

First, Symanzik polynomials of graphs are known to be in a sense \emph{dual} to Kirchhoff polynomials. These latter polynomials are those obtained applying the well-known (weighted) Kirchhoff matrix-tree theorem, which counts the number of spanning trees of a graph as the determinant of a matrix linked to the Laplacian of the graph. Recently, the notion of a spanning tree has been extended to the case of higher dimensional simplicial complexes, and Kirchhoff's theorem has been generalized to this setting, c.f.~\cites{DKM09, Lyo09} and~\cite{BK16}. In order to generalize the definition of Symanzik polynomials to this setting, it is thus natural to exploit and formulate an appropriate version of the duality theorem in this setting. This is what we have done to find our definition of generalized Symanzik polynomials. In the classical case, this duality infers a determinantal formula for Symanzik polynomials. This is still true after the generalization.

\medskip

This leads us to our second point which concerns \emph{determinantal probability measures} (see for instance~\cite{Lyo03}). This tight relation between Kirchhoff polynomials, spanning trees and determinants is studied thoroughly in the paper~\cite{Lyo09} of Lyons on random complexes. In fact, what appears here are determinantal probabilities. For instance, the probability for a uniform (or suitably weighted) spanning tree of a graph to contain a specific edge can be easily computed because it is given by the determinant of some matrix. The Laplacian of the graph is used to define this matrix. In the same way, Kirchhoff polynomials are also obtained using this Laplacian. By analogy, the matrix whose determinant gives the first Symanzik polynomial allows us to compute the probability for a spanning tree not to contain a specific edge. In this paper, we will also generalize Kirchhoff and Symanzik polynomials in two different ways: first to \emph{spanning forests} and second to \emph{higher order determinants} (the classical definition is of order two). In~\cite{KL22}, some generalizations of Kirchhoff and Symanzik polynomials also emerge from the study of some families of determinantal random subgraphs. Concerning higher order determinants, they naturally leads to \emph{hyperdeterminants} which are also linked to determinantal probability measures as it was shown in~\cite{EG09}.

\medskip

This brings us to the third point, which is the link to \emph{the theory of matroids}. Matroids are mathematical objects obtained by abstracting and generalizing some of the main notions of linear algebra and, in particular, of graph theory, as spanning trees for example (we refer to~\cite{Oxl11} for further information). In~\cite{BW10}, the link between matroid theory and Symanzik polynomials is noticed. We go further than~\cite{BW10} by defining Kirchhoff and Symanzik polynomials for matroids and for their recent generalization \emph{matroids over hyperfields} obtained by Baker and Bowler in~\cite{BB16} (see also~\cite{BB19}). This generalization is needed to understand the generalized operations of deletion and contraction, which are less natural than in the case of graphs. These two operations are at the heart of many properties of classical Kirchhoff and Symanzik polynomials, in particular deletion-contraction formula and a partial factorization of the polynomials used, for instance, in~\cite{Bro15} (see below). Behind all this, there is the multivariate Tutte polynomial, which is also known as the Potts model partition function by physicists (\cf \cite{Sok05}*{(4.11) and (2.18)}). Unfortunately, a canonical Tutte polynomial for matroids over hyperfields does not seem to exist. Though, using the recent work of Dupont, Fink and Moci in~\cite{DFM18} (itself based on~\cite{KMT18}), we will see that Kirchhoff and Symanzik polynomials over hyperfields still preserve all the properties one could expect of Tutte polynomial and its specializations.

Apart from this generalization, we show that matroids play a major role in understanding some other interesting features of the Symanzik polynomials. These purely combinatorial results about matroids, which we hope should be of independent interest, concern the exchange properties between \emph{independent sets} of a general matroid, which generalize well-known exchange properties between bases. For a connected graph, it is well-known that any spanning tree can be obtained from any other by a sequence consisting of exchanging one edge at a time. The exchange properties between spanning trees and spanning 2-forests in a graph were studied by Amini in~\cite{Ami19} for the purpose of understanding the asymptotic of some geometrically defined quantities in a family of Riemann surfaces. Generalizing the definition of the exchange graph of a graph from~\cite{Ami19} to any general matroid, we provide a complete characterization of the connected components of this graph and use it to generalize the \emph{stability theorem} of~\cite{Ami19} to higher dimensional simplicial complexes.

Let us make two further remarks at this point. It is well-known that the connectivity properties of different kind of exchange graphs is the first step for creating simple and efficient random generation of bases based on Markov chains, c.f.~\cite{FM92} and the recent work of~\cite{ALOGV19} based on combinatorial Hodge theory for matroids~\cite{AHK}. What we prove in this paper is another geometric manifestation of the exchange properties in matroids. Furthermore, the results we obtain on the exchange graph of a matroid has very close ties to an old conjecture of White on toric ideals of matroids~\cites{Whi80, Bla08, LM14}. In fact, by the work of~\cite{Bla08}, a part of White's conjecture can be rephrased as whether a particular generalization of our exchange graph is always connected, c.f. Remark~\ref{5:rem:White_s_conjecture}. So our results and methods might lead to a new approach in that direction.

\medskip

As we mentioned previously, Symanzik polynomials for graphs originated from Physics, and recent advances has shown that their properties govern the mathematics of Feynman amplitudes, see \eg \cites{BB03, BEK06, Bro15, BS12, BY11}. In particular, Brown shows in~\cite{Bro15} how certain arithmetic factorization properties satisfied by the Symanzik polynomials of graphs and their minors play a crucial role in defining the \emph{cosmic Galois group}, a Galois group for Feynman amplitudes. We extend these properties to generalized Symanzik polynomials in this article.

From a point of view inclined towards Physics, we will show that our generalized polynomials present a certain stability (rather different from the stability theorem evoked above). This means that if we start with a sufficiently nice topological space $\S$, then the Symanzik polynomials of any triangulation of $\S$ come from (more precisely can be factorized into) the same Symanzik polynomial, which can be directly defined then as the Symanzik polynomial of $\S$. In other words, this stability indicates that these polynomials might be useful for computing interesting features of continuous objects by approximating them with discrete ones. For instance, path integrals are fundamental in the formulation of quantum field theory. But they are generally ill-defined. A way to go through this difficulty is to discretize the paths, and Regge calculus gives a way to do this (originally in the framework of quantum gravity), see, \eg, the original article of Regge~\cite{Reg61}, the introductory paper~\cite{Wil92}, and a more recent approach~\cite{Ori07} in the framework of group field theory. We have found striking similarities between the geometry of generalized Symanzik polynomials and Regge calculus, although at this point we are not still able to formalize this.

About the geometrical data computed by generalized Symanzik polynomials, in simple cases as compact orientable manifolds endowed with a volume form, the Symanzik polynomial simply computes the volume of the manifold (setting the good values to variables). There is not always a simple interpretation for the value computed in more complex cases. However, for general metric graphs, the first Symanzik polynomial evaluated at positive reals computes the volume of the Jacobian torus of the graph, which is defined in~\cite{KS00}.

The Symanzik polynomials of graphs has been generalized in the context of non-commuta\-tive quantum field theory~\cite{KRTW10}. The article~\cite{KRVT11} shows how these new polynomial invariants derive from the Bollob\'as-Riordan polynomial~\cite{BR02}, which is a generalization of the Tutte polynomial for ribbon graphs. The Bollob\'as-Riordan polynomial itself has been generalized to higher rank graphs (which are, under some conditions, dual to some pseudo simplicial complexes of higher dimension) in~\cite{AGH13} and~\cite{Avo16}. Two recent papers~\cite{KMT18} and~\cite{DFM18} provides a generalization of these polynomials in the context of Hopf algebras. Another recent paper~\cite{BGC17} studies this kind of generalization for a different purpose. It concerns the geometric Brownian motion, also linked to quantum theory. As far as we know, multidimensional generalizations of Symanzik polynomials have not been studied prior to this work, and it would be certainly valuable to compare our approach to the above cited works.

\medskip

Symanzik polynomials of graphs form a family of graph polynomials with very interesting properties. As already seen, they are specializations of the multivariate Tutte polynomial. Recent papers study graph polynomials which usually reveal surprising geometric properties. It is not hard to prove that the first Symanzik polynomials of the connected graphs verify the half-plane property, \ie, they are nonvanishing whenever all the variables lie in the open right complex half-plane. The papers~\cites{COSW04, Bra07} and~\cite{BBL07} study invariant polynomials verifying this property and other linked properties. They might be extended in some cases to our generalization. Another work that might be linked to ours is \cite{BH20}. In this article, the authors prove that some specializations of the homogenized multivariate Tutte polynomial of a matroid are Lorentzian, which implies numerous important properties similar to thoses mentioned above.

Moreover, in~\cite{Ami19}, it has been proved that a bounded geometrical deformation of the graph only induces a variation of the ratio of the two Symanzik polynomials that is bounded independently of the initial geometry. At the end of this article, we will generalize this stability theorem. The initial goal of Amini in~\cite{Ami19} was to give a combinatorial proof of a result he obtained with Bloch, Burgos Gil and Fres\'an in~\cite{ABBF}. In that work, they expressed the asymptotic of the Archimedean height pairing between two $0$-divisors on degenerating families of Riemann surfaces as to the ratio of the two Symanzik polynomials. The generalization presented in this paper should have links to the generalization of that work to higher dimensional varieties. In particular, as in~\cite{ABBF}, it should have links with the nilpotent orbit theorem and the SL2 orbit theorem. In \cite{AN-hybrid-moduli} which studies this time the asymptotic of canonical measures on curves, some part of the measure obtained at the limit can be defined using the Kirchhoff polynomial. We do not know if our generalized polynomial might appear in some generalization of this result to higher dimensions.

In another direction, Scott and Sokal study in~\cite{SS14} strict monotonicity properties of the inverse powers of the Kirchhoff polynomials of graphs and matroids. They give a complete characterization of the exponents in the strict monotonicity range for series-parallel graphs, and raise several questions and conjectures. One might naturally wonder if such results could be generalized to the setting of generalized Symanzik polynomials studied in this paper.

\medskip

In the rest of this introduction, we give a quick overview of the main results of this paper.

\subsection{The generalization} \label{5:subsec:intro:generalization}

The idea of the generalization comes from Kirchhoff's matrix-tree theorem (observed by Kirchhoff~\cite{Kir1847}) in its weighted form. We briefly sketch this idea here. Example~\ref{5:ex:Kirchhoff_s_thm_1} contains all the details. Let $G=(V,E)$ be a connected graph of vertex set $V$ and edge set $E=\{e_1, \dots, e_n\}$. Set a weight $y_e\in\R$ on each edge $e\in E$.
Set $Q$ the incident matrix of $G$, and $Y$ the diagonal matrix $\diag(y_{e_1},\dots, y_{e_n})$. Then, the theorem states that
\begin{equation} \label{5:eqn:intro:Kirchhoff_s_thm}
\sum_{T\in\Trees}\prod_{e\in T}y_e=\det(\~QY{\~Q}^\tran),
\end{equation}
where we removed a row from $Q$ to get $\~Q$. Let $E'\subset E$ be a subset of $\card V-1$ edges. We restrict the matrix $\~Q$ to the columns indexed by the elements of $E'$. We denote this square matrix by $\~Q_{E'}$. A step of the proof is to notice that $\det(\~Q_{E'})^2$ equals $1$ if $E'$ corresponds to a spanning tree of $G$, $0$ otherwise. Thus, the first Symanzik polynomial verifies
\[\psi_G(x)=\sum_{\substack{E'\subset E \\ \card{E'}=\card{V}-1}}\det(\~Q_{E'})^2\prod_{e\not\in E'}x_e. \]
In the right-hand member, the graph $G$ is almost absent. The main data we use is its incident matrix. Thus, one can define the Symanzik polynomial of any matrix.

However, we needed to delete a row from the matrix. When the matrix does not come from graphs, we could have to remove several rows, and the result could depend on the chosen rows. To avoid any problem, we introduce a very useful tool: the standard inner product on the exterior algebra. This is used by Lyons in~\cite{Lyo03} to deal with a subject not far from ours, as explained above.

\bigskip

Before going through details of our constructions, we need to introduce some terminology which will be used all through the paper.

\vspace{-.5em}

\subsection*{Notations and conventions}

If $p, q$ are integers with $p\leq q$, then the set $\{p, p+1, \dots, q\}$ will be written $\zint pq$, or more simply $\zint1q$ if $p=1$.
Let $I$ be a finite set. Then $\card I$ is its cardinality and $\P(I)$ is its power set. If $J\in\P(I)$ is a subset of $I$ and if there is no ambiguity, then $\compl J:=I\setminus J$ denotes the complement of $J$.
If $i\in\compl I$, then $I+i:=I\cup\{i\}$, and if $i\in I$, then $I-i:=I\setminus\{i\}$ (using these notations means, respectively, that $i\in\compl I$ and that $i\in I$). If there is an ordering on the elements $I$, we will also use $I$ to denote the ascending family of elements in $I$ (see below).

Let $n$ be a positive integer. In the whole article, $x=(x_1, \dots, x_n)$ will be a family of variables. $\Z[x]$ is the ring of polynomials over $\Z$ with variables $x_1, \dots, x_n$. Following a usual notation, if $I\subset\zint1n$,
\[ x^I:= \prod_{i\in I}x_i. \]

If $\Delta$ is a set, $\freeZmod\Delta$ is the free $\Z$-module on $\Delta$.
By convention, if $a\in\Z$, then
\[ a^0=\left\{ \begin{array}{ll}
0 \qquad&\text{if $a=0$,} \\
1 \qquad&\text{otherwise.}
\end{array} \right. \]

\smallskip

Let $n, p$ be positive integers. Then $\M_{p,n}(\Z)$ will denote the set of matrices with $p$ rows and $n$ columns over $\Z$ and $\M_n(\Z)$ is a simpler notation for $\M_{n,n}(\Z)$. If $P\in\M_{n,p}(\Z)$, then $P^\tran\in\M_{p,n}(\Z)$ denotes the transpose of $P$.

Let $n, p, q$ be positive integers. Let $u=(u_1, \dots, u_n)$ be a family of $n$ vectors in $\R^p$. Then, when there is no ambiguity, the same letter in uppercase, $U$, will denote the matrix in $\M_{p,n}(\R)$ whose columns are the coordinates of the vectors of $u$. Reciprocally, if $U$ is a matrix, then one can associate to $U$ a family of vectors $u$. If $I=(i_1, \dots, i_q)$ is a family of elements of $\zint1n$, then $u_I$ will be the family $(u_{i_1}, \dots, u_{i_q})$, and $U_I\in\M_{p,q}(\R)$ will be the associated matrix. We will sometimes use this notation for variables: $x_I:=(x_{i_1}, \dots, x_{i_q})$. Moreover, $u^\tran$ will be the family of $p$ vectors in $\R^n$ associated to $U^\tran$. If $I$ is an (unordered) subset of $\zint1n$ of size $q$, then $u_I$, \resp $U_I$, will denote the family $u_{(i_1, \dots, i_q)}$, \resp the matrix $U_{(i_1, \dots, i_q)}$, where $I=\{i_1, \dots, i_q\}$ and $i_1<i_2<\dots<i_q$.

If $u$ and $v$ are two families of elements of a same set of respective size $n$ and $m$, then $u\concat v$ will denote the concatenation of the two families, \ie,
\[ u\concat v=(u_1,\dots,u_n, v_1, \dots, v_m). \]
By extension, if $U\in\M_{p,n}(\R)$ and $V\in\M_{p,m}(\R)$ are two matrices, then $U\concat V$ is the matrix of $\M_{p,n+m}(\R)$ associated to $u\concat v$.

Let $\Ext \R^p=\bigoplus_{l\geq0}\Ext^l \R^p$ be the exterior algebra over $\R^p$. With the notations of the previous paragraphs, we will denote by $\mat u\in\Ext^n \R^p$, \resp $\mat u_I\in\Ext^q \R^p$, the exterior product $u_1\wedge\dots\wedge u_l$, \resp $u_{i_1}\wedge\dots\wedge u_{i_q}$. We endow $\Ext\R^p$ with the standard inner product: if $n$ and $m$ are two nonnegative integers, if $u$, $v$ are two families of respective sizes $n$ and $m$ in $\R^p$, then
\[ (\mat u, \mat v)=\begin{cases}
    \det(U^\tran V) & \text{if $n=m$,} \\
    0 & \text{otherwise.}
  \end{cases} \]
We have the associated norm $\norm{\mat u}:=(\mat u, \mat u)^{1/2}$.

Note that, if $e$ is the standard basis of $\R^p$, then the family $(\mat e_I)_{I\subset\zint1p, \card I=l}$ forms an orthonormal basis of $\Ext^l\R^p$.

We have the standard inclusion $\Z^p\ssubset\R^p$. If $F$ is a vector subspace of $\R^p$, then we denote by $F_\Z$ the sublattice $F\cap\Z^p$. If $u$ is a family of vectors in $\Z^p\ssubset\R^p$, then we denote by $\Zmod u$ the sub-$\Z$-module generated by $u$, and by $\vect{u}$ the subspace of $\R^p$ spanned by $u$. If $A$ is a sub-$\Z$-module of $\Z^p$, we will denote by $\norm A$ the covolume of $A$, \ie, the value $\norm{\mat f}$ where $f$ is any basis of $A$.

If $U\in\M_{p,n}(\Z)$, then $\Im(U)$, \resp $\ker(U)$, will always denote vector subspaces, \ie, the image, \resp kernel, of $U$ in $\R^p$, \resp $\R^n$.

\bigskip

Let $n$ and $p$ be two positive integers. Let $u$ be a family of $n$ vectors of $\Z^p\ssubset\R^p$. Let $r$ be the rank of $u$. We define the Symanzik polynomial of $u$ by
\[ \Sym_2(u;x):=\frac1{{\norm{\Zvect{u}}}^2}\sum_{\substack{ I\subset\zint1n \\ \card I=r}}\norm{\mat u_I}^2 x^{\compl I}. \]

Actually, one can replace the exponents $2$ by any nonnegative even integer $k$. We will call $k$ the \emph{order} of the polynomial (see Definition~\ref{5:defi:Symanzik_polynomials}). The order $0$ has some theoretical interests, as we will see for matroids. Moreover, Theorem~\ref{5:thm:variation} about stability holds for every orders. That is why, we will study the case of all the orders in this article. This widens the set of interesting polynomials. However, in this introduction, we set $k=2$.

The main result of Section~\ref{5:sec:duality} is the close link between Symanzik polynomials and the Kirchhoff theorem. To enlight this link, we introduce the Kirchhoff polynomial of $u$:
\[ \Kir_2(u;x):=\frac1{{\norm{\Zvect{u}}}^2}\sum_{\substack{ I\subset\zint1n \\ \card I=r}}\norm{\mat u_I}^2 x^I. \]
Then, the link is given by the duality theorem (Theorem~\ref{5:thm:duality}). If $v$ is a family of maximal rank of $\ker(U)_\Z$, then the theorem states that there exists an explicit factor $c\in\Q$ such that
\[ \Kir_2(v^\tran; x)=c\Sym_2(u; x). \]

Here is the generalization of the second Symanzik polynomial in a specific case (see Definition~\ref{5:defi:Symanzik_polynomials_with_parameters} for the general case). If $a$ is an element of $\Zmod u$, then the Symanzik polynomial of $u$ with parameter $a$ is
\[ \Sym_2(u; (a); x):=\Sym_2(u\concat(a); x\concat (0)). \]

The justification of the definitions of Symanzik polynomials without parameters and of Kirchhoff polynomials are given respectively in Examples~\ref{5:ex:first_Symanzik_polynomial} and~\ref{5:ex:Kirchhoff_s_thm_1}. However, the link between Symanzik polynomials with parameters and second Symanzik polynomials is harder to establish. We need Proposition~\ref{5:prop:Symanzik_and_orientations}. In this proposition, some orientations hidden in Symanzik polynomials with parameters appear. These orientations are already well-known. They are called \emph{chirotopes} in the theory of oriented matroid (see~\cites{FL78, GRA13}). One can think about them in the following way. We take a spanning $2$-forests $(F_1, F_2)$ of a graph. Then we can orient all edges that link $F_1$ and $F_2$, from $F_1$ to $F_2$. Much detail is given in Example~\ref{5:ex:second_Symanzik_polynomial}.

Propositions~\ref{5:prop:determinantal_formula},~\ref{5:prop:Symanzik_determinantal_formula} and~\ref{5:prop:Symanzik_with_parameters_determinantal_formula} state determinantal formul\ae\ similar to that of Equation~\eqref{5:eqn:intro:Kirchhoff_s_thm}. For instance, let $v$ be a basis of $\ker(U)$ belonging to $\Z^n$, $a\in\Zmod u$ and $b\in\Z^n$ such that $Ub=a$. Set $\~v:=v\concat(b)$. Then,
\[ \Sym_2(u; (a); x)=c\det(\~V^\tran X\~V), \]
for some factor $c$ we explicit in Proposition~\ref{5:prop:Symanzik_determinantal_formula}, where $X=\diag(x_1,\dots,x_n)$.

\subsection{Deletion-contraction formula and partial factorization} \label{5:subsec:intro:deletion-contraction_and_partial_factorization}

Among the many interesting properties of classical Symanzik polynomials, we will generalize two of them of particular interest.

The first one is the deletion-contraction formula (see, \eg,~\cites{BW10, Bro15}). In the classical case, if $G=(V,E)$ is a graph and if $e$ is edge of $G$, then
\begin{equation} \label{5:eqn:deletion_contraction_graph}
\psi_G(x)=x_e\psi_{G-e}(x_{E-e})+\psi_{G\contr\{e\}}(x_{E-e}),
\end{equation}
where $G-e$ is the graph without $e$ and $G\contr\{e\}$ is the graph obtained from $G$ by contracting $e$ (\ie, we removed the edge $e$ and identify the two extremities of $e$). We will provide a generalized version of this property for polynomials defined in this paper. In the context of families of vectors, the result will be the same except that deletion and contraction are more involved to define. For this reason, we do not give more details in this introduction and refer to Section~\ref{5:sec:deletion_contraction}.

The second formula we generalize is a partial factorization, used for example in~\cite{Bro15}. Let $G=(V,E)$ be a graph and let $\gamma=(V,F)$ be a spanning subgraph of $G$. If $G\contr\gamma$ is the graph obtained from $G$ by contracting every edge in $\gamma$, then
\begin{equation} \label{5:eqn:partial_factorization}
\psi_G(x)=\psi_\gamma(x_F)\psi_{G\contr\gamma}(x_{E\setminus F})+R^\psi_{\gamma,G}(x),
\end{equation}
where the degree of $R^\psi_{\gamma,G}(x)$ in $x_F$ is greater than the degree of $\psi_\gamma(x_F)$. See Proposition~\ref{5:prop:partial_factorization} to see precisely what we mean here. This time, the generalized formula is the same up to a factor. Once more, we refer to Section~\ref{5:sec:deletion_contraction} for more details.

\subsection{Geometrical aspects of Symanzik polynomials}

Following Kalai's idea in~\cite{Kal83}, Duval, Klivans and Martin stated in~\cite{DKM09} a generalization of Kirchhoff's theorem for simplicial complexes. This theorem counts the number of generalized spanning trees with some weights. In Section~\ref{5:subsec:forest}, we will call these spanning trees \emph{0-forests} (or, more exactly, we use the more convenient definition of spanning trees of Bernardi and Klivans in~\cite{BK16}). Readers who are not familiar with simplicial homology theory could consult the details in Section~\ref{5:subsec:forest}.

The definition of a $0$-forest is really simple: it is a maximal simplicial subcomplex without nontrivial cycles of maximal dimension. To be more precise, let $\Delta$ be a finite simplicial complex of dimension $d$, and let $\bound_\Delta$ be the $d$-th boundary operator of $\Delta$. Let $\Gamma$ be a subcomplex of $\Delta$ with the same $(d-1)$-skeleton, and let $\bound_\Gamma$ be the $d$-th boundary operator of $\Gamma$. Then, for an integer $\kappa$, $\Gamma$ is a \emph{$\kappa$-forest} of $\Delta$ if $\ker(\bound_\Gamma)=\{0\}$ and $\rk(\bound_\Delta)-\rk(\bound_\Gamma)=\kappa$. Forests in higher dimensions mostly behave like forests in graphs. Moreover, the two notions coincide on connected graphs. In this case, $\kappa$-forests are the spanning forests with $\kappa+1$ connected components.

The boundary operator $\bound_\Delta$ is a linear map. Let $U$ be the associated matrix for some natural bases of simplicial $d$-chains and of simplicial $(d-1)$-chains. It is natural to define the Symanzik polynomial, \resp Kirchhoff polynomial, of $\Delta$ as the one of $u$. Then $\Kir_2(\Delta;x)$ is what we get applying the higher dimensional Kirchhoff weighted theorem to $\Delta$ (see Theorem~\ref{5:thm:Kirchhoff_s}):
\[ \Kir_2(\Delta; x)=\card{\Tor(\Homl_{d-1}(\Delta))}^2\sum_{\text{$\Gamma$ $0$-forest of $\Delta$}}\card{\rquot{\Bound_{d-1}(\Delta)}{\Bound_{d-1}(\Gamma)}}^2\prod_{\delta\text{ $d$-face of }\Gamma}x_\delta, \]
where $\Bound_{d-1}(\Delta)$, \resp $\Bound_{d-1}(\Gamma)$, is the group of $(d-1)$-boundaries of $\Delta$, \resp of $\Gamma$, and $\Tor(\Homl_{d-1}(\Delta))$ denotes the torsion part of the $(d-1)$-th homology group of $\Delta$.

Unlike the case of graphs, setting $x=(1,\dots,1)$ does not give the number of $0$-forests. Finding this number is difficult in general: there exist examples where all the coefficients of the polynomial are not equal (look at Example~\ref{5:ex:possible_Symanzik_polynomials}). Knowing the value of the Kirchhoff polynomial for all order $k$ might help us.

However, the weighted number of $0$-forests for the order $k=2$ seems to be the most natural object to consider. The first reason is in~\cite{BK16}. The authors explain that this number counts $0$-forests of $\Delta$ taking into account a \emph{fitted orientation}. The second reason is that Kalai computed in~\cite{Kal83} the weighted number of $0$-forests of the complete $d$-dimensional simplicial complex over $N$ vertices. He obtains a pretty nice formula: $N^{\binom{N-2}d}$.

\smallskip

Now, for each facet $\delta$ of $\Delta$, assign to the variable $x_\delta$ a positive number $y_\delta\in\R_+$ that represents the volume of $\delta$. This is natural. For example, let $\S$ be a compact oriented manifold of dimension $d$, and let $\omega$ be a volume form on it. Let $\Delta$ be a triangulation of $\S$ (see Section~\ref{5:subsec:triangulable_space} for a rigorous definition of triangulation). Then $\omega$ induces a volume $y_\delta\in\R_+$ for each facet $\delta$ of $\Delta$. It is not hard to verify that
\[ \Sym_2(\Delta; y)=\Vol(\S). \]
This is more general. Set $\S$ the topological space of any $d$-dimensional finite CW-complex. We endow $\S$ with a diffuse measure $\mu$ (\ie, a measure which is $0$ on any subset homeomorphic to $[0;1]^{d-1}$). Let $\Delta$ be a triangulation of $\S$, and $y=(y_\delta)$ be the induced volume for each facet $\delta$. Then
\[ \Sym_2(\Delta; y)\text{ does not depend on the chosen triangulation $\Delta$.} \]
The proof essentially consists on defining directly $\Sym_2(\S; \mu)$, which will be the value we get for any triangulation.

The case of a metric graph $\G$ is of particular interest. $\Sym_2(\G; \mu)$ (with $\mu$ induced by the metric) is the volume of the Jacobian torus of $\G$ introduced by Kotani and Sunada in~\cite{KS00} (see Example~\ref{5:ex:metric_graph}).

\subsection{Stability theorem}

Let $v$ be any free family of $n$ elements of $\R^p$. Let $a$ be an element of $\R^p$ independent from $v$ and let $w=v\concat(a)$. Let $y(t)=(y_1(t), \dots, y_n(t))$ be a family of $n$ elements of $\R_+$ depending on some parameter $t$. Let $Y(t)$ be the diagonal matrix $\diag(y_1(t), \dots, y_n(t))$. Let $\fF(t)$ be an element of $\M_n(\R)$ depending on $t$ whose entries are bounded. Theorem~\ref{5:thm:variation} states, with different notations, that there exists two constant $c, C\in\R_+$ such that, for any $t$ verifying $y_1(t), \dots, y_n(t)\geq C$,
\begin{equation} \label{5:eqn:intro:stability}
\left|\frac{\det(W^\tran Y(t)W)}{\det(V^\tran Y(t)V)}-\frac{\det(W^\tran(Y(t)+\fF(t))W)}{\det(V^\tran(Y(t)+\fF(t))V)}\right|\leqslant c.
\end{equation}

This result generalizes Theorem 1.1 of~\cite{Ami19}. Notice that both ratios are of degree $1$ in $y$. Moreover, even if the ratios were of degree zero, terms like $y_1/y_2$ are unbounded at infinity. Thus this property is quite rare among ratios of polynomials and it is somehow surprising that Symanzik rational functions verifies it.

Let us roughly explain what this theorem implies for Symanzik polynomials. For the sake of simplicity, we assume that $\fF$ is the diagonal matrix $\diag(\f_1(t), \dots, \f_n(t))$ for all $t$. One can think about the left-hand member of~\eqref{5:eqn:intro:stability} as the absolute value of
\[ \frac{\Sym_2(\Delta; (b); y)}{\Sym_2(\Delta; y)}-\frac{\Sym_2(\Delta; (b); y+\f)}{\Sym_2(\Delta; y+\f)}, \]
for some simplicial complex $\Delta$ and some parameter $b$ (which is a $(d-1)$-boundary of $\Delta$). Thus, the first ratio simply corresponds to the ratio of the two Symanzik polynomials. The second one corresponds to the same ratio after a bounded deformation of the geometry of $\Delta$. Hence, such a deformation only induces a variation of the ratio of the two Symanzik polynomials that is bounded independently of how great are the volumes of the facets.

Surprisingly, in order to prove this stability theorem, we crucially use a theorem describing the connected components of the exchange graph of some matroid. This theorem is quite interesting by itself, and for this reason, it seems important to us to give some details about it in this introduction. More can be found in the corresponding section.

\subsection{Exchange graph of a matroid}

Let $T$ and $T'$ be two spanning trees on some connected graph $G$. There is a well-known property: for any edge $e$ of $T$, there exists an edge $e'$ of $T'$ such that $T-e+e'$ and $T'-e'+e$ are still spanning trees of $G$. We will consider another kind of exchanges. Let $F$ and $F'$ be two spanning forests of $G$. We authorize the following operation: if there exists $e\in F$ such that $F'+e$ is still a forest, one can remove $e$ from $F$ and add it to $F'$. Following the same rule, one can also take an edge from $F'$ and add it to $F$. Then, one can wonder which pairs of forests can be obtained from $F$ and $F'$. Moreover, what happens if we add the constraint that the exchanges have to alternate (first from $F$ to $F'$, then from $F'$ to $F$, etc.)? We answer these questions in this article, generalizing the partial answer of~\cite{Ami19}*{Theorem 2.13}.

The natural objects to consider for this kind of questions are matroids. In fact, forests of a graph, but also of a simplicial complex, are encoded by the independents of the matroid associated to this graph. In this setting, the previous questions can be reformulated thanks to the \emph{exchange graph} of the matroid. Let $\Ma$ be a matroid and let $\Ind$ be its set of independents. The exchange graph $\G=(\V,\gE)$ associated to $\Ma$ is the graph with vertex set $\V:=\Ind\times\Ind$ and edge set $\gE$ such that two vertices $(I_1,I_2)$ and $(I'_1, I'_2)$ are adjacent if there exists $i\in\E$ such that either, $I'_1=I_1+i$ and $I'_2=I_2-i$, or, $I'_1=I_1-i$ and $I'_2=I_2+i$. The first question now corresponds to: \enquote{under which conditions two vertices $(I_1,I_2)$ and $(I'_1,I'_2)$ of $\V$ are in the same connected component of $\G$ ?} For the second question, one has to restrict $\G$ to some appropriate induced subgraph (see Section~\ref{5:sec:exchange_graph}).

A trivial condition in both cases is that $I_1\multicup I_2$ (the union with multiplicities) must equal $I'_1\multicup I'_2$. But this is not sufficient. For example, there cannot be any exchange between two bases (\ie, every pair of bases is isolated in $\G$) because bases have maximum number of elements among independent sets. More generally, one can find other isolated pairs of independents. If such a pair $(A,B)$ is included in some larger pair $(I,J)$ (in the sense $A\subset I$ and $B\subset J$), no exchange will happen between $A$ and $B$. This new kind of constraints is encoded by the \emph{maximal codependent pair (or MCP)} of $(I,J)$. One can define $\MCP(I,J)$ as the largest vertex $(A,B)$ isolated in $\G$ and included in $(I,J)$ (we will prove that such a vertex always exists). In fact, what we have introduced here is sufficient to classify the connected components of $\G$.

\begin{theorem*}[Corollary~\ref{5:cor:exchange_graph_2}]
Let $(I,J)$, $(I',J')$ be two arbitrary vertices of $\G$. Then $(I,J)$ and $(I',J')$ are in the same connected component of $\G$ if and only if
\[ I\multicup J = I' \multicup J'\qquad\text{and}\qquad\MCP(I, J) = \MCP(I', J'). \]
\end{theorem*}

For the second question, where we force the exchanges to alternate, the same result holds (up to some isolated vertices, cf. Theorem~\ref{5:thm:exchange_graph}).

\subsection{Structure of the paper}

Section~\ref{5:sec:duality} is devoted to the generalization in an abstract setting, and to the important duality theorem (Theorem~\ref{5:thm:duality}) linking Symanzik polynomials to the well-known Kirchhoff theorem. In the third section we develop applications of Symanzik polynomials to geometry. In Section~\ref{5:sec:deletion_contraction} we generalize some other properties of these polynomials thanks to matroids over hyperfields. The two last sections follow Sections 2 and 3 of~\cite{Ami19} and generalize the results of this paper to our setting. Section~\ref{5:sec:exchange_graph} describes a combinatorial result about what we call the exchange graph of a matroid, which is needed for the last section. In this last section, we state and prove the stability theorem.

\subsection*{Acknowledgements}

We whish to express our thanks to our Ph.D.\ supervisor Omid Amini for suggesting the problem, for his many advices and for careful and patient reading and correcting of the paper. We gratefully acknowledge the many references given by Adrien Kassel, Vincent Rivasseau and Alan Sokal, and the helpful suggestions of Spencer Bloch and Razvan Gurau.

\section{Kirchhoff and Symanzik polynomials and duality}
\label{5:sec:duality}

\subsection{Preliminaries}
\label{5:subsec:notations}

We begin this section setting up some more tools in order to deal with Symanzik polynomials of higher orders. Note that some important notations and conventions, which we will use all through the paper, have already been introduced in Section~\ref{5:subsec:intro:generalization}.

In the whole article, $k$ will always be any \emph{nonnegative even} integer.

Let $p$ be a positive integer. If $k$ is nonzero, let $a_1=(a_{1,1}, \dots, a_{1,p}), \dots, a_k$ be $k$ vectors of $\R^p$. We define the \emph{standard $k$-multilinear symmetric product} to be
\[ (a_1, \dots, a_k)_k:=\sum_{i=1}^p\prod_{j=1}^ka_{j,i}. \]
If $y$ is a family of $p$ variables, then we define the \emph{$k$-multilinear symmetric product associated to $y$} by
\[ (a_1, \dots, a_k)_{k,y}:=\sum_{i=1}^p\bigg(\prod_{j=1}^ka_{j,i}\bigg)y_i. \]
We will omit the index $k$ when the number of variables is clear.

Let us define what we call matrix of higher order, and some basic operations on it. Hyperdeterminants were first discovered by Arthur Cayley in 1843 (see~\cite{Cay1849}).

Let $\A$ be any commutative ring.
A \emph{matrix of order $k$ and size $(n_1,\dots, n_k)$ on $\A$}, where $k$ is a positive integer, and where the \emph{size} is a $k$-tuple of positive integers, is a family of elements of $\A$ indexed by $\zint1{n_1}\times\dots\times\zint1{n_k}$. We can naturally sum two matrices of the same size by summing corresponding entries.

Let $C=(c_{i_1,\dots,i_k})_{(i_1,\dots,i_k)\in\zint1{n_1}\times\dots\times\zint1{n_k}}$ be such a matrix. Let $l\in\zint1k$ and $P=(p_{i, j})\in\M_{n_l,m}(\A)$ be a usual matrix. Then the \emph{(right) multiplication of $C$ by $P$ along the $l$-th direction}, denoted by $C\mmult lP$, is the $k$-dimensional matrix $B$ of size
\[ \size(B)=(n_1, \dots, n_{l-1}, m, n_{l+1}, \dots, n_k), \]
verifying,
\[ b_{i_1,\dots,i_k}=\sum_{a=1}^{n_l} c_{i_1,\dots,a,\dots,i_k}\cdot p_{a,i_l}, \]
where $a$ is the $l$-th index of $c$.

Let $\mhyp_n^k(\A)$ denote the set of \emph{hypercubic matrices of order $k$ and of size $n$}, \ie, of matrices having size $(\underbrace{n, \dots, n}_{\text{$k$ times}})$. Let $C\in\mhyp_n^k(\A)$. We define the \emph{hyperdeterminant of $C$} by
\vspace{-1.5em}
\[ \det(C):=\frac1{n!}\sum_{\tau_1, \dots, \tau_k\in\perm n}\prod_{j=1}^k\sgn(\tau_j)\prod_{a=1}^n c_{\tau_1(a),\dots,\tau_k(a)}, \]
where $\perm n$ is the set of permutations on $\zint1n$, and $\sgn(\tau)$ denotes the signature of the permutation $\tau$.
Notice that this definition coincides with the usual one for $k=2$. Moreover, the determinant would be $0$ if $k$ was odd. Let $D$ be the diagonal hypermatrix $D:=\diag^k(a_1, \dots, a_n)$ defined by
\[ D_{i_1, \dots, i_k}=\begin{cases}
a_{i_1} & \text{if $i_1=\dots=i_k$}, \\
0 & \text{otherwise}.
\end{cases} \]
Then $\det(D)=a_1\cdots a_n$. The determinant is multiplicative: if $l\in\zint1k$ and if $P\in\M_n(\A)$, then
\[ \det(C\mmult lP)=\det(C)\det(P). \]

\medskip

Let $u^1, \dots, u^k$ be $k$ families of vectors in $\R^p$ of respective size $l_1, \dots, l_k$. If $(\wcdot,\dots,\wcdot)_*$ is any $k$-multilinear symmetric product from $(\R^p)^k$ to a commutative ring, we can extend it to $\Ext\R^p$ by
\[ (\mat u^1, \dots, \mat u^k)_*=\begin{cases}
  \det((u^1_{i_1},\dots,u^k_{i_k})_*)_{1\leq i_1,\dots,i_k\leq l_1} & \text{if $l_1=\dots=l_k$,} \\
  0 & \text{otherwise.}
\end{cases} \]
One can replace $\R^p$ by any real vector space. We allow ourselves to use standard terminologies and notations of inner products in this case. For instance, $\norm{\mat u}_*^k$ denoted $(\mat u, \dots, \mat u)_*$.

For instance, if $l_1=\dots=l_k=n$, then
\[ (\mat u^1, \mat u^2)_x=\det((U^1)^\tran X(U^2)), \]
where $X=\diag(x_1, \dots, x_n)$, and,
\[ (\mat u^1, \dots, \mat u^k)_x=\det(X\mmult 1U^1\cdots\mmult kU^k), \]
where $X$ is the diagonal hypercubic matrix $\diag^k(x_1, \dots, x_n)$ (and $U^i$ is the matrix associated to $u^i$).

\bigskip

We state a simple but very useful lemma.

\begin{lemma} \label{5:lem:u=fv}
Let $r, p, n$ be three positive integers and $U\in\M_{p,n}(\Z)$ be a matrix of rank $r$. Let $F\in\M_{p,r}(\Z)$. Assume that $\Zmod u\subset \Zmod f$. Then, there exists a unique $V\in\M_{r,n}(\Z)$ such that $U=FV$. Moreover, we have the following points.
\begin{enumerate}
\item \label{5:lem:u=fv_1} The following properties are equivalent.
  \begin{itemize}
  \item $f$ is a basis of $\Zmod{u}$.
  \item $v^\tran$ is a basis of $\Zvect{u^\tran}$.
  \end{itemize}
  Furthermore, if they hold, $v$ generates $\Z^r$ and $\norm{\mat u_I}=\norm{\Zmod u}\cdot\norm{\mat v_I}$ for any subset $I$ of $\zint1n$ of size $r$.
\item The following properties are equivalent.
  \begin{itemize}
  \item $f$ is a basis of $\Zvect u$.
  \item $v^\tran$ is a basis of $\Zmod{u^\tran}$.
  \end{itemize}
  Furthermore, if they hold, $\norm{\mat u_I}=\norm{\Zvect u}\cdot\norm{\mat v_I}$ for any subset $I$ of $\zint1n$ of size $r$.
\end{enumerate}
\end{lemma}

\begin{proof}
The first part of the lemma is clear. Notice that columns of $F$, \resp rows of $V$, have to be free. Let us prove Point~\eqref{5:lem:u=fv_1}.

If $f$ is a basis of $\Zmod{u}$, then there exists $V'\in\M_{n,r}(\Z)$ such that $F=UV'=FVV'$. Since columns of $F$ are free, $VV'=\Id_r$. Thus, $v$ generates $\Z^r$. Let $w$ be such that $w^\tran$ is a basis of $\Zvect{u^\tran}$ (which equals $\Zvect{v^\tran}$). This last space contains $v^\tran$. Therefore, there exists a square matrix $G$ such that $V=GW$. Then,
\[ 1=\det(VV')=\det(GWV')=\det(G)\det(WV'), \]
which implies that $G$ is invertible, and then that $v^\tran$ is in fact a basis of $\Zvect{v^\tran}$.

Let us prove the converse. If $v^\tran$ is a basis of $\Zvect{u^\tran}$, then $v^\tran$ can be completed into a basis ${\~v}^\tran$ of $\Z^n$. Set $V'=\big({\~V}^{-1}\big)_
{\zint1r}$. As $UV'=FVV'=F$, $\Zmod{f}\subset\Zmod{u}$. The other inclusion follows from the existence of $V$. Thus, $f$ is in fact a basis of $\Zmod u$.

Moreover,
\[ \norm{\mat u_I}^2=\det(U_I^\tran U_I)=\det(V_I^\tran F^\tran FV_I). \]
But $V_I$ and $F^\tran F$ are square matrices. Thus, since $f$ is a basis of $\Zmod u$,
\[ \det(V_I^\tran F^\tran FV_I)=\det(V_I)^2\det(F^\tran F)=\norm{\mat v_I}^2\norm{\Zmod u}^2. \]

The second point can be proven in a similar way.
\end{proof}

\smallskip

Some specific notations will be introduced later but we can already deal with the heart of the subject.

\subsection{Kirchhoff polynomials}

In this article, Kirchhoff polynomials are a generalization of polynomials appearing in the weighted Kirchhoff's matrix tree theorem, whereas Symanzik polynomials generalize first and second Symanzik polynomials better known in Physics (see Examples~\ref{5:ex:Kirchhoff_s_thm_1},~\ref{5:ex:first_Symanzik_polynomial} and~\ref{5:ex:second_Symanzik_polynomial}, Theorem~\ref{5:thm:Kirchhoff_s} and the introduction for more details). They are dual in a very precise way as we will show at the end of this subsection, \cf Theorem~\ref{5:thm:duality}.

We will fix some objects for the rest of the section. Let $p, n$ be two positive integers, $u$ be a family of $n$ elements in $\Z^p$, and $r:=\rk(u)$ be the rank of this family. Recall that $k$ is an even nonnegative integer.

\begin{definition} \label{5:defi:Kirchhoff_polynomials}
The \emph{Kirchhoff polynomial of order $k$ of $u$} is defined by
\[ \Kir_k(u;x):=\frac1{{\norm{\Zvect{u}}}^k}\sum_{\substack{ I\subset\zint1n \\ \card I=r}}\norm{\mat u_I}^k x^I. \qedhere \]
\end{definition}

\begin{remark}
\label{5:rem:another_definition}
Another definition would be obtained replacing $\norm{\Zvect{u}}$ by $\norm{\Zmod{u}}$. A priori, no definition is better than the other. We choose our definition in order to get determinantal formula without any factor (see Propositions~\ref{5:prop:determinantal_formula} and~\ref{5:prop:m_determinantal_formula}). As a counterpart, factors appear in the duality theorem~\ref{5:thm:duality}. This choice is also justified by Remark~\ref{5:rem:arithmetic_matroid}.
\end{remark}

Let $I$ be a subset of $\zint1n$ of size $r$, let $v$ be such that $v^\tran$ is a basis of $\Zmod{u^\tran}$, and let $F\in\M_{p,r}(\Z)$ be the unique matrix such that $U=FV$. By Lemma~\ref{5:lem:u=fv}, $f$ is a basis of $\Zvect{u}$. If $\mat u_I$ is nonzero, then $\Zmod{u_I}$ is a submodule of $\Zvect{u}$ of maximal rank, and we therefore have all the following equalities:
\begin{equation}
\label{5:eqn:norm_to_det}
\frac{\norm{\mat u_I}}{\norm{\Zvect{u}}}=\norm{\mat v_I}=\abs{\det(V_I)}=\card{\rquot{\Zvect{u}}{\Zmod{u_I}}}=\abs{(\mat v^\tran, \mat e_I)},
\end{equation}
where $e$ is the standard basis of $\Z^n$.
In particular, the coefficients of $\Kir_k$ are in $\Z$.

Moreover, $v^\tran$ could be any basis of $\Zmod{u^\tran}$. But Equation~\eqref{5:eqn:norm_to_det} gives a formula of the coefficients depending on $v$ alone. Thus,
\begin{equation}
\label{5:eqn:Kir_depends}
\text{$\Kir_k(u;x)$ only depends on $\Zmod{u^\tran}$.}
\end{equation}

\begin{remark}
\label{5:rem:PID}
Using $\det(V_I)$ instead of $\norm{\mat u_I}/\norm{\Zvect u}$ in Definition~\ref{5:defi:Kirchhoff_polynomials}, it is possible to define Kirchhoff polynomials for $k$ odd, or even for $u$ a family of vectors over $A^n$ where $A$ is any PID. However, one has to be careful because the $k$-th power of an element of $A^*$ could be different from $1$. The polynomial depends on the chosen $v$ (up to a $k$-th power of an invertible of $A^*$). Moreover, one has to add some artificial signs to the Symanzik polynomial (see Definition~\ref{5:defi:Symanzik_polynomials}) in order to make Theorem~\ref{5:thm:duality} about the duality true. For more details, we refer to~\cite{Piq19-master-thesis}.
\end{remark}

\begin{remark}
\label{5:rem:arithmetic_matroid}
One can also use $\card{\rquot{\Zvect{u}}{\Zmod{u_I}}}$ instead of $\norm{\mat u_I}/\norm{\Zvect u}$ in Definition~\ref{5:defi:Kirchhoff_polynomials}. Then the definition makes sense for any order $k$. As a counterpart, determinantal formul\ae\ as in Proposition~\ref{5:prop:determinantal_formula} are not true anymore. However, for $k=1$, we recover the main example of \emph{arithmetic matroid} for the group $G=\Z^n$: see~\cite{DAM11}*{Subsection 2.4}.
\end{remark}

Kirchhoff polynomials of order $2$ have a more computable definition under the condition $r=p$.

\begin{proposition} \label{5:prop:determinantal_formula}
If $r=p$ (or equivalently if $u^\tran$ is free), then
\[ \Kir_2(u;x)=\norm{\mat u^\tran}_x^2=\det(UXU^\tran), \]
where $X\in\M_n(\Z[x])$ is the diagonal matrix $\diag(x_1,\dots,x_n)$.
\end{proposition}

\begin{proof}
The second equality comes directly from the definitions. Let $e$ be the standard basis of $\R^n$. As the standard orthonormal basis of $\Ext^r\R^n$ (for the standard inner product) is an orthogonal basis for $(\wcdot, \wcdot)_x$, we obtain,
\begin{align*}
\norm{\mat u^\tran}_x^2
  &= \sum_{\substack{I\subset\zint1n \\ \card I=r}}(\mat u^\tran,\mat e_I)^2\norm{\mat e_I}_x^2 \\
  &= \sum_{\substack{I\subset\zint1n \\ \card I=r}}\norm{\mat u_I}^2x^I.
\end{align*}

It remains to check that $\norm{\Zvect{u}}=1$, but this is true because $\Zvect{u}=\Z^p$.
\end{proof}

For orders larger than 2, one can obtain a similar formula.

\begin{proposition} \label{5:prop:m_determinantal_formula}
If $r=p$ (or equivalently if $u^\tran$ is free), then
\[ \Kir_k(u;x)=\norm{\mat u^\tran}_{k,x}^k=\det\big(X\mmult1U^\tran\mmult2\cdots\mmult kU^\tran\big), \]
where $X$ is the diagonal hypercubic matrix $\diag^k(x_1,\dots,x_n)$.
\end{proposition}

\begin{proof}
The same proof works if one replaces the computation by
\begin{align*}
\norm{\mat u^\tran}_{k,x}^k
  &= \sum_{\substack{I\subset\zint1n \\ \card I=r}}(\mat u^\tran,\mat e_I)^k\norm{\mat e_I}_{k,x}^k \\
  &= \sum_{\substack{I\subset\zint1n \\ \card I=r}}\norm{\mat u_I}^kx^I. \qedhere
\end{align*}
\end{proof}

\begin{example} \label{5:ex:practical_computation}
How to compute the Kirchhoff polynomial of any family $u$ with the determinantal formula? Take $v$ a family such that $v^\tran$ is a basis of $\Zmod{u^\tran}$. Let $F$ be the unique matrix such that $U=FV$ (it is not necessary to compute $f$). By Lemma~\ref{5:lem:u=fv}, for any subset $I\subset\zint1n$ of size $r$,
\[ \norm{\mat u_I}=\norm{\mat v_I}\norm{\Zvect u}. \]
As $\Zvect v=\Z^r$, one can rewrite last equation as
\[ \frac{\norm{\mat u_I}}{\norm{\Zvect u}}=\frac{\norm{\mat v_I}}{\norm{\Zvect v}}. \]
Thus,
\[ \Kir_k(u; x)=\Kir_k(v; x)=\norm{\mat v^\tran}_{k,x}^k. \]
Note that, if instead of taking a basis of $\Zmod{u^\tran}$, one takes a basis of $\Zvect{u^\tran}$, you will divide all coefficients by a common divisor equal to
\[ \bigg(\frac{\norm{\Zmod{u}}}{\norm{\Zvect{u}}}\bigg)^k. \]
We will see a meaning of this factor in Remark~\ref{5:rem:thm:Kirchhoff_s}.
\end{example}

\begin{example} \label{5:ex:Kirchhoff_s_thm_1}
Let us now explain more precisely the link between Kirchhoff as mathematician and Kirchhoff polynomials. Let $G=(V,E)$ be a connected graph with vertex set $V$ of size $p$ and edge set $E$ of size $n$. Suppose that vertices are labelled from $1$ to $p$ and edges from $1$ to $n$. By abuse of notation, the same letter will denote a vertex, \resp an edge, and its label. Similarly, a set of edges could denote a set of numbers. Let $Q=(q_{v,e})\in\M_{p,n}(\Z)$ be an incidence matrix of $G$, \ie, put an orientation on edges of $G$ and set, for each $v\in V$ and each $e\in E$,
\[ q_{v,e}:=\left\{ \begin{array}{ll}
0\quad&\text{if $e$ is a loop,} \\
1\quad&\text{if $v$ is the head of $e$,} \\
-1\quad&\text{if $v$ is the tail of $e$,} \\
0\quad&\text{if $v$ and $e$ are not incident.}
\end{array} \right. \]
Let $i\in\zint1p$ be any number, and $U$ be the matrix $Q$ where we deleted the $i$-th row. The well-known Kirchhoff's matrix-tree theorem, in its weighted form, states that
\[ \det(UXU^\tran)=\sum_{I\in\trees}x^I, \]
where $X=\diag(x_1,\dots,x_n)$, and $\trees\subset\P(E)$ is the family of all subsets $I\subset E$ that verify that the spanning subgraph of $G$ with edge set $I$ is a spanning tree of $G$. Then Proposition~\ref{5:prop:determinantal_formula} implies that
\[ \Kir_2(u;x)=\sum_{I\in\trees}x^I. \]

In fact, in this very special case, \( \Kir_k(u;x) \) does not depend on $k$. Moreover one can see that $\Zmod{q^\tran}=\Zmod{u^\tran}$. Thus, by~\eqref{5:eqn:Kir_depends}, we even have
\begin{equation} \label{5:eqn:Kirchhoff_s_thm_1}
\Kir_k(q;x)=\sum_{I\in\trees}x^I.
\end{equation}
We will see later, in Theorem~\ref{5:thm:Kirchhoff_s}, a generalization of Kirchhoff's theorem to the case of finite simplicial complexes.
\end{example}

\subsection{Symanzik polynomials}

Symanzik polynomials have a similar definition, but note the complement in the last exponent.

\begin{definition} \label{5:defi:Symanzik_polynomials}
The \emph{Symanzik polynomial of order $k$ of $u$} is defined by
\[ \Sym_k(u;x):=\frac1{{\norm{\Zvect{u}}}^k}\sum_{\substack{ I\subset\zint1n \\ \card I=r}}\norm{\mat u_I}^k x^{\compl I}. \qedhere \]
\end{definition}

\begin{remark} \label{5:rem:Kirchhoff_and_Symanzik}
Kirchhoff and Symanzik polynomials are so similar that one can easily define ones from the others thanks to following formul\ae.
\begin{gather*}
\Sym_k(u;x)=x_1\cdots x_n\Kir_k(u;(x_1^{-1}, \dots,x_n^{-1})), \\
\Kir_k(u;x)=x_1\cdots x_n\Sym_k(u;(x_1^{-1}, \dots,x_n^{-1})). \qedhere
\end{gather*}
\end{remark}

\begin{example} \label{5:ex:first_Symanzik_polynomial}
What is the link between Symanzik polynomials of Definition~\ref{5:defi:Symanzik_polynomials} and the first Symanzik polynomial of the introduction (Equation~\eqref{5:eqn:first_Symanzik_polynomial})? Let $G$ be a graph as defined in Example~\ref{5:ex:Kirchhoff_s_thm_1}. Set $k=2$. We have seen in Example~\ref{5:ex:Kirchhoff_s_thm_1} that, using the same notations,
\[ \Kir_k(q;x)=\sum_{I\in\trees}x^I. \]
Using Remark~\ref{5:rem:Kirchhoff_and_Symanzik}, we obtain
\begin{align*}
\Sym_k(q;x) &= x_1\cdots x_n\Kir_k(q;(x_1^{-1}, \dots, x_n^{-1})) \\
 &= \sum_{I\in\trees}x^{\compl I},
\end{align*}
which is exactly the first Symanzik polynomial.
\end{example}

Symanzik polynomials also have a determinantal formula, \cf Proposition~\ref{5:prop:Symanzik_determinantal_formula}. As we will show, this formula will be a direct consequence of the duality theorem~\ref{5:thm:duality}. Therefore, we postpone the statement and the proof of the formula to the end of next subsection.

\subsection{Duality Theorem}

We can now state the duality theorem.

\begin{theorem}[Duality] \label{5:thm:duality}
Let $q$ be a positive integer and $v$ be a family of $n$ vectors in $\Z^q$ such $v^\tran$ spans $\ker(U)$. Then
\begin{equation} \label{5:eqn:duality_1}
\frac1{a^k}\Sym_k(u;x)=\frac1{b^k}\Kir_k(v;x),
\end{equation}
where
\begin{align*}
  a &:= \frac{\norm{\Zmod{u}}}{\norm{\Zvect{u}}} = \card{\rquot{\Zvect{u}}{\Zmod{u}}}\text{ and} \\
  b &:= \frac{\norm{\Zmod{v}}}{\norm{\Zvect{v}}} = \card{\rquot{\Zvect{v}}{\Zmod{v}}}.
\end{align*}
\end{theorem}

Note that $\ker(U)$ denotes a vector subspace of $\R^n$, and that $\ker(U)=\vect{u^\tran}^\perp$.

\begin{proof}
Set $\~u$ a family of vectors such that ${\~u}^\tran$ is a basis of $\Zvect{u^\tran}$. By Lemma~\ref{5:lem:u=fv}, one can find a matrix $U'\in\M_{r,n}(\Z)$ such that $\~UU^{\prime\tran}=\Id_r$. Similarly, set $\~v$ such that ${\~v}^\tran$ is a basis of $\Zvect{v^\tran}$, and $V'\in\M_{n-r,n}(\Z)$ such that $\~VV^{\prime\tran}=\Id_{n-r}$.

Let $I$ be a subset of $\zint1n$ of size $r$. The coefficient of $x^{\compl I}$ in the left-hand side of~\eqref{5:eqn:duality_1} is
\[ \Big(\frac{\norm{\Zvect{u}}}{\norm{\Zmod{u}}}\frac{\norm{\mat u_I}}{\norm{\Zvect{u}}}\Big)^k. \]
Therefore, by Lemma~\ref{5:lem:u=fv}, this coefficient is equal to
$\norm{\mat{\~u}_I}^k$. Applying the same argument on the right-hand side, it
remains to prove that
\[ \norm{\mat{\~u}_I}=\norm{\mat{\~v}_{\compl I}}. \]

We will deduce this last equality from
\begin{equation} \label{5:eqn:thm:duality_1}
\pbmatrix{\~U_I}{\~U_{\compl I}}{V'_I}{V'_{\compl I}}
\pbmatrix{\Id_r}{\~V_I^\tran}{0}{\~V_{\compl I}^\tran}=\pbmatrix{\~U_I}{0}{*}{\Id_s},
\end{equation}
where $s:=n-r$, and from
\[ \pbmatrix{U'_I}{U'_{\compl I}}{\~V_I}{\~V_{\compl I}}
\pbmatrix{\~U_I^\tran}{0}{\~U_{\compl I}^\tran}{\Id_s}=
\pbmatrix{\Id_r}{*}{0}{\~V_{\compl I}}. \]
Indeed, this implies
\begin{gather*}
\norm{\mat{\~u}^\tran\wedge\mat{v'}^\tran}\cdot\norm{\mat{\~v}_{\compl I}}=\norm{\mat{\~u}_I}, \\
\norm{\mat{u'}^\tran\wedge\mat{\~v}^\tran}\cdot\norm{\mat{\~u}_I}=\norm{\mat{\~v}_{\compl I}}.
\end{gather*}
Since all these factors are nonnegative integers. We must have
\[ \norm{\mat{\~u}_I}=\norm{\mat{\~v}_{\compl I}}, \]
which concludes the proof.
\end{proof}

\begin{remark}
In the proof above, the equality of coefficients implies that
\[ \frac{\norm{\mat u_I}}{\norm{\Zmod{u}}} = \frac{\norm{\mat v_{\compl I}}}{\norm{\Zmod{v}}}. \]
In fact, one can prove something stronger, namely,
\[ \rquot{\Zmod{u}}{\Zmod{u_I}} \simeq \rquot{\Zmod{v}}{\Zmod{v_{\compl I}}}. \qedhere \]
\end{remark}

We need a lemma for the proof of Theorem~\ref{5:thm:variation}. We state this lemma here because its proof is very similar to that of Theorem~\ref{5:thm:duality}. If $I$ is a subset of $\zint1n$ and if $\underline I$ is an ordering of the elements of $I$, then we write $\epsilon(\underline I):=(\mat e_{\underline I}, \mat e_I)$, \ie, $(-1)^\iota$ where $\iota$ is the number of inversions in $\underline I$.

\begin{lemma} \label{5:lem:duality_with_signs}
Let $p,m$ be two positive integers. Let $\~u$ and $\~v$ be such that $\~u^\tran$ and $\~v^\tran$ are two free families of vectors in $\R^m$ of respective size $p$ and $m-p$. Assume that $\~U{\~V}^\tran=0$. Let $I$ be a subset of $\zint1m$ of size $p$. Then
\[ \det(\~U_I)=\lambda\,\epsilon(I\concat\compl I)\det(\~V_{\compl I}), \]
where $\lambda\in\R^*$ does not depend on $I$.
\end{lemma}

\begin{proof}
We define $V'\in\M_{m-p,m}(\R)$ as in the proof of Theorem~\ref{5:thm:duality}, \ie, such that $\~V{V'}^\tran=\Id_{m-p}$. We still get Equation~\eqref{5:eqn:thm:duality_1}. By reordering columns and by transposing, this equation leads to
\[ \epsilon(I\concat\compl{I})\det(\~U^\tran\concat V'^\tran)\det(\~V_{\compl I})=\det(\~U_I). \]
Then set $\lambda=\det(\~U^\tran\concat V'^\tran)$ to conclude the proof ($\lambda$ cannot be $0$ since there exists an $I$ such that $\det(\~U_I)\neq0$).
\end{proof}

Let us now extend determinantal formul\ae{} to Symanzik polynomials. From the determinantal formula for Kirchhoff polynomials (Propositions~\ref{5:prop:determinantal_formula} and~\ref{5:prop:m_determinantal_formula}) and from the duality theorem (Theorem~\ref{5:thm:duality}), we immediately obtain the following formula.

\begin{proposition} \label{5:prop:Symanzik_determinantal_formula}
If $v$ is a family of vectors in $\Z^{n-r}$ such that $v^\tran$ is a basis of the vector subspace $\ker(U)$, then
\[ \Sym_k(u;x)=\Big(\frac{a}{b}\Big)^k\norm{\mat v^\tran}_{k,x}^k \]
where $a=\norm{\Zmod u}/\norm{\Zvect u}$ and $b=\norm{\Zmod v}/\norm{\Zvect v}$ are defined as in Theorem~\ref{5:thm:duality}.
\end{proposition}

\subsection{Symanzik polynomials with parameters}

We now wish to generalize the second Symanzik polynomials defined in the introduction (Equation~\eqref{5:eqn:second_Symanzik_polynomial}). Actually, one can naturally add more than one parameter.

\begin{definition} \label{5:defi:Symanzik_polynomials_with_parameters}
Let $l$ be a nonnegative integer and let $w=(w_1, \dots, w_l)$ be a family of elements in $\vect u$. The \emph{Symanzik polynomial of order $k$ of $u$ with parameters $w_1, \dots, w_l$} is defined by
\[ \Sym_k(u;w;x):=\frac1{{\norm{\Zvect{u}}}^k}\sum_{\substack{ I\subset\zint1n \\ \card I=r-l}}\norm{\mat u_I\wedge\mat w}^k x^{\compl I}. \qedhere \]
\end{definition}

\begin{remark}
In the case $w_1, \dots, w_l\in\Zmod u$, one simply has
\[ \Sym_k(u;w;x)=\Sym_k(u\concat w;x\concat(\underbrace{0,\dots,0}_{\text{$l$ times}})). \qedhere \]
\end{remark}

Once more, there is a determinantal formula.

\begin{proposition}
\label{5:prop:Symanzik_with_parameters_determinantal_formula}
Let $l$ be a nonnegative integer and $w$ be a family of $l$ elements in $\vect u$. For $i\in\zint1l$, let $\~w_i\in\R^n$ be such that $w_i=U\~w_i$. Let $v$ be a family of vectors in $\Z^{n-r}$ such that $v^\tran$ is a basis of the vector subspace $\ker(U)$. Then,
\[ \Sym_k(u; w;x)=\Big(\frac{a}{b}\Big)^k\norm{\mat v^\tran\wedge\mat{\~w}}_{k,x}^k \]
where $a=\norm{\Zmod u}/\norm{\Zvect u}$ and $b=\norm{\Zmod v}/\norm{\Zvect v}$ are defined as in Theorem~\ref{5:thm:duality}.
\end{proposition}

\begin{proof}
By the multilinearity of the determinant and by a density argument, we can restrict our study to the case $w_1, \dots, w_l\in\Zmod u$. Applying Proposition~\ref{5:prop:Symanzik_determinantal_formula} to $\Sym_k(u\concat w;x\concat(0,\dots,0))$, one obtains that
\[ \Sym_k(u; w;x)=\Big(\frac{a'}{b'}\Big)^k\norm{\mat v^{\prime\tran}}_{k,x'}^k\;, \]
where
\begin{gather*}
V':={\pbmatrixIV{V^\tran}{\~W}{0}{\scalebox{0.75}[1.0]{$-$}\!\Id_l}}^\tran, \\
x'=(x_1, \dots, x_n, 0, \dots, 0), \\
a'=\frac{\norm{\Zmod{(u\concat w)}}}{\norm{\Zvect{u\concat w}}} \text{ and} \\
b'=\frac{\norm{\Zmod{v'}}}{\norm{\Zvect{v'}}}.
\end{gather*}
It is not difficult to see that $a'=a$, $b'=b$ and $\norm{\mat v^{\prime\tran}}_{k,x'}^k=\norm{\mat v^\tran\wedge\~{\mat w}}_{k,x}^k$.
\end{proof}

We now give another form of Symanzik polynomials which will allow us to explain the link between Definition~\ref{5:defi:Symanzik_polynomials_with_parameters} and the second Symanzik polynomial of the introduction (Equation~\eqref{5:eqn:second_Symanzik_polynomial}). We need to define the following orientation. Choose a volume form $\omega$ on $\vect u$ and, for two subsets $I$ and $J$ of respective sizes $r-l$ and $l$, set $\epsilon_I(J):=\sign(\omega(\mat u_{I\concat J}))\in\{-1,0,1\}$. Depending on $\omega$, we will get an orientation or its opposite.

\begin{proposition}
\label{5:prop:Symanzik_and_orientations}
Let $l$ be a nonnegative integer and $w$ be a family of $l$ elements in $\vect u$.
Then,
\[ \Sym_k(u; w; x)=\frac1{{\norm{\Zvect{u}}}^k}\sum_{\substack{I\subset\zint1n \\ \card I=r-l}} \Big(\sum_{\substack{J\subset\compl I \\ \card J=l}}\epsilon_I(J)\norm{\mat u_{I\cup J}}(\mat{\~w}, \mat e_J)\Big)^kx^{\compl I}, \]
where $e$ is the standard basis of $\R^n$ and $\~w$ is defined as in Proposition~\ref{5:prop:Symanzik_with_parameters_determinantal_formula}.
\end{proposition}

Note that, since $k$ is even, the right-hand side does not depend on $\omega$.

\begin{proof}
Without loss of generality, assume that members of $w$ are in $\Zmod u$. Let $f$ be a basis of $\Zmod u=\Zmod(u\concat w)$ such that $\omega(\mat f)>0$. Let $V\in\M_{r,n}(\Z)$ be such that $U=FV$. Then, $\epsilon_I(J)=\sign(\det(V_{I\concat J}))$. Let $T=V\~W$. Lemma~\ref{5:lem:u=fv} shows that, for $I\subset\zint1n$ of size $r-l$, we have
\begin{equation} \label{5:eqn:prop:Symanzik_and_orientations}
\norm{\mat u_I\wedge\mat w}^k=\norm{\Zmod u}^k\norm{\mat v_I\wedge\mat t}^k=\norm{\Zmod u}^k(\mat v^\tran, \mat e_I\wedge\mat{\~w})^k.
\end{equation}
Then,
\begin{align*}
(\mat v^\tran, \mat e_I\wedge\mat{\~w})
 &= \bigg(\mat v^\tran, \mat e_I\wedge\Big(\sum_{\substack{J\subset\zint1n \\ \card J=l}} (\mat{\~w}, \mat e_J)\mat e_J\Big)\bigg) \\
 &= \sum_{\substack{J\subset\compl I \\ \card J=l}} (\mat v^\tran, \mat e_I\wedge\mat e_J)(\mat{\~w}, \mat e_J) \\
 &= \sum_{\substack{J\subset\compl I \\ \card J=l}} \epsilon_I(J)\norm{\mat v_{I\cup J}}(\mat{\~w}, \mat e_J).
\end{align*}
Hence, by~\eqref{5:eqn:prop:Symanzik_and_orientations},
\[ \norm{\mat u_I\wedge\mat w}^k=\norm{\Zmod u}^k\bigg(\sum_{\substack{J\subset\compl I \\ \card J=l}} \epsilon_I(J)\norm{\mat v_{I\cup J}}(\mat{\~w}, \mat e_J)\bigg)^k. \]
By Lemma~\ref{5:lem:u=fv}, $\norm{\Zmod u}\norm{\mat v_{I\cup J}}=\norm{\mat u_{I\cup J}}$. We conclude the proof using Definition~\ref{5:defi:Symanzik_polynomials_with_parameters}.
\end{proof}

\begin{example} \label{5:ex:second_Symanzik_polynomial}
In this example, we will explain the link between the second Symanzik polynomial from the introduction and Symanzik polynomials with one parameter introduced above. More precisely, we wish to show that
\begin{equation} \label{5:eqn:ex:second_Symanzik_polynomial}
\phi_G(\mom, x)=\Sym_2(q; (\mom); x),
\end{equation}
where we recall notations below.

Let $G=(V,E)$ be a connected graph as in Example~\ref{5:ex:Kirchhoff_s_thm_1}, and $Q\in\M_{p,n}(\Z)$ be an incidence matrix of $G$. Take $\mom$ a vector in $\R^p$ such that $\sum_{v\in V}\mom_v=0$ (this is a slightly less general case compared to the definition in the introduction since we take $D=1$). The second Symanzik polynomial, denoted by $\phi_G$, is defined by
\begin{equation*}
\phi_G(\mom, x):=\sum_{F\in\SF_2}-\mom_{F_1}\mom_{F_2}\prod_{e\not\in F}x_e,
\end{equation*}
with the following notations. The set $\SF_2$ denotes the set of spanning forests of $G$ that have two connected components. For $F\in\SF_2$, $F_1$ and $F_2$ denotes the two connected components of $F$. Then, $\mom_{F_i}:=\sum_{v\in F_i}\mom_v$ for $i\in\{1,2\}$. Since the sum of $\mom_v$ is zero, one can replace $-\mom_{F_1}\mom_{F_2}$ by $\mom_{F_1}^2$.

Since $G$ is a connected graph, $\sum_{v\in V}\mom_v=0$ is equivalent to $\mom\in\vect q$. Now, we compare $\phi_G(\mom, x)$ with the Symanzik polynomial $\Sym_2(q; (\mom); x)$. By Proposition~\ref{5:prop:Symanzik_and_orientations}, with similar notations,
\begin{equation*}
\Sym_2(q; (\mom); x)=\frac1{\norm{\Zvect q}^2}\sum_{\substack{I\subset\zint1n \\ \card I=r-1}} \Big(\sum_{\substack{j\in\compl I}}\epsilon_I(j)\norm{\mat q_{I+j}}\~\mom_j\Big)^2x^{\compl I},
\end{equation*}
where we write $\epsilon_I(j)$ instead of $\epsilon_I((j))$. By~\eqref{5:eqn:Kirchhoff_s_thm_1}, the ratio $\norm{\mat q_{I+j}}^2/\norm{\Zvect q}^2$ equals $1$ if $I+j$ forms a spanning tree of $G$, and $0$ otherwise. Thus, we can restrict the sum to the sets $I$ that correspond to $2$-forests of $G$. It remains to show that, if $I\subset\zint1n$ corresponds to $\F\in\SF_2$, then
\[ \Big\lvert\sum_{\substack{j\in\compl I}}\epsilon_I(j)\~\mom_j\Big\rvert=\abs{\mom_{F_1}}. \]
Let us try to better understand what happens here. On each edge $e$, we put a weight $\~\mom_e$ such that, for each vertex $v$, $\mom_v$ equals the algebraic sum of the weight of the edges incident to $v$:
\[ \mom_v = \sum_{e\in E}q_{v,e}\~\mom_e. \]
Thus, when we sum $\mom_v$ over all vertices $v$ of $F_1$, each edge in $F_1$ appears two times with opposite signs and each edge in $F_2$ does not appear in the sum. Thus, it only remains edges joining $F_1$ and $F_2$:
\[ \mom_{F_1}=\sum_{e\in E(F_1, F_2)}\epsilon_{F_2,F_1}(e)\~\mom_e, \]
where $E(F_1, F_2)$ is the set of edges joining $F_1$ and $F_2$, and $\epsilon_{F_2,F_1}(e)$ equals $+1$ if $e$ is oriented from $F_2$ to $F_1$, and equals $-1$ otherwise. Let us now see how $\epsilon_I$ computes this orientation.

More precisely, it remains to show that, if $e$ and $e'$ are two edges of $E(F_1, F_2)$, then $e$ and $e'$ have the same orientation (between $F_1$ and $F_2$) if and only if $\epsilon_I(e)=\epsilon_I(e')$. Suppose that they do not have the same orientation. In this case, there exists an oriented cycle in $F+e+e'$ respecting their orientation. This means that, for some nonnegative integer $l$, there exist $i_1, \dots, i_l\in I$ and $\eta_1, \dots, \eta_l\in\{-1, +1\}$ such that
\[ \eta_1q_{i_1}+\dots+\eta_lq_{i_l}+q_e+q_{e'}=0. \]
Thus,
\begin{align*}
\epsilon_I(e)
 &= \sign(\omega(q_I\concat q_e)) \\
 &= \sign(\omega(q_I\concat(-\eta_1q_{i_1}-\dots-\eta_lq_{i_l}-q_{e'}))) \\
 &= \sign(\omega(q_I\concat(-q_{e'}))) \\
 &= -\epsilon_I(e').
\end{align*}
The other case, with $e$ and $e'$ having the same orientation, leads to the opposite equality. Thus, $\epsilon_{F_2,F_1}(e)\epsilon_I(e)\in\{-1, +1\}$ does not depend on the chosen $e$. Finally, we have proved~\eqref{5:eqn:ex:second_Symanzik_polynomial}.
\end{example}

\subsection{Symanzik rational fractions}
\label{5:subsec:Symanzik_rational_fractions}

In this subsection, we introduce Symanzik rational fractions. The following definition is more canonical than Definition~\ref{5:defi:Symanzik_polynomials} on Symanzik polynomials. Indeed, the problems highlighted in Remarks~\ref{5:rem:another_definition} and~\ref{5:rem:PID} disappear for Symanzik rational fractions. For instance, one can easily define Symanzik rational fractions for families of vectors in $\R^p$. This is useful in order to compute the nonarchimedean height pairing, as we will see in Section~\ref{5:subsec:divisor}. Moreover, these rational fractions have a nice property of stability discussed in Section~\ref{5:sec:variation}.

\begin{definition} \label{5:defi:Symanzik_rational_fractions}
Let $u$ be a family of vectors in $\R^p$, let $l$ be a nonnegative integer and $w$ be a family of $l$ vectors in $\vect u$. The \emph{(normalized) Symanzik rational fraction of order $k$ of $u$ with parameters $w_1, \dots, w_l$} is defined by
\[ \ratSym_k(u; w;x):=\frac{\displaystyle\sum_{\substack{I\subset\zint1n \\ \card I=r-l}}\norm{\mat u_I\wedge\mat w}^k x^{\compl I}}{\displaystyle\sum_{\substack{I\subset\zint1n \\ \card I=r}}\norm{\mat u_I}^k x^{\compl I}}. \qedhere \]
\end{definition}

If elements of $u$ are in $\Z^p$, then we simply have
\[ \ratSym_k(u; w;x)=\frac{\Sym_k(u;w;x)}{\Sym_k(u;x)}. \]

Later in Remark~\ref{5:rem:ratSym_determinantel_formula}, we will give a determinantal formula for Symanzik rational fractions in the case $k=2$.

We have seen in Example~\ref{5:ex:Kirchhoff_s_thm_1} that Kirchhoff and Symanzik polynomials are linked to graphs. In the next section, we will use our generalization to define Symanzik polynomials over simplicial complexes.

\section{Symanzik polynomials on simplicial complexes}
\label{5:sec:simplicial_complexes}

Every result in this section can be extended to CW-complexes.

\medskip
Before extending Symanzik polynomials to simplicial complexes, we will generalize the notion of forests in graph theory to the case of simplicial complexes. Generalized forests will reveal interesting properties of these polynomials.

\subsection{Simplicial complexes and forests}
\label{5:subsec:forest}

Let $V$ be a finite set of \emph{vertices}. An \emph{abstract simplicial complex on $V$} is a nonempty set $\Delta$ of subsets of $V$ called \emph{faces} such that $\Delta$ is stable by inclusion: if $\delta$ is a face and if $\gamma\subset\delta$, then $\gamma$ is also a face. A simplicial complex $\Gamma$ is a \emph{subcomplex} of $\Delta$ if $\Gamma\subset\Delta$. If $\delta$ is a face, its \emph{dimension} is $\dim(\delta):=\card\delta-1$. Notice that $\Delta$ has always a unique face of dimension $-1$: the empty set. The dimension of $\Delta$ is the maximal dimension of its faces. We call it $d$. The $d$-dimensional faces are called \emph{facets}. If $l$ is an integer, $l\geq-1$, then $\Delta_l$ is the set of faces of $\Delta$ of dimension $l$. The \emph{$l$-skeleton $\Delta_{\leq l}$ of $\Delta$} is the subcomplex of all faces of dimension at most $l$ of $\Delta$:
\[ \Delta_{\leq l}:=\bigcup_{i=-1}^l\Delta_i. \]

In this article, we will suppose that a complex $\Delta$ is always endowed with an enumeration on each set of faces $\Delta_l$, $l\in\zint{-1}d$, by numbers from $1$ to $\card{\Delta_l}$.

Let $\Chain_l(\Delta):=\freeZmod{\Delta_l}$ be the set of \emph{$l$-chains} of $\Delta$, where each simplex has the standard orientation given by the enumeration of $\Delta_0$. Let $\d_\Delta$ be the \emph{$d$-th boundary operator} of the augmented simplicial chain complex:
\[ \begin{array}{rrcl}
\d_\Delta: & \Chain_d(\Delta) & \longrightarrow & \Chain_{d-1}(\Delta), \\
& \{i_0, \dots, i_d\}\text{ with $i_0<\dots<i_d$ } & \longmapsto & \sum_{j=0}^d(-1)^j\{i_0,\dots, i_{j-1}, i_{j+1}, \dots, i_d\}.
\end{array} \]
If $\delta_i$ denotes the $i$-th facet for the chosen enumeration, then $(\delta_1, \dots, \delta_{\card{\Delta_d}})$ is a basis of $\Chain_d(\Delta)$. We identify $\Chain_d(\Delta)$ with $\Z^{\card{\Delta_d}}$ mapping this basis to the standard basis. We obtain in the same way a basis of $\Chain_{d-1}(\Delta)$ and an identification $\Chain_{d-1}(\Delta)\simeq\Z^{\card{\Delta_{d-1}}}$. We can now represent $\d_\Delta$ by a matrix in $\M_{\card{\Delta_{d-1}}, \card{\Delta_d}}(\Z)$ which we will call the \emph{$d$-th incidence matrix of $\Delta$}. The kernel of $\d_\Delta$ in $\Chain_d(\Delta)$ is denoted by $\Cyc_d(\Delta)$ and its elements are called \emph{$d$-cycles}. The image of $\d_\Delta$ in $C_{d-1}(\Delta)$ is denoted by $\Bound_{d-1}(\Delta)$ and its elements are called \emph{$(d-1)$-boundaries}.

We can also define $\Cyc_l(\Delta)$ and $\Bound_{l-1}(\Delta)$ for $l<d$ as the kernel and the image of $\bound_{\Delta, l}:=\bound_{\Delta_{\leq l}}$.

Abusing notation, in this section we will denote in the same way an $l$-chain (or a family of $l$-chains) and its image by the identification $\Chain_l(\Delta)\simeq\Z^{\card{\Delta_l}}$.

\begin{example} \label{5:ex:bipyramid}
Figure~\ref{5:fig:bipyramid} is an example of a $2$-dimensional simplicial complex $\Delta$ called the bipyramid. Its $2^\text{nd}$ incidence matrix is shown on the right. We represented a $2$-cycle in blue and a $1$-boundary, $\d_\Delta\big(\{2,3,5\}+\{3,4,5\})$, in red. \qedhere

\begin{figure}
\begin{tikzpicture}[x={(1cm,0)},y={(3.54mm, 3.54mm)},z={(0,.85cm)}, thick]

\def\face[#1] (#2)--(#3)--(#4)--cycle;{
  \begin{scope}[shift=(#3), y={($(#2)-(#3)$)}, x={($(#4)-(#3)$)}]
  \fill[#1] (0,0)--(0,1)--(1,0)--cycle;
  \draw[#1] (.33,.33) [y={(${1/.866}*(-1/2,1)$)},->] ++(30:.2) arc (30:330:.2);
\end{scope} }

\coordinate (1) at (0,0,2.828);
\coordinate (2) at (-2,0,0);
\coordinate (3) at (-60:2);
\coordinate (4) at (60:2);
\coordinate (5) at (0,0,-2.828);

\foreach \i/\j/\k/\l/\m in {
    1/2/3/blue/,
    1/2/4/blue/dashed,
    1/3/4/blue/,
    2/3/4/blue/dashed,
    2/3/5/gray/,
    2/4/5/gray/dashed,
    3/4/5/gray/} {
  \face[color=\l, fill opacity=.1, \m] (\i)--(\j)--(\k)--cycle; }

\foreach \i/\j in {1/2,1/3,1/4,3/5} {
  \draw (\i)--(\j) \midarrow; }
\draw[dashed] (2)--(4) \midarrow;
\foreach \i/\j in {2/3,2/5,3/4,4/5} {
  \draw[very thin] (\i)--(\j) \midarrow; }
\foreach \i/\j in {2/3,3/4,4/5,5/2} {
  \draw[red, -latex] (\i)--(\j); }

\node[above] at (1) {$1$};
\node[left] at (2) {$2$};
\node[right] (node4) at (4) {$4$};
\node[below] at (5) {$5$};

\draw[<-, shorten <=3pt, thin] (3) to[bend right] ++(1cm,-.5cm) node[right] {$3$};

\path (node4.west)++(2cm,0) node[right] {$
\begin{matrix}
&\hspace{-4mm}
\begin{array}{ccccccc}
\phantom{-1} & \phantom{-1} & \phantom{-1} & \phantom{-1} & \phantom{-1} & \phantom{-1} & \phantom{-1} \\[-1em]
  \color{blue}\rotatebox{-90}{$\{1,2,3\}\rightarrow$} &
  \color{blue}\rotatebox{-90}{$\{1,2,4\}\rightarrow$} &
  \color{blue}\rotatebox{-90}{$\{1,3,4\}\rightarrow$} &
  \color{blue}\rotatebox{-90}{$\{2,3,4\}\rightarrow$} &
  \color{gray}\rotatebox{-90}{$\{2,3,5\}\rightarrow$} &
  \color{gray}\rotatebox{-90}{$\{2,4,5\}\rightarrow$} &
  \color{gray} \rotatebox{-90}{$\{3,4,5\}\rightarrow$}
\end{array}
\\[1cm]
\begin{matrix}
  \{1,2\}\rightarrow \\
  \{1,3\}\rightarrow \\
  \{1,4\}\rightarrow \\
  \color{red}\{2,3\}\rightarrow \\
  \{2,4\}\rightarrow \\
  \color{red}\{2,5\}\rightarrow \\
  \color{red}\{3,4\}\rightarrow \\
  \{3,5\}\rightarrow \\
  \color{red}\{4,5\}\rightarrow
\end{matrix}
&\hspace{-4mm}
\begin{pmatrix}
  1 & 1 & 0 & 0 & 0 & 0 & 0 \\
 -1 & 0 & 1 & 0 & 0 & 0 & 0 \\
  0 &-1 &-1 & 0 & 0 & 0 & 0 \\
  1 & 0 & 0 & 1 & 1 & 0 & 0 \\
  0 & 1 & 0 &-1 & 0 & 1 & 0 \\
  0 & 0 & 0 & 0 &-1 &-1 & 0 \\
  0 & 0 & 1 & 1 & 0 & 0 & 1 \\
  0 & 0 & 0 & 0 & 1 & 0 &-1 \\
  0 & 0 & 0 & 0 & 0 & 1 & 1
\end{pmatrix}
\end{matrix} $};

\end{tikzpicture}
\caption{\label{5:fig:bipyramid} A bipyramid and its $d$-th incidence matrix.}
\end{figure}
\end{example}

For the rest of this section, we fix $V$ a finite set, $\Delta$ an abstract simplicial complex on $V$, $d:=\dim(\Delta)$, $n:=\card{\Delta_d}$ and $p:=\card{\Delta_{d-1}}$. We set $U\in\M_{p,n}(\Z)$ the $d$-th incidence matrix of $\Delta$.
As in the previous section, we set $r:=\rk(U)$.

\medskip

Now we define (simplicial) $\kappa$-forests of $\Delta$ following~\cite{BK16} which is based on~\cite{DKM09} and~\cite{Kal83}. This definition is one possibility to generalize the notion of forests in graphs to higher dimensions (see~\cite{DKM09} for a slightly different definition). Indeed, in dimension one, our definition will coincide with the usual one if one sees graphs as $1$-dimensional simplicial complexes.

\begin{definition} \label{5:defi:forest}
Let $\Gamma\subset\Delta$ be a simplicial subcomplex of $\Delta$ such that $\Gamma_{\leq d-1}=\Delta_{\leq d-1}$. Let $\kappa$ be a nonnegative integer.
Then $\Gamma$ is a \emph{$\kappa$-forest of the simplicial complex $\Delta$} if it verifies the following three properties.
\begin{enumerate}
\item \emph{acyclicity}: $\Gamma$ has no nonzero $d$-cycle,
\item $\rk(\d_\Delta)-\rk(\d_\Gamma)=\kappa$,
\item $\card{\Gamma_d}=\card{\Delta_d}-\rk\big(\Cyc_d(\Delta)\big)-\kappa$.
\end{enumerate}
The set of $\kappa$-forests of $\Delta$ is denoted by $\For_\kappa(\Delta)$. Moreover, we will call $\Gamma$ a \emph{subforest} of $\Delta$ if it is acyclic. By Proposition~\ref{5:prop:two_out_of_three} below, this is equivalent to being a $\kappa$-forest for some $\kappa\geq0$.
\end{definition}

\begin{example} \label{5:ex:bipyramid_forest}
Let $\Gamma$ be the bipyramid where we removed facets $\{1, 3, 4\}$ and $\{3, 4, 5\}$. To check that it is a $0$-forest, we look at the $d$-th incidence matrix $U_\Gamma$ of $\d_\Gamma$ (which is a submatrix of $U$). Clearly $\Gamma_{\leq 1}=\Delta_{\leq 1}$. Moreover, the three conditions are verified:
\begin{enumerate}
\item the kernel of $U_\Gamma$ is trivial. Thus, $\Gamma$ is acyclic.
\item $\Im(U_\Gamma)=\Im(U)$. Thus, $\rk(\d_\Delta)-\rk(\d_\Gamma)=0$.
\item $\card{\Gamma_2}=5$, $\card{\Delta_2}=7$ and $\rk(\Cyc_2(\Delta))=\rk(\ker(\d_\Delta))=2$. Thus,
\[ \card{\Gamma_2}=\card{\Delta_2}-\rk{\Cyc_2(\Delta)}-0. \qedhere \]
\end{enumerate}

\end{example}

We will see easier ways to check whether a subcomplex is a $\kappa$-forest in this section. But before, let us motivate the name of forest.

\begin{example} \label{5:ex:forests_on_graphs}

If $G=(V,E)$ is a simple graph, then it can be seen as a $1$-dimensional complex. The three conditions of Definition~\ref{5:defi:forest} for a subgraph $F=(V_F, E_F)$ to be a $\kappa$-forest can be rewritten as
\begin{enumerate}
\item $F$ is acyclic,
\item $F$ has $\kappa$ more connected components than $G$,
\item $\card{E_F}=\card{E}-g_G-\kappa$ (which equals $\card{V}-c_G-\kappa$) where $c_G$ is the number of connected components of $G$ and $g_G$ is the genus of $G$ which equals $\card{E}-\card{V}+c_G$.
\end{enumerate}
Thus, for instance, if $G$ is connected, a $0$-forest is exactly a spanning tree, and a $\kappa$-forest in our definition is what is usually called a $(\kappa+1)$-forest (because in graphs it is more natural to count the number of connected components).
\end{example}

It is well-known that, in graphs, only two out of the three conditions enumerated in the previous example are needed to be a $\kappa$-forest. This is still true in simplicial complexes.

\begin{proposition} \label{5:prop:two_out_of_three}
A subcomplex $\Gamma$ of $\Delta$ such that $\Gamma_{\leq d-1}=\Delta_{\leq d-1}$ is a $\kappa$-forest for a nonnegative integer $\kappa$ if and only if it verifies two out of the three conditions of the Definition~\ref{5:defi:forest}.
\end{proposition}

\begin{proof}
By the rank-nullity theorem, acyclicity (\ie, triviality of $\ker(\d_\Gamma)$) is equivalent to $\rk(\d_\Gamma)=\card{\Gamma_d}$. The same theorem implies that $\rk(\Cyc_d(\Delta))=\card{\Delta_d}-\rk(\d_\Delta)$. Thus, the three conditions can be rewritten as:
\begin{enumerate}
\item $\rk(\d_\Gamma)=\card{\Gamma_d}$,
\item $\rk(\d_\Delta)-\rk(\d_\Gamma)=\kappa$,
\item $\card{\Gamma_d}=\card{\Delta_d}-(\card{\Delta_d}-\rk(\bound_\Delta))-\kappa$.
\end{enumerate}
Now, the proposition is clear.
\end{proof}

Two more remarks before we end this subsection. First, there is a natural bijection between the set of $\kappa$-forests $\Delta$ and the set of independent subfamilies of $u$ of size $r-\kappa$. Thus, the subforests naturally form the family of independent sets of a matroid over $\zint1n$ (we refer to Section~\ref{5:subsec:matroids} for a definition of matroids).

Finally, it is useful to have in mind a practical algorithm to create $\kappa$-forests of $\Delta$. As long as there are still $d$-cycles, choose a cycle, pick a facet in this cycle and remove it. At the end, we will obtain a $0$-forest. One can then remove any $\kappa$ more facets to obtain a $\kappa$-forest. Every $\kappa$-forest can be obtained in this way.

\subsection{Kirchhoff and Symanzik polynomials for simplicial complexes}

Here is some more notations. Set
\[ \Homl_k(\Delta):=\rquot{\Cyc_k(\Delta)}{\Bound_k(\Delta)}, \]
the \emph{$k$-th reduced homology group of $\Delta$}. If $A$ is a $\Z$-module, $\Tor(A)$ will denote its torsion part. Abusing notations, if a face $\delta$ is numbered by $i$, we will often write $\delta$ instead of $i$. In the same way, a set of faces will sometimes denote the set of numbers associated to these faces. Finally, if $I\subset\zint1n$, $\Delta\rest I$ will denote the subcomplex of $\Delta$ with facets labeled by an element of $I$.

\begin{definition} \label{5:defi:Kirchhoff_polynomials_simplicial_complexes}
We define the \emph{Kirchhoff polynomial of order $k$ of $\Delta$} to be the Kirchhoff polynomial of the boundary operator:
\[ \Kir_k(\Delta; x):=\Kir_k(u; x), \]
where we recall that $U$ is the matrix associated to $\bound_\Delta$
\end{definition}

This definition can be rewritten in terms of forests.

\begin{theorem}[Kirchhoff's theorem for simplicial complexes] \label{5:thm:Kirchhoff_s}
One has
\[ \Kir_k(\Delta; x)=\card{\Tor(\Homl_{d-1}(\Delta))}^k\sum_{\text{$\Gamma$ $0$-forest of $\Delta$}}\card{\rquot{\Bound_{d-1}(\Delta)}{\Bound_{d-1}(\Gamma)}}^kx^{\Gamma_d}. \]
\end{theorem}

\begin{proof}
We recall the Definition~\ref{5:defi:Kirchhoff_polynomials} about Kirchhoff polynomials:
\[ \Kir_k(u; x):=\frac1{\norm{\Zvect u}^k}\sum_{\substack{I\subset\zint1n \\ \card I=r}}\norm{\mat u_I}^k x^I. \]
Notice that $\mat u_I$ is nonzero if and only if $U_I$ has a trivial kernel, \ie, if and only if $\Delta\rest I$ is acyclic. Hence, in the above sum, we can restrict to sets $I$ such that $\Delta\rest I$ is a $0$-forest of $\Delta$. We have
\begin{align*}
\Tor(H_{d-1}(\Delta))
 &= \rquot{\Zvect{\Bound_{d-1}(\Delta)}}{\Bound_{d-1}(\Delta)} \\
 &\simeq \frac{\rquot{\Zvect{\Bound_{d-1}(\Delta)}}{\Bound_{d-1}(\Delta\rest I)}}{\rquot{\Bound_{d-1}(\Delta)}{\Bound_{d-1}(\Delta\rest I)}}, \\
 &\simeq \frac{\rquot{\Zvect u}{\Zmod u_I}}{\rquot{\Bound_{d-1}(\Delta)}{\Bound_{d-1}(\Delta\rest I)}},
\end{align*}
Since $\bound_\Delta$ and $\bound_{\Delta\rest I}$ have same rank, we can write
\[ \card{\Tor(H_{d-1}(\Delta))}\cdot\card{\rquot{\Bound_{d-1}(\Delta)}{\Bound_{d-1}(\Delta\rest I)}} = \card{\rquot{\Zvect u}{\Zmod u_I}}. \]
We can now use~\eqref{5:eqn:norm_to_det} to conclude the proof.
\end{proof}

\begin{remark} \label{5:rem:thm:Kirchhoff_s}
Actually, the equality of the theorem takes a simpler form:
\[ \Kir_k(\Delta; x)=\sum_{\text{$\Gamma$ $0$-forest of $\Delta$}}\card{\Tor(\Homl_{d-1}(\Gamma))}^k x^{\Gamma_d}. \]
Indeed,
\[ \Tor(\Homl_{d-1}(\Gamma))=\rquot{\Zvect{\Bound_{d-1}(\Gamma)}}{\Bound_{d-1}(\Gamma)}\simeq\rquot{\Zvect{u_\Gamma}}{\Zmod{u_\Gamma}}=\rquot{\Zvect{u}}{\Zmod{u_\Gamma}}. \]
Though, the form of the theorem enlights that $\Kir_k(\Delta; x)$ is divisible by $\card{\Tor(\Homl_{d-1}(\Delta))}^k$ in $\Z[x]$. This is the factor we have seen above in Example~\ref{5:ex:practical_computation}, and its the $a^k$ of Theorem~\ref{5:thm:duality} and Propositions~\ref{5:prop:Symanzik_determinantal_formula} and~\ref{5:prop:Symanzik_with_parameters_determinantal_formula}.

We can compare the above formula to Corollary 21 of~\cite{BK16} (called Simplicial matrix tree theorem). Let $R\subset\zint1p$ corresponds to the chosen root. We get the corollary setting $k=2$, $x=(1,\dots,1)$ and multiplying both sides by $\norm{(\mat u^\tran)_R}^2/\norm{\Zmod{u^\tran}}^2$.
\end{remark}

\begin{example} \label{5:ex:real_projective_plane}
The real projective plane is a very instructive example. In order to simplify calculations, we study it as a $\Delta$-complex, called $\Delta$, instead of a simplicial complex: see Figure~\ref{5:fig:real_projective_plane}. We refer to~\cite{Hat02}*{p. 103}, for a precise definition of $\Delta$-complexes.

\begin{figure}
\begin{tikzpicture}[thick]

\fill[blue!10] (0,0) rectangle (3,3);
\draw (0,0)
  --++(0,3)  \midarrow node[midway,left] {$a$} node[above left] {$1$}
  --++(3,0)  \midarrow node[midway,above] {$b$} node[above right] {$2$}
  --++(0,-3) \midarrow node[midway,right] (a) {$a$} node[below right] {$1$}
  --++(-3,0) \midarrow node[midway,below] {$b$} node[below left] {$2$}
  --++(3,3)  \midarrow node[midway, left] {$c$};
\draw[->,blue] (2.1,.9) node {$\delta$} ++(30:.4) arc(30:330:.4);
\draw[->,blue] (.9,2.1) node {$\gamma$} ++(30:.4) arc(30:330:.4);

\path (a.west)++(2cm,0) node[right] {$
\begin{matrix}
&
\begin{array}{cc}
  \phantom{-1} & \phantom{-1} \\[-1em]
  \color{blue}\gamma & \color{blue}\delta
\end{array}
\\[-.2em]
&
\begin{array}{cc}
  \phantom{-1} & \phantom{-1} \\[-1em]
  \downarrow & \downarrow
\end{array}
\\
U=\!\!\!\!\!\!
&
\left(
\begin{array}{cc}
 -1 & -1 \\
 -1 & -1 \\
  1 & -1
\end{array}
\right)
&
\!\!\!\!\!\!
\begin{array}{c}
  \leftarrow a \\
  \leftarrow b \\
  \leftarrow c
\end{array}
\end{matrix} $};
\end{tikzpicture}
\caption{\label{5:fig:real_projective_plane} A decomposition of $\RPII$.}
\end{figure}

A basis of $\Zmod{u^\tran}$ is $(\begin{psmallmatrix}1\\1\end{psmallmatrix},\begin{psmallmatrix}1\\-1\end{psmallmatrix})$. Thus, following Example~\ref{5:ex:practical_computation},
\[ \Kir_2(\Delta;(x_1,x_2))=\det(\begin{psmallmatrix}1&1\\1&-1\end{psmallmatrix}\begin{psmallmatrix}x_1&0\\0&x_2\end{psmallmatrix}\begin{psmallmatrix}1&1\\1&-1\end{psmallmatrix})=2^2 x_1x_2. \]
We see that $\Delta$ has only one $0$-forest: itself. Indeed, it does not have nontrivial cycles. Then, looking at Remark~\ref{5:rem:thm:Kirchhoff_s}, $\card{\Tor(\Homl_{d-1}(\Delta))}=2$. Indeed, the class of $a+b$ is an element of order $2$ in $\Tor(\Homl_{d-1}(\Delta))$. Unlike the case of graphs, setting $x_1=x_2=1$ does not give the number of $0$-forests. Finding this number is difficult in general: there exist examples where all the coefficients of the polynomial are not equal (see Example~\ref{5:ex:possible_Symanzik_polynomials}). Knowing the value of the Kirchhoff polynomial for all order $k$ might help us.
\end{example}

\medskip

Now we only focus on Symanzik polynomials.

\begin{definition} \label{5:defi:Symanzik_polynomials_simplicial_complexes}
We define the \emph{Symanzik polynomial of order $k$ of $\Delta$} to be the Symanzik polynomial of the boundary operator:
\[ \Sym_k(\Delta; x):=\Sym_k(u; x). \qedhere \]
\end{definition}

A direct corollary of Theorem~\ref{5:thm:Kirchhoff_s} follows.

\begin{corollary} \label{5:cor:Kirchhoff_s}
One has
\[ \Sym_k(\Delta; x)=\card{\Tor(\Homl_{d-1}(\Delta))}^k\sum_{\text{$\Gamma$ $0$-forest of $\Delta$}}\card{\rquot{\Bound_{d-1}(\Delta)}{\Bound_{d-1}(\Gamma)}}^kx^{\compl{\Gamma_d}}. \]
\end{corollary}

\begin{example} \label{5:ex:metric_graph}
Symanzik polynomials compute some interesting data on metric graphs. For more details about this example, we refer to~\cite{KS00}. Let $G$ be a simple graph with $n$ edges $e_1, \dots, e_n$. For each $i\in\zint1n$, set a positive length $l_i$ to the edge $e_i$. Let $\G$ be the geometric representation of $G$, \ie, $\G$ is a one dimensional $CW$-complex with the metric induced by the lengths. We set $y=(l_1, \dots, l_n)$. Let $g$ be the genus of $G$.

Let $v$ be a basis of $\Cyc_1(G)$. We define the map
\[ \begin{array}{rrcl}
\phi: & \Chain_1(G) & \to & \R^g, \\
 & a & \mapsto & ((a,v_1)_y, \dots, (a,v_g)_y).
\end{array} \]
Actually, $\phi$ could be naturally extended to the singular $1$-chains on $\G$. We now define the Jacobian torus of $\G$ to be
\[ \Jac(\G):=\R^g/\phi(\Cyc_1(G)). \]
The standard inner product in $\R^g$ induces an inner product on $\Chain_1(G)$. The latter is associated to the symmetric matrix $\~Y:=YVV^\tran Y$. Thus,
\begin{equation} \label{5:eqn:ex:metric_graph}
\Vol(\Jac(\G))=\sqrt{\det(V^\tran\~YV)}=\sqrt{\det(V^\tran YV)^2}=\Sym_2(G, y).
\end{equation}

In fact, this equality holds for a deeper combinatorial reason, (\cf \cite{KS00}). Choosing an arbitrary point $p$ of $\G$ leads to a bijection between the set of so-called \emph{break divisors} of $G$ and its Jacobian torus. This bijection induced a quasi-partition of the Jacobian torus into parallelotopes labelled by spanning trees of $G$ (see Figure 1 of~\cite{ABKS14}). The parallelotope associated to a spanning tree $T$ has volume
\[ \prod_{i, e_i\not\in E(T)}l_i. \]
This gives another proof of the equality~\eqref{5:eqn:ex:metric_graph}. \qedhere
\end{example}

In the previous example, the value of the Symanzik polynomial obtained at the end will not change if one adds or deletes some vertices inside an edge of the metric graph. We will see in the next subsection that this result is more general. Before that, let us define Symanzik polynomials with parameters for simplicial complexes.

\begin{definition}
Let $w$ be a family of size $l$ of $(d-1)$-boundaries of $\Delta$. We define the \emph{Symanzik polynomial of order $k$ of $\Delta$ with parameters $w$} to be the Symanzik polynomial of $u$ with parameters $w$:
\[ \Sym_k(\Delta; w; x):=\Sym_k(u; w; x). \qedhere \]
\end{definition}

The following result is a direct corollary of both Theorem~\ref{5:thm:Kirchhoff_s} and Proposition~\ref{5:prop:Symanzik_and_orientations}.

\begin{corollary}
Let $w$ be a family of size $l$ of $(d-1)$-boundaries. Then
\[ \Sym_k(\Delta; w; x)=\card{\Tor(H_{d-1}(\Delta))}^k\sum_{\Gamma\in\For_l(\Delta)}\sum_{\substack{F\subset\compl{\Gamma_d} \\ \Gamma\cup F\in\For_0(\Delta)}}
\Big(\epsilon_{\Gamma_d}(F)\left\vert\frac{\Bound_{d-1}(\Delta)}{\Bound_{d-1}(\Gamma\cup F)}\right\vert(\mat{\~w},\mat e_F)\Big)^kx^{\compl{\Gamma_d}}, \]
where $\epsilon$ is defined before Proposition~\ref{5:prop:Symanzik_and_orientations}, and $\~w$ is a family of $d$-chains of $\Delta$ such that $\bound\~w_j=w_j$ for every $j\in\zint1l$.
\end{corollary}

\begin{remark}
As in Example~\ref{5:ex:second_Symanzik_polynomial}, $\epsilon_{\Gamma_d}$ has a geometrical meaning. For instance, if $l=1$ and if $\Gamma$ is a $1$-forest of $\Delta$, $\epsilon_{\Gamma_d}$ is one of the two opposite canonical orientations on facets $\delta$ of $\compl{\Gamma_d}$ such that $\Gamma+\delta$ is a $0$-forest of $\Delta$. These orientations are characterized by the fact that, for any two different such facets $\delta, \delta'$, any cycle of $\Gamma+\delta+\delta'$ will follow the orientation of exactly one of them.

In matroid theory, such orientations are encoded by oriented matroids, or by generalizations of them like $\R$-matroids. For more details, one can consult~\cite{Piq19-master-thesis} and~\cite{BB16}.
\end{remark}

\subsection{Nonarchimedean height pairing}
\label{5:subsec:divisor}

Let us introduce an interesting inner product which is linked to the Archimedean height pairing. Before going through this construction, let us roughly sketch this link and motivate the results of this section. For more details, the reader can compare Proposition~\ref{5:prop:divprod} with Theorem 1.2 of~\cite{ABBF}. On a smooth projective complex curve $X$ of dimension $n$, if $b$ and $b'$ are two divisors of degree $0$ with disjoint support, one can define the Archimedean height pairing
\[ \divprod{b}{b'}_X:=\mathrm{Re}\Bigl(\int_{\gamma_b}\omega_{b'}\Bigr), \]
where $\gamma_b$ is a $1$-chain disjoint from the support of $b'$ with boundary $b$, and $\omega_{b'}$ is a canonical logarithmic differential form with residue $b'$. If one considers, instead of a single curve $X$, a degenerating family of curves $\X=(X_s)_s$, parametrized by $s$ and degenerating at $s=0$, and two families of divisors $(b_s)_{s\neq0}$ and $(b'_s)_{s\neq0}$ on it, one can wonder what is the asymptotic of $\divprod{b_s}{b'_s}_{X_s}$ as $s$ approaches $0$. Authors of~\cite{ABBF} proved that, under some good conditions, the asymptotic can be described thanks to a Symanzik rational fraction as defined in Section~\ref{5:subsec:Symanzik_rational_fractions}. More precisely, one can associate to the singular fiber $X_0$ a dual graph $G$. Considering $G$ as a $1$-dimensional simplicial complex, the divisors will induce some boundaries $b_0,b'_0\in\Bound_0(G)$ on $G$. Then one can write
\[ \divprod{b_s}{b_s'}_{X_s}=\divprod{b_0}{b_0'}_{y(s)}+h(s), \]
where $h$ is a bounded function and $\divprod{b_0}{b_0'}_{y(s)}$ is defined as below (Definition~\ref{5:defi:divprod}) where $y(s)$ is a vector depending on $s$ and whose coordinates diverge to $+\infty$, as $s$ approaches $0$. This asymptotic is strongly linked to Theorem~\ref{5:thm:variation} of Section~\ref{5:sec:variation}. The dual graph appears in the nonarchimedean analytification of $X_s$, for any $s\neq0$. Thus, one can interpret $\divprod{\wcdot}{\wcdot}_y$ as a nonarchimedean height pairing. Moreover, as we mentioned above, this nonarchimedean height pairing, or more precisely the associated quadratic form, can be expressed as a Symanzik rational fraction. This is stated by Proposition~\ref{5:prop:divprod}.

If one replaces curves by higher dimensional projective complex manifolds, one can still define an analogous of the dual graph, which will be a higher dimensional complex. One can also define an analogous of the Archimedean height pairing. Thus, one can wonder if the asymptotic of the height pairing can be described thanks to a nonarchimedean height pairing on the dual complex as the one we define in Definition~\ref{5:defi:divprod} below.

Let $y$ be a family of $n$ positive real numbers. We denote by $\Chain_d(\Delta; \R)$ the vector subspace $\vect{\Chain_d(\Delta)}$. We endow $\Chain_d(\Delta; \R)$ with the inner product $(\wcdot, \wcdot)_y$ and $\Chain_{d-1}(\Delta; \R)$ with the standard inner product $(\wcdot, \wcdot)$. This allows us to define the adjoint $\bound^*:\Chain_{d-1}(\Delta; \R)\to\Chain_d(\Delta; \R)$ of $\bound$ according to these products.

Notice that $\Im(\bound^*)\subset\ker(\bound)^\perp$ and $\ker(\bound^*)=\Im(\bound)^\perp$. Thus, $\rk(\bound^*)+\dim(\ker(\bound))\leq\dim(C_d(\Delta; \R))$ and $\dim(\ker(\bound^*))+\rk(\bound)=\dim(C_{d-1}(\Delta; \R))$. Using the rank-nullity theorem twice, we deduce that the inequality must be an equality. Hence,
\[ \Im(\bound^*)=\Cyc_d(\Delta;\R)^\perp. \]

Let $b$ and $b'$ be two $(d-1)$-boundaries. One can find a unique preimage $a\in\Chain_d(\Delta; \R)$ of $b$ which is orthogonal to $\Cyc_d(\Delta; \R)$. The orthogonal of $\Cyc_d(\Delta; \R)$ is the image of $\bound^*$. Let $f$ be any preimage of $a$ by $\bound^*$. Then, we have the following definition.

\begin{definition} \label{5:defi:divprod}
With the above notation, we set
\[ \divprod{b}{b'}_y:=(f,b'). \qedhere \]
\end{definition}

Choosing $f'$ such that $b'=\bound\bound^*f'$, one has
\[ \divprod{b}{b'}_y:=(f,\bound\bound^*f')=(\bound^*f,\bound^*f')_y. \]
Thus, the product is indeed well-defined, symmetric and positive-definite.

\begin{proposition} \label{5:prop:divprod}
If $y$ is a family of $n$ positive real numbers and if $b$ is a $(d-1)$-boundary of $\Delta$, then
\[ \divprod{b}{b}_y=\ratSym_2(\Delta;(b);y):=\frac{\Sym_2(\Delta;(b);y)}{\Sym_2(\Delta;y)}. \]
\end{proposition}

\begin{proof}
Let $a$ be the preimage of $b$ orthogonal to $\Cyc_d(\Delta; \R)$, and let $v$ be a basis of $\Cyc_d(\Delta; \R)$. By Propositions~\ref{5:prop:Symanzik_determinantal_formula} and~\ref{5:prop:Symanzik_with_parameters_determinantal_formula}, after simplification of the factors,
\[ \frac{\Sym_2(\Delta;(b);y)}{\Sym_2(\Delta;y)}=\frac{\norm{\mat v\wedge a}_y^2}{\norm{\mat v}_y^2}=\norm{a}_y^2=\divprod{b}{b}_y.  \qedhere \]
\end{proof}

\begin{remark} \label{5:rem:ratSym_determinantel_formula}
If rows of $U$ are free, then the matrix corresponding to $\bound^*$ is $Y^{-1}U^\tran$. Therefore,
\[ \ratSym_2(\Delta;(b);y)=\divprod{b}{b}_y=(b,(\bound\bound^*)^{-1}b)=b^\tran(UY^{-1}U^\tran)^{-1}b. \]
Following the proof of Proposition~\ref{5:prop:divprod}, one can generalize the previous formula. Let $w$ be a family of vectors in $\vect{u}$. Then
\[ \ratSym_2(\Delta;w;y)=\det(W^\tran(UY^{-1}U^\tran)^{-1}W). \qedhere \]
\end{remark}

\subsection{Symanzik polynomials on triangulable topological spaces}
\label{5:subsec:triangulable_space}

So far, we only discussed the case of discrete objects. Actually, we can define the Symanzik polynomial of any sufficiently nice topological space endowed with a diffuse measure (see details below). This definition is consistent with above definitions when we discretize the space by taking a triangulation.

Let $\S$ be the topological space underlying a finite CW-complex of dimension $d$. Assume there is some finite diffuse measure $\mu$ on $\S$, \ie, a measure which is zero on every subset homeomorphic to a $(d-1)$-dimensional ball. The main idea of this subsection is to introduce a characteristic function of some sufficiently nice $d$-dimensional singular chains. For instance, if $a$ is a function from a $d$-simplex to $\S$ which is a homeomorphism onto its image, one can introduce a function $\omega_a:\S\to\R$ which equals $1$ on the image of $a$ (or $-1$ depending on the orientation), and $0$ elsewhere. One can then introduce the canonical $k$-multilinear symmetric product for this kind of functions with respect to $\mu$. This product will be the continuous analogous of $(\wcdot,\dots,\wcdot)_{k,y}$ and will be sufficient to define a natural Symanzik polynomial $\Sym_k(\S;\mu)$. In the previous description $a$ is a particular $d$-chain. In order to take into account enough singular $d$-chains, we need to use the language of sheaves and of relative singular homology.

\medskip

Let $\smS$ be the subset of points $z$ of $\S$ such that $z$ has a neighborhood homeomorphic to a $d$-dimensional ball (thus, $\smS$ is a topological $d$-manifold). Let $(\Chainsing_*(\S), \boundsing)$ be the complex of singular homology of $\S$ with coefficient in $\Z$, as defined for example in~\cite{Hat02}. Let $\sR$ be the \emph{presheaf of relative homology} defined by $\sR(U):=\Hsing_d(\S,\S\setminus U)$ for any open subset $U$ (we refer to~\cite{Hat02} for a definition of relative homology). We believe that $\sR$ may be a sheaf, but we have not found any proof of this in the literature. Anyway, this is not crucial for our purpose, so we leave it as an open question for the moment. One can restrict $\sR$ to $\smS$ to get the \emph{orientation sheaf} of $\smS$ denoted by $\smsR$. Note that any stalk of $\smsR$ is isomorphic to $\Z$. Though $\smsR$ is not always isomorphic to the constant sheaf $\sZ_{\smS}$, (for instance $\smS$ could be the projective plane), we have a canonical isomorphism of sheaves (see, \eg,~\cite{KS90}*{Proposition 3.3.4})
\[ \smsR\otimes\smsR\cong\sZ_{\smS}. \]

Let $a\in\Chainsing_d(\S)$ be a singular $d$-chain of $\S$ with coefficients in $\Z$. To $a$ we associate the section $\omega_a\in\sR(\S\setminus\supp{\boundsing a})$ where $\supp{\boundsing a}$ is the support of the boundary of $a$ (\ie, the union of the images of terms in $\boundsing a$). If $z\in\S$, providing it is well-defined, we denote by $\omega_a(z)$ the germ of $\omega_a$ at $z$. By an abuse of notations, we also denote by $\omega_a$ the restriction of $\omega_a$ to $\smsR$. We denote $\Chainsingreg_d(\S)$ the vector subspace of all $a\in\Chainsing_d(\S)$ such that $\omega_a$ is defined $\mu$-almost everywhere. In particular, if $a\in\Cyc_d(\S)$, then $\omega_a$ is defined on $\S$, thus $a\in\Chainsingreg_d(\S)$.

One can do the same remarks replacing $\Z$ by $\R$. In particular, if $a\in C_{d,sing}(\S;\R)$ is a singular $d$-chain with coefficients in $\R$, one can associate to $a$ the section
\[ \omega_a\in(\sR\otimes\scR_\S)(\S\setminus\supp{\boundsing a}). \]

We put a (positive semidefinite) multilinear inner product on $\Chainsingreg_d(\S; \R)$: for $k$ chains $a_1, \dots, a_k\in\Chainsingreg_d(\S; \R)$,
\[ (a_1,\dots,a_k)_{k,\mu}:=(\omega_{a_1},\dots,\omega_{a_k})_{L^k(\mu)}:=\int_\S\omega_{a_1}\otimes\cdots\otimes\omega_{a_k}d\mu, \]
where the tensor product is considered as a section of $\scR_U$ on some open set $U$ of $\smS$ of full measure. This inner product is also well-defined on $\Hsing_d(\S;\R)\cong(\sR\otimes\scR_\S)(\S)$. Thus, we have the following definition.

\begin{definition} \label{5:defi:Symanzik_polynomials_topological_space}
With the above notations, the \emph{Symanzik polynomial of $\S$ of order $k$ with respect to the measure $\mu$} is defined by
\[ \Sym_k(\S;\mu):=\card{\Tor(\Hsing_{d-1}(\S))}^k\norm{\mat v}_{k,\mu}^k, \]
where $v$ is any basis of $\Hsing_d(\S)$.

Moreover, if $w$ is a family of $l$ elements of $\Boundsing_{d-1}(\S; \R)$ whose supports have measure zero, then the \emph{Symanzik polynomial of $\S$ of order $k$ with respect to the measure $\mu$ and with parameters $w$} is defined by
\[ \Sym_k(\S; w;\mu):=\card{\Tor(\Hsing_{d-1}(\S))}^k\norm{\mat v\wedge\mat{\~w}}_{k,\mu}^k, \]
where $\~w$ is a family of elements of $\Chainsing_d(\S; \R)$ such that $\boundsing\~w_j=w_j$ for any $j\in\zint1l$.
\end{definition}

\begin{example} \label{5:ex:volume_manifold}
For example, the Symanzik polynomial of any order of a compact orientable manifold endowed with a volume form equals its entire volume (the torsion is trivial by the universal coefficient theorem and Poincaré duality).
\end{example}

Let us now study triangulations. We recall that $\Delta$ is a simplicial complex on some vertex set $V$. Let $m:=\card{\Delta_0}$ be the number of vertices of $\Delta$ and assume that $V=\zint1m$. Set
\[ \begin{array}{rrcl}
\nu_\Delta: & \Delta & \to & \P(\R^m), \\
 & \{i_0, \dots, i_l\} & \mapsto & \conv(e_{i_0}, \dots, e_{i_l}),
\end{array} \]
where $(e_1, \dots, e_m)$ is the standard basis of $\R^m$.
We denote by $\geomreal\Delta$ \emph{the standard geometrical realization of $\Delta$} defined by
\[ \geomreal\Delta:=\bigcup_{\delta\in\Delta}\nu_\Delta(\delta). \]

A \emph{triangulation of $\S$} consists of a simplicial complex $\Delta$ and an application $\Psi$ of the form $\Psi=\Phi\circ\nu_\Delta$ where $\Phi:\geomreal\Delta\to\S$ is a homeomorphism.

If $\S$ comes from a regular finite CW-complex, such a triangulation exists. Fix one of them: $(\Delta, \Psi)$. The dimension of $\Delta$ is also $d$.

\begin{theorem} \label{5:thm:Symanzik_polynomials_triangulation}
With above notations, one has
\[ \Sym_k(\S; \mu)=\Sym_k(\Delta; y), \]
where $y_\delta:=\mu(\Psi(\delta))$ for every $\delta\in\Delta_d$.
\end{theorem}

\begin{proof}
The homeomorphism $\Psi$ induces a map $\Psi':\Chain_d(\Delta)\to\Chain_{d,sing}(\S)$. If $\delta$ is a facet of $\Delta$, for each $z\in\interior(\Psi(\delta))$, $\omega_{\Psi'(\delta)}(z)$ is a generator of the stalk of $\smsR$ over $z$, and for each $z\in\smS\setminus\Psi(\delta)$, $\omega_{\Psi'(\delta)}(z)=0$. Thus, for $k$ facets $\delta_1\dots,\delta_k$ of $\Delta$,
\begin{align*}
(\Psi'(\delta_1),\dots,\Psi'(\delta_k))_\mu
  &= \begin{cases}
    \mu(\Psi(\delta_1)) & \quad\text{if $\delta_1=\dots=\delta_k$,} \\
    0 & \quad\text{otherwise}
    \end{cases} \\
  &= (\delta_1, \dots, \delta_k)_y.
\end{align*}
Moreover, it is well-known that $\Psi'$ induces an isomorphism $\Psi_*:\Homl_d(\Delta)\simto\Hsing_d(\Delta)$~\cite{Hat02}*{Theorem 2.27}. Thus, $\Psi_*$ maps bases of $\Cyc_d(\Delta)$ to bases of $\Hsing_d(\S)$. If $v$ is a basis of $\Cyc_d(\Delta)$, and if $w$ is its image by $\Psi'$, the last equality implies that $\norm{\mat w}_{k,\mu}^k=\norm{\mat v}_{k,y}^k$. Moreover, Theorem 2.27 of~\cite{Hat02} also implies
\[ \card{\Tor(\Hsing_{d-1}(\S))}^k=\card{\Tor(\Homl_{d-1}(\Delta))}^k. \]
We conclude by Definition~\ref{5:defi:Symanzik_polynomials_topological_space}, Proposition~\ref{5:prop:Symanzik_determinantal_formula} and Remark~\ref{5:rem:thm:Kirchhoff_s}.
\end{proof}

Thus, the specialization of the polynomial does not depend on the triangulation. Let us study the abstract meaning of this fact.

\subsection{Geometrical factorization of Symanzik polynomials}
\label{5:subsec:factorization}

In this subsection, we use Theorem~\ref{5:thm:Symanzik_polynomials_triangulation} to prove that Symanzik polynomials of simplicial complexes are stable by subtriangulation in the sense of Corollary~\ref{5:cor:Symanzik_polynomials_triangulation}. Moreover, we deduce a canonical factorization of the Symanzik polynomial of a simplicial complex.

Here is a corollary of Theorem~\ref{5:thm:Symanzik_polynomials_triangulation}. Let $\Delta$ be any $d$-dimensional simplicial complex with $n$ facets. Let $(\Gamma, \Psi)$ be a triangulation of $\geomreal{\Delta}$. Assume that there exists a map $\varphi:\Gamma_d\to\Delta_d$ such that, for any facet $\gamma$ of $\Gamma$, $\Psi(\gamma)\subset\nu_\Delta(\varphi(\gamma))$. In this case, we call $(\Gamma, \Psi)$ a \emph{subtriangulation of $\Delta$}. Set $n':=\card{\Gamma_d}$.

\begin{corollary} \label{5:cor:Symanzik_polynomials_triangulation}
Let $x'$ be a family of $n'$ variables. For $\delta\in\Delta_d$, let
\[ x_\delta:=\sum_{\gamma\in\varphi^{-1}(\delta)}x'_\gamma. \]
Then
\[ \Sym_k(\Gamma; x')=\Sym_k(\Delta; x). \qedhere \]
\end{corollary}

\begin{proof}
Let $y'$ be a family of $n'$ positive real numbers. It is not hard to create a finite diffuse measure $\mu$ on $\geomreal{\Delta}$ such that $\mu(\Psi(\gamma))=y'_\gamma$ for every $\gamma\in\Gamma_d$. Thus, considering $(\Delta, \nu_\Delta)$ as a second triangulation of $\geomreal{\Delta}$, Theorem~\ref{5:thm:Symanzik_polynomials_triangulation} implies
\[ \Sym_k(\Gamma; y')=\Sym_k(\Delta; y), \]
where, for $\delta\in\Delta_d$,
\[ y_\delta=\sum_{\gamma\in\varphi^{-1}(\delta)}y'_\gamma. \]
As this is true for any family $y'$ of positive numbers, the equality holds in $\Z[x']$.
\end{proof}

One can go further in the factorization. Let us use notations of the last subsection.
We say that two points $z,z'\in\smS$ are equivalent if, for every $c\in\Hsing_d(\S)$, $\omega_c(z)$ is zero if and only if $\omega_c(z')$ is zero. In particular, all points of a connected component of $\smS$ are equivalent. Equivalent classes form a partition $\smS_1, \dots, \smS_l$ of $\smS$ (notice that some $\smS_j$ could contain several connected components: see Example~\ref{5:ex:S_j_with_several_connected_component}). In what follows, we do not consider the possible class of points $z$ belonging to no cycle, \ie, such that $\omega_c(z)=0$ for every cycle $c$. Let $j\in\zint1l$, $z_j\in\smS_j$ and $c_j\in\Hsing_d(\S)$ be such that $|\omega_{c_j}(z)|\in\Z_+$ is nonzero and minimal. Finally, let $\tau_j\in\smsR(\smS)$ be zero on $\compl{(\smS_j)}$ and equal to $\omega_{c_j}\rest{\smS_j}$ on $\smS_j$.

\begin{proposition} \label{5:prop:factorization_of_Symanzik}
With above notations, $\tau_j$ does not depend on $z_j$ nor $c_j$. Moreover, there exists a polynomial $P_\S\in\Z[T_1, \dots, T_l]$ such that, for every finite diffuse measure $\mu$ on $\S$,
\[ \Sym_k(\S;\mu)=P_\S(\norm{\tau_1}_{L^k(\mu)}^k, \dots, \norm{\tau_l}_{L^k(\mu)}^k). \]
\end{proposition}

\begin{proof}
Let $v$ be a basis of $\Hsing_d(\S)$. Let $n$ be the size of $v$. The Symanzik polynomial equals $P_1((v_{i_1},\dots,v_{i_k})_{k,\mu})_{i_1,\dots,i_k\in\zint1n}$, where $P_1$ is the integer polynomial on $n^k$ variables corresponding to $\Tor(\Hsing_{d-1}(\S))^k\cdot\det$.

Let $c\in\Hsing_d(\S)$. Then
\begin{equation} \label{5:eqn:factorization_of_Symanzik}
\omega_c=\sum_{j=1}^l\frac{\omega_c(z_j)}{\tau_j(z_j)}\tau_j.
\end{equation}
Indeed, for the sake of a contradiction, assume there exists $z\in\smS$ such that $\omega_c(z)\neq\omega'_c(z)$, where $\omega'_c$ denote the right-hand side. Let $j\in\zint1l$ be such that $z\in\smS_j$. Then the cycle
\[ c':=\frac{\omega_c(z_j)}{\tau_j(z_j)}c_j-c \]
verifies $\omega_{c'}(z_j)=0$, but $\omega_{c'}(z)=\omega'_c(z)-\omega_c(z)\neq0$. This is absurd since $z$ and $z_j$ are equivalent.

In particular, if $z\in\smS_j$, then
\[ \frac{\omega_c(z)}{\tau_j(z)}=\frac{\omega_c(z_j)}{\tau_j(z_j)}. \]
This implies that $\tau_j$ does not depend on $z_j$ and $c_j$.

Notice that the ratios in~\eqref{5:eqn:factorization_of_Symanzik} are integers by the minimality condition on $c_j$. Rewriting the elements of $v$ in terms of the $\tau_j$, we get that the Symanzik polynomial equals
\[ P_2((\tau_{j_1},\dots,\tau_{j_k})_{k,\mu})_{j_1,\dots,j_k\in\zint1l} \]
for some integer polynomial $P_2$. But clearly, if $j_1, \dots, j_k\in\zint1l$, then
\[ (\tau_{j_1},\dots,\tau_{j_k})_{k,\mu}=
\begin{cases}
\norm{\tau_{j_1}}_{L^k(\mu)}^k &\text{if $j_1=\dots=j_k$}, \\
0 &\text{otherwise.}
\end{cases} \]
Finally, there exists $P_\S\in\Z[T_1, \dots, T_l]$ such that
\[ \Sym_k(\S;\mu)=P_\S(\norm{\tau_1}_{L^k(\mu)}^k, \dots, \norm{\tau_l}_{L^k(\mu)}^k). \qedhere \]
\end{proof}

Now suppose that $\S=\geomreal{\Delta}$ for some simplicial complex $\Delta$. For every $\delta\in\Delta_d$, $\interior(\nu_\Delta(\delta))$ is included in a connected component of $\smS$, thus, in a $\smS_j$ for some $j\in\zint1l$. We pick a point $z_\delta$ in the interior of $\nu_\Delta(\delta)$ for every facet $\delta\in\Delta_d$. Then we set, for $j\in\zint1l$,
\[ Q_j(x)=\sum_{\delta\in\Delta_d}(\tau_j(z_\delta))^{\otimes k}x_\delta. \]
Using the argument we used to deduce Corollary~\ref{5:cor:Symanzik_polynomials_triangulation} from Theorem~\ref{5:thm:Symanzik_polynomials_triangulation}, we infer the following corollary from Proposition~\ref{5:prop:factorization_of_Symanzik}.

\begin{corollary} \label{5:cor:factorization_of_Symanzik}
With above notations,
\[ \Sym_2(\Delta; x)=P_{\geomreal\Delta}(Q_1(x), \dots, Q_l(x)). \qedhere \]
\end{corollary}

\begin{example}
For example, if $\Delta$ is the bipyramid (Figure~\ref{5:fig:bipyramid}), $\geomreal\Delta$ has three equivalence classes: the equatorial plane and both pyramids. The corresponding factorized polynomial is
\[ \Sym_k(\Delta; x)=(x_1+x_2+x_3)x_4+(x_1+x_2+x_3)(x_5+x_6+x_7)+x_4(x_5+x_6+x_7). \qedhere \]
\end{example}

\begin{example} \label{5:ex:S_j_with_several_connected_component}
Let $\S$ be the following $\Delta$-complex. We take two copies of Figure~\ref{5:fig:real_projective_plane} and we identify the four edges labeled by $a$, and the four edges labeled by $b$. Then $\smS$ has two connected components: the interiors of each copy. But it has only one equivalent class because $\rk(\Cyc_2(\S))=1$.
\end{example}

\begin{example} \label{5:ex:possible_Symanzik_polynomials}
For any $U\in\M_{p,l}(\Z)$, it is not hard to create a regular CW-complex $\S$ (of dimension $2$) such that $U$ is the matrix associated to the cellular boundary operator of $\S$. Thus, triangulating $\S$, there is a simplicial complex $\Delta$ and a factorization of $\Sym_2(\Delta; x)$ of the form $\det(UX'U^\tran)$. Hence, one can somehow obtain any possible Symanzik polynomial from simplicial complexes.
\end{example}

\subsection{Symanzik polynomials with parameters and geometry}
\label{5:subsec:geometry_of_parameters}

We have seen a geometrical interpretation of Symanzik polynomials. In this subsection, we tackle the case of Symanzik polynomials with parameters. We will see in Section~\ref{5:sec:deletion_contraction} that adding a parameter corresponds to doing a \emph{matroidal contraction}. However, this matroidal contraction has, a priori, no concrete geometrical interpretation in general. Nevertheless, in this subsection, we show that, in some simple cases, adding a parameter is equivalent to doing a topological contraction, and we give insight on how to deal with the parameters geometrically for more complex cases.

Recall that $\Delta$ is a simplicial complex of dimension $d$. Let $a$ be a $(d-1)$-chain of $\Delta$. If $\delta\in\Delta_{d-1}$, then $[\delta]a\in\Z$ will denote the coefficient of $\delta$ in $a$. The \emph{support of $a$}, denoted by $\supp a$, is the set of $(d-1)$-faces associated to a nonzero coefficient in $a$. One can see $\Delta$ as a CW-complex. Then, $\rquot{\Delta}{\supp{a}}$ denotes the CW-complex obtained as the result of contracting elements in $\supp{a}$ (see the following example).

\begin{example}
The cellular homology of $\rquot\Delta{\supp a}$ is naturally isomorphic to the relative simplicial homology $\Homl_*(\Delta,\Delta\rest{\supp a})$ (see~\cite{Hat02}). For instance, $\Chain_{d-1}(\rquot\Delta{\supp a})=\rquot{\Chain_{d-1}(\Delta)}{\freeZmod{\supp{a}}}$. If $U$ is the $d$-th incidence matrix of $\Delta$, then the $d$-th incidence matrix of $\rquot{\Delta}{\supp{a}}$ is obtained by removing rows of $U$ corresponding to elements of $\supp{a}$. Hence we can still define the Symanzik polynomial of $\rquot{\Delta}{\supp{a}}$.

In fact, if $(\Delta, \Psi)$ is the triangulation of some CW-complex $\S$, and if $\mu$ is some diffuse finite measure on $\S$, then
\[ \Sym_2(\rquot{\Delta}{\supp{a}}; y)=\Sym_2(\rquot{\S}{\bigcup_{\alpha\in\supp{a}}{\Psi(\alpha)}}; \~\mu), \]
where, for $\delta\in\Delta_d$, $y_\delta=\mu(\Psi(\delta))$, the quotient on the right-hand side is the topological contraction, and $\~\mu$ is the induced measure on the contracted space.
\end{example}

\begin{definition} \label{5:defi:simple}
Let $b\in\Bound_{d-1}(\Delta)$ be a nonzero boundary. Then $b$ is a \emph{simple boundary} of $\Delta$ if it is a generator of $\freeZmod{\supp b}\cap\Bound_{d-1}(\Delta)$.

Let $b, b'\in\Bound_{d-1}(\Delta)$ be two boundaries. Then they are called \emph{cosimple boundaries} if they are simple and if they form an independent family of maximal rank of $\freeZmod{\supp b\cup\supp{b'}}\cap\Bound_{d-1}(\Delta)$. In this case, we define:
\[ \theta_{b,b'}:=\frac{\norm{\Zmod{(b,b')}}}{\norm{\freeZmod{\supp b\cup\supp{b'}}\cap\Bound_{d-1}(\Delta)}}. \]
\end{definition}

\begin{proposition} \label{5:prop:simple_boundaries}
For any $(d-1)$-boundary $b\in\Bound_{d-1}(\Delta;\R)$ with real coefficients, there exist simple boundaries $b_1, \dots, b_l$, pairwise cosimple, and real numbers $\lambda_1, \dots, \lambda_l$ such that
\[ b=\lambda_1b_1+\dots+\lambda_lb_l. \]
\end{proposition}

\begin{proof}
Let $D$ be a maximal subset of $\supp{b}$ such that $\rk(\Cyc_d(\rquot\Delta D))=\rk(\Cyc_d(\Delta))$. Let $\delta\in\supp{b}\setminus D$. Then $\rquot{\Cyc_d(\Delta/(D+\delta))}{\Cyc_d(\Delta)}$ has rank $1$. Let $\~a_\delta$ be a generator of this quotient. Let $a_\delta\in\Chain_d(\Delta)$ be a representative of $\~a_\delta$. Let $b_\delta$ be its boundary. One can easily check that $(b_\delta)_{\delta\in\supp{b}\setminus D}$ forms a family of simple and pairwise cosimple boundaries, and that
\[ b=\sum_{\delta\in\supp{b}\setminus D}\frac{[\delta]b}{[\delta]b_\delta}b_\delta. \qedhere \]
\end{proof}

\begin{remark} \label{5:rem:Symanzik_and_contraction}
Let $b, b'$ be cosimple boundaries of $\Delta$ and let $a, a'$ be $d$-chains such that $\bound a=b$ and $\bound a'=b'$. Let $v$ be a basis of $\Cyc_d(\Delta)$. Then, $v\concat (a)$ is a basis of $\Cyc_d(\rquot\Delta{\supp b})$, and that $v\concat (a, a')$ is a basis of the vector subspace $\vect{\Cyc_d(\rquot\Delta{(\supp b\cup\supp{b'})})}$. Thus, by Propositions~\ref{5:prop:Symanzik_determinantal_formula},~\ref{5:prop:Symanzik_with_parameters_determinantal_formula} and Remark~\ref{5:rem:thm:Kirchhoff_s},
\begin{gather*}
\Sym_k(\Delta; x)=\Tor(H_{d-1}(\Delta))^k\norm{\mat v}_{k,x}^k, \\
\Sym_k(\Delta; (b); x)=\Tor(H_{d-1}(\Delta))^k\norm{\mat v\wedge a}_{k,x}^k, \\
\Sym_k(\rquot{\Delta}{\supp b}; x)=\Tor(H_{d-1}(\rquot{\Delta}{\supp b}))^k\norm{\mat v\wedge a}_{k,x}^k, \\
\Sym_k(\Delta; (b, b'); x)=\Tor(H_{d-1}(\Delta))^k\norm{\mat v\wedge a\wedge a'}_{k,x}^k, \\
\Sym_k(\rquot{\Delta}{(\supp b\cup\supp{b'})}; x)=\frac{\Tor\big(H_{d-1}(\rquot{\Delta}{(\supp b\cup\supp{b'})})\big)^k}{\theta_{b,b'}^k}\norm{\mat v\wedge a\wedge a'}_{k,x}^k.
\end{gather*}
The last denominator comes from the following diagram:
\[ \begin{tikzcd}
0 \arrow[r] & \Zvect{v} \arrow[no head, d, "\sim"{sloped, above, inner sep=1pt}] \arrow[r] & \Zvect{v\concat(a,a')} \arrow[r, "\bound"] & \freeZmod{\supp{b}\cup\supp{b'}}\cap\Bound_{d-1}(\Delta) \arrow[r] & 0 \\[-1.3em]
0 \arrow[r] & \Zmod{v} \arrow[r] & \Zmod{(v\concat(a,a'))} \arrow[u, draw=none, "\subset"{sloped, above}] \arrow[r, "\bound"] & \Zmod{(b,b')} \arrow[u, draw=none, "\subset"{sloped, above}] \arrow[r] & 0
\end{tikzcd} \]

Thus, adding a parameter which is a simple boundary is equivalent to contracting topologically the support of this boundary, up to a factor. This is still true for two parameters which are cosimple.
\end{remark}

Let $y$ be a family of $n$ variables and $b, b'\in\Bound_{d-1}(\Delta;\R)$. We recall from Section~\ref{5:subsec:divisor} that
\[ \divprod{b}{b'}_y=(h,h')_y, \]
where $h$, \resp $h'$, is the unique preimage of $b$, \resp $b'$, by $\bound$ that is orthogonal to $\Cyc_d(\Delta;\R)$ for the scalar product $(\wcdot,\wcdot)_y$.

Thanks to Proposition~\ref{5:prop:simple_boundaries} and to Remark~\ref{5:rem:Symanzik_and_contraction}, the following proposition allows us to deal with more complex cases for order 2.

\begin{proposition} \label{5:prop:geometry_of_parameters}
Let $y$ be a family of $n$ positive real numbers. If $v$ is a basis of $\Cyc_d(\Delta)$ and if $a_1, \dots, a_l$ are some $d$-chains of $\Delta$ with real coefficients, then
\begin{gather*}
\norm{\mat v\wedge(a_1+\dots+a_l)}_y^2=\sum_{i=1}^l\norm{\mat v\wedge a_i}_y^2+2\norm{\mat v}_y^2\sum_{1\leq i<j\leq l}\divprod{\bound a_i}{\bound a_j}_y\quad\text{and} \\
\norm{\mat v}_y^2\divprod{\bound a_i}{\bound a_j}_y=\sign(\divprod{\bound a_i}{\bound a_j}_y)\sqrt{\norm{\mat v\wedge a_i}_y^2\norm{\mat v\wedge a_j}_y^2-\norm{\mat v}_y^2\norm{\mat v\wedge a_i \wedge a_j}_y^2}.
\end{gather*}
\end{proposition}

\begin{proof}
Let $\pi$ be the orthogonal projection onto $\vect v=\Cyc_d(\Delta,\R)$ for the scalar product $(\wcdot,\wcdot)_y$. For each $i\in\zint1n$, let $h_i:=a_i-\pi(a_i)$. Then,
\[ \norm{\mat v\wedge(a_1+\dots+a_l)}_y^2=\norm{\mat v}_y^2\norm{h_1+\dots+h_l}_y^2=\norm{\mat v}_y^2\sum_{i=1}^l\norm{h_i}_y^2+2\norm{\mat v}_y^2\sum_{1\leq i<j\leq l}(h_i,h_j)_y. \]
We have $(h_i,h_j)_y=\divprod{\bound a_i}{\bound a_j}_y$. Moreover one can factorize $\norm{\mat v}_y\norm{h_i}_y$ into $\norm{\mat v\wedge a_i}_y$. The following calculation gives us the second part of the proposition:
\[ \norm{\mat v\wedge a_i\wedge a_j}_y^2=\norm{\mat v}_y^2\norm{h_i\wedge h_j}_y^2=\norm{\mat v}_y^2\big(\norm{h_i}_y^2\norm{h_j}_y^2-(h_i,h_j)_y^2\big). \qedhere \]
\end{proof}

\begin{remark}
Notice that
\[ P(y):=\norm{\mat v}_y^2\divprod{\bound a_i}{\bound a_j}_y=(\mat v\wedge a_i,\mat v\wedge a_j)_y \]
is a polynomial in $y$. More precisely, it is one of the two square roots of the polynomial $\norm{\mat v\wedge a_i}_y^2\norm{\mat v\wedge a_j}_y^2-\norm{\mat v}_y^2\norm{\mat v\wedge a_i \wedge a_j}_y^2$. Let $J\subset\zint1n$ and $\alpha_J\in\R^*$ be such that $\alpha_Jy^{\compl J}$ is a nonzero monomial of $P(y)$. It is enough to find the sign of $\alpha_J$ to infer which square root is $P(y)$. The set $J$ corresponds to a $1$-forest of $\Delta$. Let $\delta\in\compl J$ be a facet such that $\Gamma:=\Delta\rest{J+\delta}$ is a $0$-forest of $\Delta$. Then let $a_i', a_j'\in\Chain_d(\Gamma; \R)$ be such that $\bound a_i'=\bound a_i$ and $\bound a_j'=\bound a_j$. Then,
\begin{gather*}
P(y)=(\mat v\wedge a_i',\mat v\wedge a_j')_y, \text{ and} \\
\alpha_J=(\mat v\wedge a_i',\mat e_{\compl J})(\mat v\wedge a_j', \mat e_{\compl J})=(\pm\det(V_{\compl I})[\delta]a_i')(\pm\det(V_{\compl I})[\delta]a_j'),
\end{gather*}
where the last equality comes from the Laplace cofactor expansion along the last row, and where both signs are equal. Finally, one only has to look to the orientations of $\delta$ in $a_i'$ and $a_j'$:
\[ \sign(\alpha_J)=\sign([\delta]a_i'[\delta]a_j'). \qedhere \]
\end{remark}

\begin{example}
\newcommand{\drawG}{\raisebox{-1em}{\tikz[scale=0.6]{
  \scriptsize
  \draw (0,0) node {$\bullet$};
  \draw (0,1) node {$\bullet$};
  \draw (0,0) to[bend left=90] node[midway] {$\bullet$} (0,1);
  \draw (0,0) to node[midway] {$\bullet$} (0,1);
  \draw (0,0) to[bend right=90] node[midway] {$\bullet$} (0,1);}}}
\newcommand{\drawGl}{\raisebox{-1em}{\tikz[scale=0.6]{
  \scriptsize
  \draw (0,0) node {$\bullet$};
  \draw (0,1) node {$\bullet$};
  \draw (0,0) to[bend left=80] (0,0.5) to[bend left=80] (0,1);
  \draw (0,0) to node[midway] {$\bullet$} (0,1);
  \draw (0,0) to[bend right=80] node[midway] {$\bullet$} (0,1);}}}
\newcommand{\drawGr}{\raisebox{-1em}{\tikz[scale=0.6]{
  \scriptsize
  \draw (0,0) node {$\bullet$};
  \draw (0,1) node {$\bullet$};
  \draw (0,0) to[bend left=80] node[midway] {$\bullet$} (0,1);
  \draw (0,0) to node[midway] {$\bullet$} (0,1);
  \draw (0,0) to[bend right=80] (0,0.5) to[bend right=80] (0,1);}}}
\newcommand{\drawGlr}{\raisebox{-1em}{\tikz[scale=0.6]{
  \scriptsize
  \draw (0,0) node {$\bullet$};
  \draw (0,1) node {$\bullet$};
  \draw (0,0) to[bend left=80] (0,0.5) to[bend left=80] (0,1);
  \draw (0,0) to node[midway] {$\bullet$} (0,1);
  \draw (0,0) to[bend right=80] (0,0.5) to[bend right=80] (0,1);}}}

The results of this subsection let us make computation like the following one. Consider the following graph $G$.

\begin{center}
\begin{tikzpicture}
\coordinate (v1) at (0,1.5);
\coordinate (v2) at (-1,0);
\coordinate (v3) at (0,0);
\coordinate (v4) at (1,0);
\coordinate (v5) at (0,-1.5);

\Large
\draw (v1) node {$\bullet$} node[above] {$v_1$};
\draw (v2) node {$\bullet$} node[left] {$v_2$};
\draw (v3) node {$\bullet$} node[left] {$v_3$};
\draw (v4) node {$\bullet$} node[right] {$v_4$};
\draw (v5) node {$\bullet$} node[below] {$v_5$};

\normalsize
\draw (v1)to[out=-150, in=90] node[midway, left] {$y_1$} (v2) to[out=-90, in=150] node[midway, left] {$y_4$} (v5);
\draw (v1)--(v3) node[midway, left] {$y_2$} --(v5) node[midway, left] {$y_5$};
\draw (v1)to[out=-30, in=90] node[midway, right] {$y_3$} (v4) to[out=-90, in=30] node[midway, right] {$y_6$} (v5);
\end{tikzpicture}
\end{center}

For example, we have the following decomposition into simple boundaries
$2v_2-v_3-v_4=b+b'$ where $b=v_2-v_3$ and $b'=v_2-v_4$. Since we are in a graph, all factors from Remark~\ref{5:rem:Symanzik_and_contraction} are trivial. Moreover, we have $\sign(\divprod{b}{b'}_y)$ is constant equal to $+1$. Hence,
\begin{eqnarray*}
\lefteqn{\Sym_2(G;(2v_2-v_3-v_4);y)} \hspace{2.5cm} & \\
  & = & \Sym_2(G;(b);y)+\Sym_2(G;(b');y) \\
  && +2\sqrt{\Sym_2(G;(b);y)\Sym_2(G;(b');y)-\Sym_2(G;y)\Sym_2(G;(b,b');y)} \\
  & = & \Sym_2(\drawGl;y)+\Sym_2(\drawGr;y) \\
  && +2\sqrt{\Sym_2(\drawGl;y)\Sym_2(\drawGr;y)-\Sym_2(\drawG;y)\Sym_2(\drawGlr;y)}.
\end{eqnarray*}
For example, if we set $y_1=\dots=y_6=1$, then it remains to count the number of spanning trees:
\[ \Sym_2(G;(2v_2-v_3-v_4);y)=12+12+2\sqrt{12\cdot12-12\cdot9}=36. \qedhere \]
\end{example}

\section{Generalization to matroids over hyperfields} \label{5:sec:deletion_contraction}

We have seen that many properties of the classical Kirchhoff and Symanzik polynomials on graphs are still true for our generalization. In this section, we wish to generalize two very important properties of the classical case: the deletion contraction formula~\eqref{5:eqn:deletion_contraction_graph} and the partial factorization~\eqref{5:eqn:partial_factorization}.

To generalize these formul\ae, we need to define deletion and contraction for family of vectors. The natural objects to consider for contractions and deletions are matroids. We will recall the definitions and the main properties in a first subsection.

But classical matroids do not contain enough data. We have to assign some coefficients to the bases. Recently, in~\cite{BB16}, Baker and Bowler define matroids over hyperfields. To do so, they use Grassman-Plücker functions. These will be the main tool to achieve our generalization of these formul\ae. Though we will work in the general setting of hyperfields, to avoid technicalities one might only consider the case of fields, which will be sufficient for the case of family of vectors.

\subsection{Reminder on matroids}
\label{5:subsec:matroids}

We begin with recalling basic definitions and properties about matroids without any proof. We refer to~\cite{Oxl11} for more information.

A matroid can have many equivalent, or \emph{cryptomorphic}, definitions. Here is a first one.

\begin{definition} \label{5:defi:matroid}
A \emph{matroid} $\Ma$ is a pair which consists of a \emph{ground set} $\E$, which can be any finite set, and a family of \emph{independent sets} $\Ind$, which is a subset of $\P(\E)$. We write $\Ma=(\E, \Ind)$. A matroid has to verify three axioms:
\begin{enumerate}
\item $\emptyset\in\Ind$,
\item (hereditary property) $\Ind$ is stable by inclusion ($J\subset I\in\Ind\Rightarrow J\in\Ind$),
\item (augmentation property) if $I, J\in\Ind$ and if $\card J<\card I$, then there exists $i\in I\setminus J$ such that $J+i\in\Ind$. \label{5:defi:matroid:augmentation_property}  \qedhere
\end{enumerate}
\end{definition}

One could think of $E$ as a set of vectors generating some vector space, and $\Ind$ as the set of free subfamilies of $E$ (though, there are matroids which have not such a representation).

Let $\Ma=(E,\Ind)$ be a matroid. If $I\subset\E$, we define the \emph{rank of $I$} by
\[ \rk(I):=\max_{J\in\P(I)\cap\Ind}\card J. \]
The \emph{rank of $\Ma$} is the rank of $E$. We call the \emph{closure of $I$} the set
\[ \cl(I):=\{i\in\E \st \rk(I\cup\{i\})=\rk(I)\}\subset\E. \]
This is a closure operator: it is extensive, increasing and idempotent.

A \emph{basis} is an independent set maximal for the inclusion. The set of all bases is denoted by $\Bas(\Ma)$. If $l$ is a nonnegative integer, the set of all independents of rank $l$ is denoted by $\Ind_l$. Note that we have, $\Bas(\Ma)=\Ind_{\rk(\Ma)}$ and that, for a given ground set $E$, a matroid is characterized by its bases. The bases verify the following \emph{exchange property}:
if $B_1, B_2$ are two different bases, then for any $i\in B_1\setminus B_2$, there exists $j\in B_2\setminus B_1$ such that $B_1-i+j$ (\ie, $B_1\setminus\{i\}\cup\{j\}$) is a basis.

An element $e\in E$ is said to be:
\begin{itemize}
\item a \emph{loop} if $\{e\}\not\in\Ind$,
\item \emph{free} if it is in every basis, and
\item \emph{proper} otherwise.
\end{itemize}

\medskip

Let $I$ be a subset of $E$. One can delete $I$ from $\Ma$ to get a new matroid denoted by $\Ma\del I$ and defined by
\[ \Ma\del I:=(E\setminus I, \Ind\cap\P(E\setminus I)). \]
We also define the restriction of $\Ma$ to $I$ by $\Ma\mrest I:=\Ma\del\compl I$.

One can also contract $I$ in $\Ma$ to get a new matroid denoted by $\Ma\contr I$ and defined by
\[ \Ma\contr I:=(E\setminus I, \Ind'), \]
where
\begin{align*}
\Ind' &= \{J\subset E\setminus I \st \exists B\in\Bas(\Ma\mrest I), J\cup B\in\Ind \} \\
  &= \{J\subset E\setminus I \st \forall B\in\Bas(\Ma\mrest I), J\cup B\in\Ind \}.
\end{align*}

Concerning the bases, we have the following characterizations:
\begin{equation} \label{5:eqn:contraction_deletion_bases}
\begin{split}
\Bas(\Ma\mrest I)=\{B\cap I \st B\in\Bas(\Ma)\text{ and $\card{B \cap I}$ is maximal} \},\text{ and} \\
\Bas(\Ma\contr I)=\{B\cap \compl I \st B\in\Bas(\Ma)\text{ and $\card{B \cap \compl I}$ is minimal} \}.
\end{split}
\end{equation}
Moreover, if $I\subset E$ and $B\subset E$, any two of the following statements imply the third one.
\begin{flalign} \label{5:eqn:two_out_of_three_bases}
\begin{cases}
\bullet\ B\cap I\in\Bas(\Ma\mrest I),\\
\bullet\ B\cap\compl I\in\Bas(\Ma\contr I), \\
\bullet\ B\in\Bas(\Ma).
\end{cases} &&
\end{flalign}
From~\eqref{5:eqn:contraction_deletion_bases} and~\eqref{5:eqn:two_out_of_three_bases}, we deduce the following equivalence.
\begin{equation} \label{5:eqn:equivalence_bases}
B\in\Bas(\Ma)\text{ and }\card{B\cap I}=\rk(I)\quad\iff\quad
B\cap I\in\Bas(\Ma\mrest I)\text{ and }B\cap\compl I\in\Bas(\Ma\contr I).
\end{equation}

The dual of the matroid $\Ma$ will be denoted by $\hat\Ma$ and is defined by
\[ \Bas(\hat\Ma)=\{\compl B\st B\in\Bas(\Ma)\}. \]
This is a duality in the sense that we have $\hat{\hat\Ma}=\Ma$. Moreover, $\hat{\Ma\del I}=\hat\Ma\contr I$.

\subsection{Kirchhoff and Symanzik polynomials of a matroid}

Let $n$ be a positive integer, $E:=\zint1n$ be a finite set and $\Ma=(\E, \Ind)$ be a matroid of rank $r$.

If $u=(u_1, \dots, u_n)$ is a family of elements of a $\Z$-module, we denote by $\Ma_{u}=(\E_{u}, \Ind_{u})$, with $\E_u=\zint1n$ and
\[ \Ind_u=\{I\subset E\st \text{$u_I$ is free}\}, \]
the \emph{matroid representing the family $u$}.

Here is the definition of Kirchhoff and Symanzik polynomials of a matroid.
\begin{definition} \label{5:defi:polynomials_for_matroids}
The \emph{Kirchhoff polynomial (of order $0$) of the matroid $\Ma$ with variables $x_1, \dots, x_n$} is defined by
\[ \Kir_0(\Ma;x)=\sum_{I\in\Bas(\Ma)}x^I. \]
The \emph{Symanzik polynomial (of order $0$) of the matroid $\Ma$ with variables $x_1, \dots, x_n$} is defined by
\[ \Sym_0(\Ma;x)=\sum_{I\in\Bas(\hat\Ma)}x^I. \qedhere \]
\end{definition}

This definition is natural because of the following claim, whose proof is straightforward.

\begin{claim} \label{5:claim:matrices_to_matroids}
Let $u$ be a family of $n$ elements in $\Z^p$. Then
\begin{gather*}
\Kir_0(u;x)=\Kir_0(\Ma_{u};x), \\
\Sym_0(u;x)=\Sym_0(\Ma_{u};x).
\end{gather*}
\end{claim}

The analogous for matroids of the duality Theorem~\ref{5:thm:duality} is given by the following claim, which is a direct corollary of Claim~\ref{5:claim:matrices_to_matroids} and of Theorem~\ref{5:thm:duality} for $k=0$.

\begin{claim} \label{5:claim:duality_matroids}
Let $q$ be a positive integer and $v$ be a family of $n$ vectors in $\Z^q$ such that $v^\tran$ spans $\ker(U)$, then $\Ma_{u}=\hat\Ma_{v}$.
\end{claim}

Next proposition shows that parameters of Symanzik polynomials correspond to a so-called \emph{quotient} of the matroid.

If $w$ is a family of elements of $\vect{u}$, then $\Ma_{u\concat w}$ is an extension of $\Ma_{u}$, \ie, we have $\Ma_{u}$ equals $\Ma_{u\concat w}\mrest{\zint1n}$.

\begin{proposition}
Let $w$ consist of independent elements of $\vect{u}$. Then
\[ \Sym_0(u; w; x)=\Sym_0(\Ma_{u\concat w}\contr{\zint{n+1}{n+l}};x). \]
\end{proposition}

\begin{proof}
Looking at Definition~\ref{5:defi:Symanzik_polynomials_with_parameters}, the left-hand side is the sum of $x^{\compl I}$ over all $I\subset\zint1n$ of size $r-l$ such that $u_I\concat w$ is an independent family. But such sets $I$ exactly correspond to the bases of the matroid $\Ma_{u\concat w}\contr\zint{n+1}{n+l}$. Thus, Definition~\ref{5:defi:polynomials_for_matroids} ends the proof.
\end{proof}

\begin{remark} \label{5:rem:non_zero_determinants}
In particular, if $I\subset\zint1n$ is of size $r-l$, then $x^{\compl I}$ appears with a nonzero coefficient in $\Sym_k(u;w;x)$ only if $I$ is in $\Ind_u$. Therefore, if $\Sym_k(u;w;x)=\norm{\~{\mat v}^\tran}_{k,x}^k$ for some family $\~v$, then $\det(\~V_{\compl I})\neq0$ implies that $I\in(\Ind_u)_{r-l}$. This implication is an equivalence in the case $l=0$.
\end{remark}

In the next subsection, we generalize the definitions to higher orders.

\subsection{Grassman-Plücker functions} \label{5:subsec:hyperfield}

In this subsection we introduce the main tool to work with orders higher than $0$. We will work with hyperfields, which are similar to fields except that the operation of addition is multivalued. We will quickly reproduce the axioms from~\cite{BB16} below and refer for more information and examples to the literature (see, \eg,~\cites{BB16, BL21}). However, if the reader prefers, it is possible to think of $\FF$ as a field (in this case, \enquote{$\ni 0$} have to be replaced by \enquote{$= 0$}).

In a hyperfield $\FF$, the addition is a \emph{hyperoperation}, \ie, a map $\hp: \FF\times \FF\to\P(\FF)\setminus\{\emptyset\}$. If $A,B\subset\FF$, we define
\[ A\hp B:=\bigcup_{\substack{a\in A\\b\in B}}a\hp b.\]
We require $\hp$ to be associative: $(a\hp b)\hp c=a\hp (b\hp c)$ for every $a,b,c\in\FF$.

A \emph{commutative hypergroup} is a tuple $(G,\hp,0)$, where $\hp$ is a commutative and associative hyperoperation on $G$ such that:
\begin{itemize}
\item $0\hp x=\{x\}$ for every $x\in G$,
\item for every $x\in G$, there is a unique element $-x\in G$ such that $0\in x\hp-x$,
\item $x\in y\hp z$ if and only if $z\in x\hp-y$.
\end{itemize}

A \emph{hyperfield} is a tuple $(\FF,\wcdot,\hp,1,0)$ with $0\neq1$ such that:
\begin{itemize}
\item $(\FF\setminus\{0\},\wcdot,1)$ is a commutative group,
\item $(\FF,\hp,0)$ is a commutative hypergroup,
\item $0\cdot x=x \cdot0=0$ for every $x\in\FF$,
\item $a\cdot(x\hp y)=(a\cdot x)\hp(a\cdot y)$ for all $a, x, y\in\FF$.
\end{itemize}

\medskip

Let $n$ be a positive integer and $E:=\zint1n$ be a finite set. A Grassman-Plücker function over $\FF$ of rank $r$ is a function $\varphi:E^r\to\FF$ such that
\begin{itemize}
\item $\varphi$ is not identically zero,
\item (alternating) for any $I=(i_1,\dots,i_r)\in E^r$ and for any two different indices $\ell, m$,
\[ \varphi(i_1,\dots,i_\ell,\dots,i_m,\dots,i_r)=-\varphi(i_1,\dots,i_m,\dots,i_\ell,\dots,i_r), \]
and
$\varphi(i_1,\dots,i_\ell,\dots,i_m,\dots,i_r)=0$
if $i_\ell=i_m$,
\item (Grassman-Plücker relations) for any $I=(i_1,\dots,i_r), J=(j_1,\dots,j_r)\in E^r$ and any $\ell\in\zint1r$,
\[ \bighp_{m=1}^r \varphi(i_1,\dots,i_{m-1},j_\ell,i_{m+1},\dots,i_r)\varphi(j_1,\dots,j_{\ell-1},i_m,j_{\ell+1},\dots,j_r) \hp -\varphi(I)\varphi(J)\ni 0. \]
\end{itemize}

Recall that, if $I=\{i_1,\dots,i_l\}\subset E$ with $i_1<\dots<i_l$, then we also denote by $I$ the $l$-tuple $(i_1,\dots,i_l)\in E^l$. We this convention, a Grassman-Plücker function $\varphi$ of rank $r$ induces a function on the subsets of $E$ of size $r$. Moreover, since $\varphi$ is alternating, $\varphi$ is characterized by its value on the subsets of $E$ of size $r$.

Let $\varphi$ be a Grassman-Plücker function over $\FF$ of rank $r$ . Then one can verify that the set
\[ \{I\subset E \st \card{I}=r\text{ and }\varphi(I)\neq 0\} \]
is the set of bases of some matroid over the ground set $E$. We call this matroid the \emph{underlying matroid of $\varphi$} and denote it by $\Ma_\varphi$.

For example, if $u=(u_1,\dots,u_n)$ is a family of elements of $\Z^p$ of rank $r$, we define
\[ \begin{array}{rrcl}
\varphi_u: & E^r & \to & \R, \\
 & I & \mapsto & \det(V_I),
\end{array}\]
where $V$ is defined as in Equation~\eqref{5:eqn:norm_to_det}. Then $\varphi_u$ is a Grassman-Plücker function and its underlying matroid is $\Ma_u$, as defined in the previous subsection.

Here is the main definition of this section.
\begin{definition}
Let $\FF$ be a hyperfield and let $\varphi$ be a Grassman-Plücker function over $\FF$. Let $\Ma$ be its underlying matroid. Then we define
\begin{align*}
\Kir_k(\varphi; x) &:= \sum_{I\in\Bas(\Ma)}\varphi(I)^kx^I, \\
\Sym_k(\varphi; x) &:= \sum_{I\in\Bas(\Ma)}\varphi(I)^kx^{\compl I}. \qedhere
\end{align*}
\end{definition}

Note that this definition is compatible with Section~\ref{5:sec:duality}, \ie,
\[ \Kir_k(\varphi_u; x)=\Kir_k(u;x)\text{\quad and\quad}\Sym_k(\varphi_u; x)=\Sym_k(u;x). \]

\begin{remark}
In order to take into account complex matroids (\ie, matroids over the phase hyperfield, \cf \cites{AD12, BB16}), one may want to consider an involution $\varphi\mapsto\overline\varphi$. In this case, $\varphi$ must be replaced by $\overline\varphi$ in the definition of the Symanzik polynomials.
\end{remark}

\begin{remark}
One has to be careful with polynomials with coefficients in a hyperfield. Several definitions are possible (\cf \cite{BL18}*{Appendix A}). However, in this article, every polynomials over hyperfields will be of the form
\[ P(x)=\sum_{I\subset E} p_Ix^I \]
with coefficients $p_I$ in $\FF$ (and not a subset of $\FF$ nor a sum of elements in $\FF$). In particular, if $P(x)=\sum_I p_Ix^I$ and $Q(x)=\sum_I q_Ix^I$, we will only add $P$ and $Q$ if, for any $I$, $p_I=0$ or $q_I=0$. In the same way, we will only multiply $P$ and $Q$ if for any $I$ and $J$ such that $p_I\neq0$ and $q_J\neq0$, their intersection $I\cap J$ is empty.
\end{remark}

\medskip

We now explain how to extend deletion, contraction and duality. Let $\varphi$ be a Grassman-Plücker function of rank $r$ over $\FF$ and let $\Ma$ be its underlying matroid. For any subset $I\subset E$, we choose a basis $B_{\varphi\contr I}\in\Bas(\Ma\mrest I)$, and a basis $B_{\varphi\del I}\in\Bas(\Ma\contr \compl{I})$.

Following~\cite{BB16}, we define $\varphi\del I:(E\setminus I)^{\rk(\Ma\del I)}\to\FF$ by
\[ (\varphi\del I)(J)=\varphi(J\concat B_{\varphi\del I}). \]
We also define the restriction: $\varphi\mrest I:=\varphi\del\compl I$.
Similarly, we define $\varphi\contr I:(E\setminus I)^{\rk(\Ma\contr I)}\to\FF$ by
\[ (\varphi\contr I)(J)=\varphi(J\concat B_{\varphi\contr I}). \]
If $I'\subset E$ is disjoint from $I$, we set
\[ (\varphi\contr I'\del I)(J)=\varphi(J\concat B_{\varphi\contr I'}\concat B_{\varphi\del I}). \]
We define $\hat\varphi:E^{n-\rk(\Ma)}\to\FF$ to be the unique Grassman-Plücker function such that, for any subset $J$ of $E$ of size $n-\rk(\Ma)$,
\[ \hat\varphi(J)=\sgn(J\concat\compl J)\varphi(\compl J), \]
where $\sgn(J\concat\compl J)$ denotes the signature of the permutation $J\concat\compl J$. Moreover, we choose
\[ B_{\hat\varphi\del I}:=I\setminus B_{\varphi\contr I}\quad\text{and}\quad B_{\hat\varphi\contr I}:=I\setminus B_{\varphi\del I}. \]

As stated in~\cite{BB16}, the four new functions are Grassman-Plücker functions, and the underlying matroids are respectively $\Ma\del I$, $\Ma\contr I$, $\Ma\contr I'\del I$ and $\hat\Ma$. Moreover, the functions are independent of the chosen bases up to a nonzero factor (up to a sign for $\hat\varphi$). We also keep the usual rules $\hat{\hat{\varphi\phantom{\,}}}=(-1)^{r(n-r)}\varphi$ and $\hat{\varphi\contr I}=\pm\hat\varphi\del I$. Article~\cite{BB16} refers to another article,~\cite{AD12}, for the proofs. The latter proves the results in the special case of complex matroids. For the sake of completeness, we provide a proof in full generality.

\begin{proof}
Many points are easy to check. We will only check the Grassman-Plücker relations, the independency up to a factor, and the formula $\hat{\varphi\contr I}=\pm\hat\varphi\del I$.

First, we prove the Grassman-Plücker relations for the contraction or the deletion of one element. For some cases, for example the case of the contraction of a loop, there is nothing to prove. Otherwise, for any $e\in E$, $I=(i_1,\dots,i_{r-1})\in E^{r-1}, J=(j_1,\dots,j_{r-1})\in E^{r-1}$ and $\ell\in\zint1{r-1}$,
\begin{multline*}
\bighp_{m=1}^{r-1} \varphi(i_1,\dots,i_{m-1},j_\ell,i_{m+1},\dots,i_{r-1},e)\varphi(j_1,\dots,j_{\ell-1},i_m,j_{\ell+1},\dots,j_{r-1},e) \\
\hp \varphi(i_1,\dots,i_{r-1},j_\ell)\varphi(j_1,\dots,j_{\ell-1},e,j_{\ell+1},\dots,j_{r-1},e) \hp - \varphi(I\concat(e))\varphi(J\concat(e))\ni0.
\end{multline*}
Since
\[ \varphi(j_1,\dots,j_{\ell-1},e,j_{\ell+1},\dots,j_{r-1},e)=0, \]
we get the Grassman-Plücker relation for $\varphi\del\{e\}$ and for $\varphi\contr\{e\}$. The Grassman-Plücker relations for a general $\varphi\del I$ follow easily by induction using that
\begin{align*}
\varphi\del I &= \varphi\del B_{\varphi\del I} \\
  &= (\dots((\varphi\del \{b_s\})\del \{b_{s-1}\})\dots)\del \{b_1\},
\end{align*}
where $B_{\varphi\del I}=(b_1, \dots, b_s)$. A similar argument works for $\varphi\contr I$ and $\varphi\contr I'\del I$.

\medskip

For the independency, let us first do a simple case. Let $e, e'\in E$ be two parallel elements and let $J, J'\subset E$ of size $r-1$. Applying the Grassman-Plücker function to $J\concat (e)$ and $J'\concat (e')$ for $\ell=r$, most of the terms will be zero because
\[ \varphi(\dots,e',\dots,e)=0. \]
Then, we will get
\begin{equation} \label{5:eqn:up_to_a_factor_one_element}
\varphi(J\concat (e'))\varphi(J'\concat (e))=\varphi(J\concat (e))\varphi(J'\concat (e')).
\end{equation}

Let $I\subset E$. Let $J,J'\subset I$ of size $\rk(I)$ and $K,K'\subset\compl I$ of size $r-\rk(I)$. Let us prove that
\begin{equation} \label{5:eqn:up_to_a_factor}
\varphi(J\concat K)\varphi(J'\concat K')=\varphi(J'\concat K)\varphi(J\concat K').
\end{equation}
First, by~\eqref{5:eqn:equivalence_bases}, $\varphi(J\concat K)$ is nonzero if and only if $J\in\Bas(\Ma\mrest I)$ and $K\in\Bas(\Ma\contr I)$. Thus, either $J,J'\in\Bas(\Ma\mrest I)$ and $K,K'\in\Bas(\Ma\contr I)$, and then both members of~\eqref{5:eqn:up_to_a_factor} are nonzero, or both members are zero, and then~\eqref{5:eqn:up_to_a_factor} is verified. In the first case, assume that $J\neq J'$. By the basis exchange property, one can find $e\in J$ and $e'\in J'$ such that $e\neq e'$ and $J_1:=J-e+e'$ is a basis of $\Ma\mrest I$. In $\Ma\mrest I\contr (J-e)=\Ma\contr (J-e)\mrest I$, elements $e$ and $e'$ are parallel. Thus, they are parallel in $\Ma\contr (J-e)$. Hence, using Formula~\eqref{5:eqn:up_to_a_factor_one_element} for $\varphi\contr(J-e)$, we get
\[
(\varphi\contr(J\!-\!e))(K\concat (e))\;(\varphi\contr(J\!-\!e))(K'\concat (e'))=(\varphi\contr(J\!-\!e))(K'\concat (e))\;(\varphi\contr(J\!-\!e))(K\concat (e')). \]
Being careful with signs, we deduce
\begin{gather*}
\varphi(K\concat J)\varphi(K'\concat J_1)=\varphi(K'\concat J)\varphi(K\concat J_1),\text{ thus} \\
\frac{\varphi(K\concat J)}{\varphi(K'\concat J)}=\frac{\varphi(K\concat J_1)}{\varphi(K'\concat J_1)}.
\end{gather*}
Using $\card{J'\setminus J}$ times the basis exchange property, we get
\[ \frac{\varphi(K\concat J)}{\varphi(K'\concat J)}=\frac{\varphi(K\concat J')}{\varphi(K'\concat J')}, \]
which completes the proof of~\eqref{5:eqn:up_to_a_factor}. As a direct consequence, we get the independency up to a factor of $\varphi\contr I$ and $\varphi\mrest I$.

\medskip

It remains to prove that, if $J\subset\compl I$ is of size $\rk(\hat\varphi\del I)$, then
\[ (\hat{\varphi\contr I})(J)=\pm(\hat\varphi\del I)(J). \]
Then, recalling that we chose $B_{\hat\varphi\del I}=I\setminus B_{\varphi\contr I}$, we get
\begin{align*}
(\hat\varphi\del I)(J)
  &=\hat\varphi(J\concat B_{\hat\varphi\del I}) \\
  &=\pm\varphi((\compl I\setminus J)\concat B_{\varphi\contr I}) \\
  &=\pm(\varphi\contr I)(\compl I\setminus J) \\
  &=\pm(\hat{\varphi\contr I})(J). \qedhere
\end{align*}

\end{proof}

\medskip

We now explain how to generalize Symanzik polynomials with parameters. Let $\varphi'$ be a Grassman-Plücker function on some set $E\cup W$ with $W$ a finite set disjoint from $E$. Assume that $W$ is independent in $\Ma_{\varphi'}$. Suppose that $\varphi=\varphi'\del W$ and notice that there is only one way to define $\varphi'\del W$. We call $\varphi'\!\contr\,W$ the \emph{quotient of $\varphi$ by W}. It is uniquely defined. Then we define
\[ \Sym_k(\varphi; W; x):=\Sym_k(\varphi'\!\contr\,W; x). \]
Once more, this extends Definition~\ref{5:defi:Symanzik_polynomials_with_parameters}.

\subsection{Deletion-contraction formula}

Thanks to the definitions of last subsections, we can state the generalization of the deletion-contraction formula (\cf Equation~\eqref{5:eqn:deletion_contraction_graph} in the introduction). From those definitions, the following result is immediate.

\begin{proposition}[Deletion-contraction formula] \label{5:prop:deletion_contraction}
Let $e\in E$. We set
\[ N_1(e):=\begin{cases}
0 & \text{if $e$ is a loop in $\Ma$,} \\
x_e & \text{otherwise,}
\end{cases}
\quad\text{and}\quad
N_2(e):=\begin{cases}
0 & \text{if $e$ is free in $\Ma$,} \\
1 & \text{otherwise.}
\end{cases} \]
Then,
\[ \Kir_k(\varphi; x)=N_1(e)\Kir_k(\varphi\contr \{e\}; x_{E-e})+N_2(e)\Kir_k(\varphi\del \{e\}; x_{E-e}). \]
Moreover, setting
\[ \hat N_1(e):=\begin{cases}
0 & \text{if $e$ is a loop in $\Ma$,} \\
1 & \text{otherwise,}
\end{cases}
\quad\text{and}\quad
\hat N_2(e):=\begin{cases}
0 & \text{if $e$ is free in $\Ma$,} \\
x_e & \text{otherwise,}
\end{cases} \]
we have
\[ \Sym_k(\varphi; x)=\hat N_1(e)\Sym_k(\varphi\contr \{e\}; x_{E-e})+\hat N_2(e)\Sym_k(\varphi\del \{e\}; x_{E-e}). \]
\end{proposition}

The notations of the previous proposition follows those of~\cite{DFM18}. This article describes Universal Tutte characters which unify objects behaving like Tutte polynomials. By the following proposition, we show that the polynomials $\Kir_k$ and $\Sym_k$ belong to this big family. In particular, we have the following formul\ae.

\begin{proposition}
\label{5:prop:Tutte_character}
Let $I\subset E$. We extend $N_1$ and $N_2$ by
\[ N_1(\varphi\mrest I)=\begin{cases}
0 & \text{if $\Ma\mrest I$ is not the free matroid,} \\
\prod_{i\in I}x_i & \text{otherwise,}
\end{cases}
\qquad
N_2(\varphi\contr I)=\begin{cases}
0 & \text{if $\rk(I)>0$,} \\
1 & \text{otherwise.}
\end{cases} \]
Then,
\[ \Kir_k(\varphi; x)=\sum_{I\subset E}N_1(\varphi\mrest I)\tau(\varphi\mrest I\contr I)N_2(\varphi\contr I), \]
where, denoting by $\varnothing$ the $0$-tuple,
\[ \tau(\varphi\mrest I\contr I):=(\varphi\mrest I\contr I)(\varnothing)^k. \]
Moreover,
\[ \hat N_1(\varphi\mrest I)=\begin{cases}
0 & \text{if $\Ma\mrest I$ is not the free matroid,} \\
1 & \text{otherwise,}
\end{cases}
\qquad
\hat N_2(\varphi\contr I)=\begin{cases}
0 & \text{if $\rk(I)>0$,} \\
\prod_{i\in \compl I}x_i & \text{otherwise,}
\end{cases} \]
we have
\[ \Sym_k(\varphi; x)=\sum_{I\subset E}\hat N_1(\varphi\mrest I)\tau(\varphi\mrest I\contr I)\hat N_2(\varphi\contr I). \]
\end{proposition}

One might have noticed that $\hat N_2$ in Proposition~\ref{5:prop:Tutte_character} is not an extension of $\hat N_2$ in Proposition~\ref{5:prop:deletion_contraction}. Actually, $\hat N_2(e)$ must be understood as $\hat N_2(\varphi\contr\compl{\{e\}})$ (\cf \cite{DFM18}).

To understand why this last proposition holds, notice that a term of the right-hand side sum is zero if $I$ is not a basis of $\Ma$. If $I$ is a basis, then, for the Kirchhoff polynomial for instance, we have
\[ N_1(\varphi\mrest I)\tau(\varphi\mrest I\contr I)N_2(\varphi\contr I)=\prod_{i\in I}x_i\cdot(\varphi\mrest I\contr I)(\varnothing)^k. \]
Since $I$ is a basis, $B_{\varphi\mrest I}=\varnothing$ and $B_{\varphi\contr I}=I$. Thus,
\[ \varphi\mrest I\contr I(\varnothing)=\varphi(I). \]

\begin{remark}
Following this idea, one might define the Tutte polynomial of $\varphi$ by
\[ \sum_{I\subset E}(x-1)^{\rk(E)-\rk(I)}(y-1)^{\card I-\rk(I)}\varphi\mrest I\contr I(\varnothing)^k. \]
Unfortunately, this formula strongly depends on the bases of $B_{\varphi\mrest I}$ and $B_{\varphi\contr I}$ (unless for terms where $I$ is a basis).
\end{remark}

\subsection{Partial factorization}

Let us conclude this section by a generalization of the useful formula of partial factorization (\cf Equation~\eqref{5:eqn:partial_factorization} in the introduction).

Let
\[ P(x)=\sum_J p_Jx^J \]
be a multivariate polynomial. Let $I$ be a subset of $\zint1n$. We define the set of the degrees of the monomials of $P$ with respect to the variables $x_I$ as
\begin{gather*}
\degs_{x_I}(P(x)):=\{\card{J\cap I}\st p_J\neq0\}.
\end{gather*}

Here is the generalization of the formula.

\begin{proposition} \label{5:prop:partial_factorization}
Let $I\subset E$. Then
\[ \Kir_k(\varphi; x)=\frac1{\varphi\mrest I\contr I(\varnothing)^k}\Kir_k(\varphi\mrest I; x_I)\Kir_k(\varphi\contr I; x_{\compl I})+R_{\varphi,I}(x) \]
for some polynomial $R_{\varphi,I}(x)$ verifying
\begin{gather*}
\max\degs_{x_I}(R_{\varphi,I}(x))<\deg(\Kir_k(\varphi\mrest I; x_I))=\rk(I), \\
\min\degs_{x_{\compl I}}(R_{\varphi,I}(x))>\deg(\Kir_k(\varphi\contr I; x_{\compl I}))=\rk(\Ma\contr I).
\end{gather*}

Similarly,
\[ \Sym_k(\varphi; x)=\frac1{\varphi\mrest I\contr I(\varnothing)^k}\Sym_k(\varphi\mrest I; x_I)\Sym_k(\varphi\contr I; x_{\compl I})+\hat R_{\varphi,I}(x), \]
for some polynomial $\hat R_{\varphi,I}(x)$ verifying
\begin{gather*}
\min\degs_{x_I}(\hat R_{\varphi,I}(x))>\deg(\Sym_k(\varphi\mrest I; x_I)), \\
\max\degs_{x_{\compl I}}(\hat R_{\varphi,I}(x))<\deg(\Sym_k(\varphi\contr I; x_{\compl I})).
\end{gather*}
\end{proposition}

\begin{proof}
We have
\[ \Kir_k(\varphi; x)=\sum_{B\in\Bas(\Ma)}\varphi(B)^kx_I^{B\cap I}x_{\compl I}^{B\cap\compl I}. \]
If $B\in\Bas(\Ma)$, then, by~\eqref{5:eqn:contraction_deletion_bases}, $\card{B\cap\compl I}$ is minimal if and only if $\card{B\cap I}$ is maximal if and only if $B\cap I$ is a basis of $\Ma\mrest I$. By~\eqref{5:eqn:two_out_of_three_bases}, we can cut the sum in two parts in the following way:
\[ \Kir_k(\varphi; x)=\sum_{B'\in\Bas(\Ma\mrest I)}\sum_{B''\in\Bas(\Ma\contr I)}\varphi(B'\concat B'')^kx_I^{B'}x_{\compl I}^{B''} + R_{\varphi,I}(x), \]
where
\[ R_{\varphi,I}(x):=\sum_{\substack{B\in\Bas(\Ma) \\ \card{B\cap I}<\rk(I)}}\varphi(B)^kx^B. \]
The sum is of degree $\rk(I)$ in $x_I$ and $\rk(\Ma\contr I)$ in $x_{\compl I}$. Moreover, the degree of each term of $R_{\varphi, I}(x)$ is less than $\rk(I)$ in $x_I$ and greater than $\rk(\Ma\contr I)$ in $x_{\compl I}$.

We also have
\[ \Kir_k(\varphi\mrest I; x_I)\Kir_k(\varphi\contr I; x_{\compl I})=\sum_{B'\in\Bas(\Ma\mrest I)}(\varphi\mrest I)(B')^kx_I^{B'}\sum_{B''\in\Bas(\Ma\contr I)}(\varphi\contr I)(B'')^kx_{\compl I}^{B''}. \]
Hence, it remains to prove that if $B'\in\Bas(\Ma\mrest I)$ and if $B''\in\Bas(\Ma\contr I)$, then
\[ (\varphi\mrest I)(B')\,(\varphi\contr I)(B'')=\pm(\varphi\mrest I\contr I)(\varnothing)\,\varphi(B'\concat B''). \]
This is equivalent to
\[ \varphi(B'\concat B_{\varphi\mrest I})\varphi(B''\concat B_{\varphi\contr I})=\pm\varphi(B_{\varphi\contr I}\concat B_{\varphi\mrest I})\varphi(B'\concat B''). \]
But this is just a consequence of~\eqref{5:eqn:up_to_a_factor}. The proof for Symanzik polynomials is similar.
\end{proof}

\section{Exchange graph for matroids}
\label{5:sec:exchange_graph}

This section could seem out of context: we will not talk about Symanzik polynomials. However, we need Corollary~\ref{5:cor:exchange_graph} below in the next section. Theorem~\ref{5:thm:exchange_graph} and its corollaries are interesting combinatorial results about connected components of what we call the exchange graph of a matroid. These results generalize Theorem 2.13 of~\cite{Ami19} to the matroids, and they go further in the study of the exchange graph.

Notations and reminders about matroids can be found in Section~\ref{5:subsec:matroids}. In this section, we fix a matroid $\Ma=(\E, \Ind)$. We set $r:=\rk(\Ma)$. If $I\subset\E$, $\Fr(I)$ will denote the set of elements independent from $I$, \ie, the complement of $\cl(I)$ in $\E$.

We are interested in finding the different connected components of some interesting subgraphs of the exchange graph of a matroid we define right below.

\begin{definition} \label{5:defi:exchange_graph}
The \emph{exchange graph} $\G=(\V,\gE)$ associated to $\Ma$ is the graph with vertex set $\V:=\Ind\times\Ind$ and edge set $\gE$ such that two vertices $(I_1,I_2)$ and $(I'_1, I'_2)$ are adjacent if there exists $i\in\E$ such that either, $I'_1=I_1+i$ and $I'_2=I_2-i$, or, $I'_1=I_1-i$ and $I'_2=I_2+i$.
\end{definition}

We fix $\G=(\V, \gE)$ the exchange graph of $\Ma$. If $\U$ is a subset of $\V$, then $\G[\U]$ denotes the induced subgraph of $\G$ with vertex set $\U$, \ie, the subgraph of $\G$ of vertex set $\U$ and edge set all the edges connecting pairs of vertices in $\U$.
If $p,q\in\zint0r$, then we set $\V_{p,q}:=\Ind_p\times\Ind_q$.
Moreover, if $p\not=0$ and $q\not=r$, we define the bipartite graph $\G_{p,q}:=\G[\V_{p,q}\sqcup\V_{p-1,q+1}]$.

\begin{remark} \label{5:rem:exchange_graph_isomorphism}
If $p\in\zint{1 }r$ and $q\in\zint0{r-1}$, then we have a natural graph isomorphism:
\[ \begin{array}{rrl}
\Phi_{p,q}: & \G_{p,q} \xrightarrow{\ \sim\ } & \G_{q+1,p-1}, \\
 & (I_1,I_2)\in\V_{p,q}\sqcup\V_{p-1,q+1} \xmapsto{\phantom{\ \sim\ }} & (I_2,I_1)\in\V_{q,p}\sqcup\V_{q+1,p-1}.
\end{array} \qedhere \]
\end{remark}

There are two important invariants in connected components of $\G$. They correspond to Definitions~\ref{5:defi:multicup} and~\ref{5:defi:MCP} below.

\begin{definition} \label{5:defi:multicup}
If $I, J$ are two non necessarily disjoint sets, we write $I\multicup J$ for the multiset containing elements of $I$ and, disjointly, elements of $J$ (such that elements in $I\cap J$ appear in $I\multicup J$ with multiplicity $2$).
\end{definition}

Abusing notation, if $(U,V)$ and $(I,J)$ are two ordered pairs of sets, then we write $(U,V)\subset(I,J)$ if $U\subset I$ and $V\subset J$. That defines a partial order on ordered pairs of sets.

\begin{definition} \label{5:defi:CDP}
If $(I, J)\in\Ind\times\Ind$ is an ordered pair of independent sets and if $(U,V)$ is another one, we say that $(U,V)$ is a \emph{codependent pair of $(I,J)$} if $(U,V)\subset(I,J)$ and $\cl(U)=\cl(V)$.
\end{definition}

\begin{claim}
Let $(I,J)\in\Ind\times\Ind$ be an ordered pair of independent sets. Let $(U,V), (U', V')\in\Ind\times\Ind$ be two codependent pairs of $(I,J)$. Then $(U\cup U',V\cup V')$ is a codependent pair of $(I,J)$.
\end{claim}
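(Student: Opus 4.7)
The plan is as follows. The inclusion $(U\cup U',V\cup V')\subset(I,J)$ is immediate from $U,U'\subset I$ and $V,V'\subset J$, so the only content is the equality $\cl(U\cup U')=\cl(V\cup V')$.

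I would isolate the following general fact about any closure operator (and in particular about the matroid closure introduced just before the claim): for all subsets $A,B\subset\E$,
\[ \cl(A\cup B)=\cl(\cl(A)\cup B). \]
The $\subset$ direction is monotonicity applied to $A\cup B\subset\cl(A)\cup B$. For the $\supset$ direction, note that $\cl(A)\subset\cl(A\cup B)$ by monotonicity and $B\subset\cl(A\cup B)$ by extensivity, so $\cl(A)\cup B\subset\cl(A\cup B)$; applying $\cl$ and using idempotence gives $\cl(\cl(A)\cup B)\subset\cl(A\cup B)$.

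Once this lemma is in hand, the proof becomes a short chain of rewrites using $\cl(U)=\cl(V)$ and $\cl(U')=\cl(V')$:
\[ \cl(U\cup U')=\cl(\cl(U)\cup U')=\cl(\cl(V)\cup U')=\cl(V\cup U')=\cl(V\cup\cl(U'))=\cl(V\cup\cl(V'))=\cl(V\cup V'). \]
Each equality is an instance of the lemma above or of the assumption that the two input pairs are codependent, so there is essentially no obstacle: the claim is really just the observation that the equivalence relation \enquote{has the same closure as} is preserved by taking unions, which in turn is a formal consequence of the closure axioms and requires nothing matroid-specific beyond what is recalled in the preamble of this section.
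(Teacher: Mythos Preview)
Your proof is correct and follows essentially the same approach as the paper: the paper also dispenses with the inclusion in one word and then writes the chain $\cl(U\cup U')=\cl(\cl(U)\cup\cl(U'))=\cl(\cl(V)\cup\cl(V'))=\cl(V\cup V')$, using the identity $\cl(A\cup B)=\cl(\cl(A)\cup\cl(B))$ without spelling out its proof. Your version just applies the one-sided lemma $\cl(A\cup B)=\cl(\cl(A)\cup B)$ twice rather than both sides at once, which makes the chain longer but is the same argument.
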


\begin{proof}
Clearly $(U\cup U',V\cup V')\subset(I,J)$. And,
\[ \cl(U\cup U')=\cl(\cl(U)\cup\cl(U'))=\cl(\cl(V)\cup\cl(V'))=\cl(V\cup V'). \qedhere \]
\end{proof}

Thanks to the previous claim, and noticing that $(\emptyset, \emptyset)$ is always a codependent pair, one can state the following definition.

\begin{definition} \label{5:defi:MCP}
If $(I, J)\in\Ind\times\Ind$ is an ordered pair of independent sets, we call \emph{maximal codependent pair (or MCP)} of $I$ and $J$, denoted by $\MCP(I,J)$, the unique maximal codependent pair of $(I,J)$ for the inclusion.
\end{definition}

\begin{proposition} \label{5:prop:invariant}
If $(I,J)\in\V$, then $I\multicup J$ and $\MCP(I,J)$ are invariant in the connected component of $(I,J)$ in $\G$, \ie, if $(I',J')$ is any element of the connected component of $(I,J)$, then
\[ I\multicup J=I'\multicup J'\myand \MCP(I,J)=\MCP(I',J'). \]
\end{proposition}

\begin{proof}
Let $(I,J)$ and $(J',I')$ be two neighbors in $\V$ such that there exists $i\in\E$ such that $I'=J+i$, and $I=J'+i$. One has
\[ I\multicup J=(J'+i)\multicup J=J'\multicup(J+i)=J'\multicup I'. \]
Moreover, if $(U,V):=\MCP(I,J)$, since $i\in\Fr(J)$, $i\not\in\cl(V)$, and so $i\not\in U$. Thus, $\MCP(I,J)=\MCP(I-i,J)=\MCP(J',J)$. Using the same argument on $(J',I')$, one obtains the equality $\MCP(I,J)=\MCP(J',I')$. The end of the proof is now straightforward.
\end{proof}

Let us fix $p\in\zint{ 1}r$ and $q\in\zint0{r-1}$. The nice result is that the two invariants of Proposition~\ref{5:prop:invariant} form a complete set of invariants (see Corollary~\ref{5:cor:exchange_graph_2}). This is also true for $\G_{p,q}$ if we ignore isolated vertices. That is why we first study which vertices are isolated.

\begin{proposition} \label{5:prop:isolated_vertices}
Let $(I,J)$ be a vertex of $\G$. Then:
\begin{enumerate}
\item $(I,J)$ is an isolated vertex of $\G$ if and only if $\cl(I)=\cl(J)$, \ie, $\MCP(I,J)=(I,J)$;
  \label{5:prop:isolated_vertices:isolated_G}
\item if $(I,J)\in\G_{p,q}$, then $(I,J)$ is an isolated vertex of $\G_{p,q}$ if and only if one of the two following cases happens.
\begin{itemize}
  \item either $(I,J)\in\V_{p,q}$ and $I\subset\cl(J)$,
  \item or $(I,J)\in\V_{p-1,q+1}$ and $J\subset\cl(I)$;
\end{itemize} \label{5:prop:isolated_vertices:isolated_Gpq}
\item if $p=q+1$, $\G_{p,q}$ has no isolated vertex.
  \label{5:prop:isolated_vertices:not_isolated}
\end{enumerate}
\end{proposition}

\begin{proof}
Let $(I,J)\in\V_{p,q}$. The neighborhood of $(I,J)$ in $\G_{p,q}$ consists of all vertices of the form $(I-i,J+i)$ for $i\in I\cap\Fr(J)$. Thus, Point~\eqref{5:prop:isolated_vertices:isolated_Gpq} is clear. Then, $(I,J)$ is isolated in $\G$ if and only if it is isolated in $\G_{p,q}$ and in $\G_{p+1,q-1}$ (if $p\neq r$ and $q\neq0$). By Point~\eqref{5:prop:isolated_vertices:isolated_Gpq}, this is equivalent to $I\subset\cl(J)$ and $J\subset\cl(I)$, \ie, to $\cl(I)=\cl(J)$ (notice that the second inclusion is trivial if $p=r$ or $q=0$). Finally, if $p>q$, $\rk(I)>\rk(J)$, thus $I$ cannot be a subset of $\cl(J)$. Thus, if $p=q+1$, we can get Point~\eqref{5:prop:isolated_vertices:not_isolated} from Point~\eqref{5:prop:isolated_vertices:isolated_Gpq}.
\end{proof}

Let us introduce another definition below. Actually, it is equivalent to Definition~\ref{5:defi:MCP}.

\begin{definition} \label{5:defi:CFE}
Let $(I,J)\in\V_{p,q}$ and let $\W$ be the connected component of $(I,J)$ in $\G_{p,q}$. Then we define the \emph{ordered pair of fixed elements of $(I,J)$} by
\[ \PFE(I,J):=\bigg(\bigcap_{(U,V)\in\W} U,\ \bigcap_{(U,V)\in\W} V\bigg). \qedhere \]
\end{definition}

\begin{proposition} \label{5:prop:MCP_CFE}
If $(I,J)$ is a non-isolated vertex of $\G_{p,q}$, then
\[ \MCP(I,J)=\PFE(I,J). \]
\end{proposition}

We need the following lemma which is a direct consequence of Proposition 1.1.6 of~\cite{Oxl11}.

\begin{lemma} \label{5:lem:matroid_cycles}
If $U\in\Ind$ and $i\in\cl(U)$, then $\{C\subset U \st i\in\cl(C)\}$ admits a least element for the inclusion.
\end{lemma}

\begin{proof}[Proof of the proposition]
Let $(P_1,P_2):=\PFE(I,J)$. Denote by $\W$ the set of vertices in the connected component of $(I,J)$ in $\G_{p,q}$ and by $\W_{p,q}$, \resp by $\W_{p-1,q+1}$, the intersection $\W\cap\V_{p,q}$, \resp $\W\cap\V_{p-1,q+1}$. By Proposition~\ref{5:prop:invariant}, it is clear that $\MCP(I,J)\subset\PFE(I,J)$.

In order to prove the other inclusion, we introduce the two following sets
\[ Q_1:=\bigcap_{(J_1,I_1)\in \W_{p-1,q+1}}\cl(J_1),\qquad Q_2:=\bigcap_{(I_2,J_2)\in \W_{p,q}}\cl(J_2). \]
Clearly,
\begin{equation} \label{5:eqn:exchange_graph:P_subset_Q}
P_1\subset Q_1\myand P_2\subset Q_2.
\end{equation}

The interesting property of these sets is that
\begin{equation} \label{5:eqn:exchange_graph:same_components}
Q_1=Q_2.
\end{equation}
In order to prove this, it is enough to show that any element of $\E$ which is not in $Q_2$, is not in $Q_1$ either. Let $i$ be an element of $\E$ which is not in $Q_2$. There exists $(I_2,J_2)\in\W_{p,q}$ such that $i\not\in\cl(J_2)$. If $i\not\in\cl(I_2)$, then $i\not\in\cl(J_1)$ for any neighbor $(J_1,I_1)$ of $(I_2,J_2)$, and so $i\not\in Q_1$. Otherwise, assume $i\in\cl(I_2)$. By Lemma~\ref{5:lem:matroid_cycles}, there exists a least element for the inclusion $C\subset I_2$ such that $i\in\cl(C)$. Since $i\not\in\cl(J_2)$, we have $C\not\subset\cl(J_2)$. Let $j\in C\cap\Fr(J_2)$. One obtains that $(I_2-j,J_2+j)\in\W_{p-1,q+1}$, and that $i\not\in\cl(I_2-j)$ since $C\not\subset I_2-j$. Thus, $i\not\in Q_1$.

The third result is
\begin{equation} \label{5:eqn:exchange_graph:Q_subset_cl_P}
Q_2\subset\cl(P_2).
\end{equation}
To see this, let $i\in Q_2$. Let $(J_1, I_1)$ be a neighbor of $(I,J)$. One has $i\in\cl(J)$, and so $i\in\cl(I_1)$ (since $J\subset I_1$). Thus, by Lemma~\ref{5:lem:matroid_cycles}, one can choose $C$, \resp $C_1$, minimal for the inclusion in $J$, \resp $I_1$, such that $i$ is in the closure of $C$, \resp $C_1$. By minimality, $C=C_1$. Hence, by connectivity, for all $(U,V)\in\W$, $C\subset V$. Thus, we have $C\subset P_2$, and so $i\in\cl(P_2)$.

We now have all the needed intermediate results. Equations~\eqref{5:eqn:exchange_graph:P_subset_Q},~\eqref{5:eqn:exchange_graph:same_components} and~\eqref{5:eqn:exchange_graph:Q_subset_cl_P} imply that $P_1\subset Q_1=Q_2\subset\cl(P_2)$, and so $\cl(P_1)\subset\cl(P_2)$. Using a symmetric argument, we obtain that $\cl(P_2)\subset\cl(P_1)$ and so $\cl(P_1)=\cl(P_2)$. Thus, $(P_1,P_2)$ is a dependent ordered pair of $(I,J)$. In particular, $(P_1,P_2)=\PFE(I,J)\subset\MCP(I,J)$.
\end{proof}

In what follow, we will use the following lemma, which is easy to prove.

\begin{lemma} \label{5:lem:same_rank_independents}
Let $U, U'\in\Ind$ be such that $\card{U}=\card{U'}$ and $\Fr(U)\not=\Fr(U')$. Then, $U\cap\Fr(U')$ and $U'\cap\Fr(U)$ are nonempty.
\end{lemma}

We can now state the main theorem of this section.

\begin{theorem} \label{5:thm:exchange_graph}
Let $(I,J)$ and $(I',J')$ be two non-isolated vertices of $\G_{p,q}$. Then $(I,J)$ and $(I',J')$ are in the same connected component of $\G_{p,q}$ if and only if
\[ I\multicup J = I' \multicup J'\qquad\text{and}\qquad\MCP(I, J) = \MCP(I', J'). \]
\end{theorem}

\begin{proof}
The forward direction is given by Proposition~\ref{5:prop:invariant}.

Let $(I_0,J_0)$ and $(I_0',J_0')$ be two non-isolated vertices of $\G_{p,q}$ such that $I_0\multicup J_0=I_0'\multicup J_0'$ and $\MCP(I_0, J_0)=\MCP(I'_0, J'_0)$. Denote by $\W$, \resp $\W'$, the set of vertices in the connected component of $(I_0,J_0)$, \resp of $(I_0',J_0')$, in $\G_{p,q}$. Define $\W_{p,q}:=\W\cap\V_{p,q}$, and define similarly $\W_{p-1,q+1}, \W'_{p,q}, \W'_{p-1,q+1}$. Since both vertices are assumed to be non-isolated, the four previous sets are nonempty. If $(I,J)$ and $(I',J')$ are two elements of $\V_{p,q}$, we set
\[ d((I,J), (I',J')):=\card{I\setminus I'}=p-\card{I\cap I'}. \]
Now, set $(I,J)\in\W_{p,q}$ and $(I',J')\in\W'_{p,q}$ such that $d((I,J), (I',J'))$ is minimal. Set $d:=d((I,J), (I',J'))$. We must show that $d=0$.

For the sake of a contradiction, assume that $d\not=0$. Set $(P_1,P_2):=\PFE(I,J)$ and $(P'_1,P'_2):=\PFE(I',J')$. Assume for now the following equation.
\begin{equation} \label{5:eqn:exchange_graph:remain_to_show}
\forall (\~I,\~J)\in\W_{p,q}, J\setminus J'\subset \~J.
\end{equation}
First, if $(\~J,\~I)\in\W_{p-1,q+1}$, one can find a neighbor $(\~I_1,\~J_1)\in\W_{p,q}$ of $(\~J,\~I)$. We have $J\setminus J'\subset\~J_1\subset\~I$. Thus, one can replace $\W_{p,q}$ by $\W$ in~\eqref{5:eqn:exchange_graph:remain_to_show}, \ie,
\[ J\setminus J'\subset P_2. \]
Secondly, since $d\neq0$, $J\neq J'$. Hence
\[ J\setminus J'\neq\emptyset. \]
Thirdly, $P'_2\subset J'$. Thus
\[ (J\setminus J')\cap P'_2=\emptyset. \]
Finally, the three last equations implies that $P_2\neq P_2'$. Therefore $\PFE(I,J)\neq\PFE(I',J')$. By Proposition~\ref{5:prop:MCP_CFE}, $\MCP(I,J)\neq\MCP(I',J')$. Applying Proposition~\ref{5:prop:invariant}, $\MCP(I_0,J_0)\neq\MCP(I_0',J_0')$. This contradicts our assumption.

\bigskip

It remains to prove Equation~\eqref{5:eqn:exchange_graph:remain_to_show}. We note first that
\begin{equation} \label{5:eqn:exchange_graph:same_intersection}
I\cap J=I'\cap J'.
\end{equation}
This can be easily deduced from $I\multicup J=I'\multicup J'$.

Next, we have
\begin{equation} \label{5:eqn:exchange_graph:common_hyperplan}
\Fr(J)=\Fr(J').
\end{equation}
To see this, suppose for the sake of a contradiction that this equation is false. By Lemma~\ref{5:lem:same_rank_independents}, there exists an $i\in J\,\cap\,\Fr(J')$. One can see that $i\in I'$, because $i\in (I\cup J)\setminus J'$. Moreover, let $j\in \Fr(I'-i)\,\cap\, I$, which exists because $\rk(I'-i)<\rk(I)$. Similarly, $j\in J'+i$. One obtains that $(I'-i+j,J'+i-j)\in\W_{p,q}'$ and $(I'-i+j)\,\cap\, I=(I'\,\cap\, I)+j$, because $j\in I$ and $i\not\in I$ (otherwise one would have $i\in I\,\cap\, J$, and so $i\in I'\,\cap\, J'$ by~\eqref{5:eqn:exchange_graph:same_intersection}; but $i\in\Fr(J')$). The last equality contradicts minimality of $d$.

Now set $i\in\Fr(J)\,\cap\, I$, which exists by Proposition~\ref{5:prop:isolated_vertices},~\eqref{5:prop:isolated_vertices:isolated_Gpq}. One has directly by~\eqref{5:eqn:exchange_graph:common_hyperplan} that $i\in\Fr(J')$, and so $i\in I\,\cap\, I'$. Then we prove that
\begin{align} \label{5:eqn:exchange_graph:common_hyperplan_2}
\Fr(I-i)=\Fr(I'-i).
\end{align}
Otherwise, by Lemma~\ref{5:lem:same_rank_independents}, one could set $j\in(I'-i)\,\cap\,\Fr(I-i)$ and $j'\in(I-i)\,\cap\,\Fr(I'-i)$. But $(I-i+j,J+i-j)\in\W_{p,q}$ and $(I'-i+j',J'+i-j')\in\W_{p,q}'$. Moreover $(I-i+j)\,\cap\,(I'-i+j')=(I\,\cap\, I')-i+j+j'$. Once more, that contradicts minimality of $d$.

If $(J_1, I_1)$ is a neighbor of $(I,J)$, and if $(I_2,J_2)$ is a neighbor of $(J_1,I_1)$, then there exists an $i\in\Fr(J)$ such that $I_1=J+i$, and there exists a $j\in\Fr(J_1)=\Fr(I-i)$ such that $I_2=I-i+j$. By~\eqref{5:eqn:exchange_graph:common_hyperplan}, $i\in\Fr(J')$, and, by~\eqref{5:eqn:exchange_graph:common_hyperplan_2}, $j\in\Fr(I'-i)$. Setting $(I_2',J_2'):=(I'-i+j,J'+i-j)$, one obtains that $(I_2',J_2')\in \W_{p,q}'$. First observe that
\begin{gather}
\label{5:eqn:exchange_graph:propagation_1} J_2\setminus J_2'=(J+i-j)\setminus(J'+i-j)=J\setminus J', \\
\label{5:eqn:exchange_graph:propagation_2} I_2\setminus I_2'=(I-i+j)\setminus(I'-i+j)=I\setminus I'.
\end{gather}
Hence, $d((I_2,J_2),(I'_2,J'_2))=d$ is minimal. We infer that this two vertices also verifies Equations~\eqref{5:eqn:exchange_graph:same_intersection},~\eqref{5:eqn:exchange_graph:common_hyperplan} and~\eqref{5:eqn:exchange_graph:common_hyperplan_2}.

Now let $(\~I,\~J)\in\W_{p,q}$. Since $(I_2,J_2)$ has been arbitrarily chosen among neighbors of neighbors of $(I,J)$, one can in the same way choose a path in $\W$:
\[ ((I,J),(J_1,I_1),(I_2,J_2),(J_3,I_3),\dots,(I_n,J_n)=(\~I,\~J)). \]
To this path corresponds a path in $\W'$:
\[ ((I',J'),(J'_1,I'_1),(I'_2,J'_2),(J'_3,I'_3),\dots,(I'_n,J'_n)). \]
Set $(\~I',\~J'):=(I'_n,J'_n)$. Equations~\eqref{5:eqn:exchange_graph:same_intersection},~\eqref{5:eqn:exchange_graph:common_hyperplan},~\eqref{5:eqn:exchange_graph:common_hyperplan_2},~\eqref{5:eqn:exchange_graph:propagation_1} and~\eqref{5:eqn:exchange_graph:propagation_2} propagate along the paths. In particular, $\~J\setminus\~J'=J\setminus J'$. Hence,
\[ J\setminus J'\subset\~J. \]
We conclude that~\eqref{5:eqn:exchange_graph:remain_to_show} is true, which prove the theorem.
\end{proof}

This first corollary will be useful in Section~\ref{5:sec:variation}.

\begin{corollary} \label{5:cor:exchange_graph}
Let $(I,J), (I',J')$ be two arbitrary vertices of $\G_{r,r-1}$. Then $(I,J)$ and $(I',J')$ are in the same connected component of $\G_{r,r-1}$ if and only if
\[ I\multicup J = I' \multicup J'\qquad\text{and}\qquad\MCP(I, J) = \MCP(I', J'). \]
\end{corollary}

\begin{proof}
It suffices to combine Theorem~\ref{5:thm:exchange_graph} with Point~\eqref{5:prop:isolated_vertices:not_isolated} of Proposition~\ref{5:prop:isolated_vertices}.
\end{proof}

The second corollary extends Theorem~\ref{5:thm:exchange_graph} to the case of the whole exchange graph.

\begin{corollary} \label{5:cor:exchange_graph_2}
Let $(I,J)$, $(I',J')$ be two arbitrary vertices of $\G$. Then $(I,J)$ and $(I',J')$ are in the same connected component of $\G$ if and only if
\[ I\multicup J = I' \multicup J'\qquad\text{and}\qquad\MCP(I, J) = \MCP(I', J'). \]
\end{corollary}

\begin{proof}
The forward direction is given by Proposition~\ref{5:prop:invariant}. For the other direction, let $(I,J)$ and $(I',J')$ be two vertices of $\G$ such that $I\multicup J = I' \multicup J'$ and $\MCP(I, J) = \MCP(I', J')$. Let
\begin{gather*}
p:=\left\lceil\frac{\card I+\card J}2\right\rceil=\left\lceil\frac{\card{I'}+\card{J'}}2\right\rceil, \\
q:=\left\lfloor\frac{\card I+\card J}2\right\rfloor=\left\lfloor\frac{\card{I'}+\card{J'}}2\right\rfloor.
\end{gather*}
Note that $p-q$ is either zero or one. If $\card J>\card I$, then $J\cap\Fr(I)$ is nonempty. Let $j$ be an element of this set. Then $(I+j, J-j)$ is adjacent to $(I,J)$. Iterating this process, it is clear that, if $\card J>\card I$, the connected component of $(I,J)$ contains a vertex of $\G_{p,q}$. Actually, this is still true if $\card J\leq\card I$ putting elements of $I$ in $J$ and stopping at the right time. Let $(\~I,\~J)$, \resp $(\~I',\~J')$, be an element of $\G_{p,q}$ in the connected component of $(I,J)$, \resp of $(I',J')$. We have $\~I\multicup\~J = \~I'\multicup\~J'$ and $\MCP(\~I,\~J)=\MCP(\~I',\~J')$. Thus, $(\~I,\~J)$ and $(\~I',\~J')$ are connected in $\G_{p,q}$, thus in $\G$, provided they are not isolated in $\G_{p,q}$. There are two possibilities.

If $p=q+1$, then they cannot be isolated by Proposition~\ref{5:prop:isolated_vertices}~\eqref{5:prop:isolated_vertices:not_isolated}. Otherwise, $p=q$. In this case, suppose, for example, that $(\~I,\~J)$ is isolated in $\G_{p,q}$. Then, by Proposition~\ref{5:prop:isolated_vertices}~\eqref{5:prop:isolated_vertices:isolated_Gpq}, $\~I\subset\cl(\~J)$. But, since $\card{\~J}=\card{\~I}$, $\cl(\~J)=\cl(\~I)$, thus $\MCP(\~I,\~J)=(\~I,\~J)$, and so $\MCP(\~I',\~J')=(\~I,\~J)$. Looking at cardinalities, this last equality implies that $(\~I,\~J)=(\~I',\~J')$.

In every case, $(\~I,\~J)$ and $(\~I',\~J')$ are in the same connected component, so are $(I,J)$ and $(I',J')$.
\end{proof}

\begin{remark} \label{5:rem:White_s_conjecture}
Let $I\subset\E$ be a subset of size $l\geq r$. We call $I$ a \emph{coindependent of rank $l$} if $\compl I$ is an independent of the dual of $\Ma$. Let $\J$ be the set containing all the independents and all the coindependents. One can extend $\G$ setting $\V:=\J\times\J$ and defining the edges in a similar way to Definition~\ref{5:defi:exchange_graph}. Then Conjecture 1.8. of~\cite{Bla08} is equivalent to the conjectural extension of Corollary~\ref{5:cor:exchange_graph} to $\G_{r,r}$. This conjecture is a part of the White's conjecture about toric ideals of matroids.
\end{remark}

Now we arrive to the last section of this paper which generalizes Theorem 1.1 of~\cite{Ami19}.

\section{Variation of Symanzik rational fractions}
\label{5:sec:variation}

In this section, if $Y, \fF\in\mhyp^k_n(\R)$ are any two hypercubic matrices of order $k$ and of size $n$, we set, for $k+1$ families $u, u_{(1)}, \dots, u_{(k)}$ of size $p$ in $\R^n$,
\[ (\mat u_{(1)}, \dots, \mat u_{(k)})_{k,y}:=\det(Y\mmult 1U_{(1)}\cdots\mmult kU_{(k)}), \qquad\norm{\mat u}_{k,y}:=\sqrt[k]{(\mat u, \dots, \mat u)_{k,y}}, \]
where, in this section, $(u,\dots,u)_{k,y}$ will always be nonnegative.
Moreover, we use the same definition replacing $Y$ by $Y+\fF$ and $y$ by $y+\f$. In the rest of the section, we will omit $k$ from this notation. In particular, $\norm{\wcdot}_y:=\norm{\wcdot}_{k,y}$.

Let $n, p$ be two positive integers, let $u$ be a family of size $n$ of elements of $\R^p$, and let $r$ be its rank. We set $\Ma:=\Ma_u$ the matroid associated to $u$. Let $v$ be such that $v^\tran$ is a basis of $\ker(U)$. Let $\beta\in\vect{u}$ be any nonzero vector, $\alpha\in\R^n$ be such that $U\alpha=\beta$ and $w:=(v^\tran\concat(\alpha))^\tran$. Moreover, we set $\Delta$ a simplicial complex of dimension $d>0$ with $n$ facets and $p$ $(d-1)$-dimensional faces.

In this last section, we state a nice property of Symanzik rational fractions defined in Definition~\ref{5:defi:Symanzik_rational_fractions}. One can roughly states it as \enquote{a bounding deformation of the metric of a simplicial complex only implies a uniformly bounded variation of the Symanzik rational fraction with one parameter}, where uniformly means that the bound does not depend on the chosen metric.

We can extend Propositions~\ref{5:prop:Symanzik_determinantal_formula} and~\ref{5:prop:Symanzik_with_parameters_determinantal_formula} to get
\[ \ratSym_k(u;(\beta);y)=\frac{\norm{\mat w^\tran}_y^k}{\norm{\mat v^\tran}_y^k}. \]
If $U$ is the $d$-th incidence matrix of the simplicial complex $\Delta$ of dimension $d$, we have seen that it is natural to assign the volume of the $i$-th facet of $\Delta$ to $y_i$, for each $i\in\zint1n$. That is why we will deform the metric of $\Delta$ by slightly perturbing $y$.

Let $k$ be any even positive integer. Let $\U$ be some space and $\fF:\U\to\mhyp^k_n(\R)$ be a bounded map (\ie, each entry is bounded).
Let $y_1, \dots, y_n:\U\to\R_+$ be $n$ functions and let
\[ \begin{array}{rrcl}
Y: & \U & \longrightarrow & \mhyp^k_n(\R), \\
  & t & \longmapsto & \diag^k(y_1(t), \dots, y_n(t)).
\end{array} \]
Suppose that $(\mat v,\dots,\mat v)_{y(t)+\f(t)}$ is positive for all $t\in \U$. This is always true for sufficiently large $y_1, \dots, y_n$.

If $\phi$ and $\psi$ are two functions from $\U$ to $\R$, then the notation $\phi=\O_y(\psi)$ means that there exist two positive constants $c$ and $C$ such that, for all $t\in\U$, $y_1(t), \dots, y_n(t)\geq C$ implies that $|\phi(t)|\leq c|\psi(t)|$. Similarly, the notation $\phi=\o_y(\psi)$ means that, for all positive real $\epsilon$, there exists a positive real $C_\epsilon$ such that, for all $t\in\U$, $y_1(t), \dots, y_n(t)\geq C_\epsilon$ implies that $|\phi(t)|\leq\epsilon|\psi(t)|$.

\begin{theorem} \label{5:thm:variation}
With the above notations, we have
\[ \frac{\norm{\mat w^\tran}_y^k}{\norm{\mat v^\tran}_y^k}-\frac{\norm{\mat w^\tran}_{y+\f}^k}{\norm{\mat v^\tran}_{y+\f}^k}=\O_y(1). \]
\end{theorem}

Much of the rest of this section is devoted to the proof of this theorem. The proof essentially follows that of Theorem 1.1 in~\cite{Ami19}. Here are the main steps of the proof. First we find an equivalent statement where the left-hand member is the difference of two polynomials (Claim~\ref{5:claim:variation_1}, and beyond). Then we partially develop the polynomials in order to treat each term separately. Next, we introduce a graph $\vG$ similar to the exchange graph $\G$ of Section~\ref{5:sec:exchange_graph}. Each term corresponds to a vertex of $\vG$. Claim~\ref{5:claim:variation} states some properties of the terms which can be seen on the graph. For instance, if (the vertices corresponding to) two terms are linked, then the difference between them are negligible (more exactly, one has to multiply one of the term by an explicit constant for this to be true). Thus, if a term is negligible, all terms in its connected components are negligible. For other terms, the idea is to use Section~\ref{5:sec:exchange_graph} to associate bijectively each such term with another term in its connected component that compensates the first one (Claim~\ref{5:claim:variation_2}). Before that, Claim~\ref{5:claim:variation_3} explicits the link between $\vG$ and $\G$.

Let us set the following functions
\[ f_1:=\norm{\mat v^\tran}_y^k, \qquad f_2:=\norm{\mat w^\tran}_y^k, \qquad g_1:=\norm{\mat v^\tran}_{y+\f}^k, \qquad g_2:=\norm{\mat w^\tran}_{y+\f}^k. \]

Let $e$ be the canonical basis of $\R^n$. Decomposing along the standard orthonormal basis, we obtain
\begin{align*}
g_1
 & = \norm{\mat v^\tran}_{y+\f}^k \\
 & = \sum_{\substack{K_1,\dots,K_k\subset\zint1n \\ \card{K_1}=\dots=\card{K_k}=n-r}}(\mat v^\tran, \mat e_{K_1})\cdots(\mat v^\tran, \mat e_{K_k})(\mat e_{K_1}, \dots, \mat e_{K_k})_{y+\f}.
\end{align*}
Remind Remark~\ref{5:rem:non_zero_determinants} about a condition for $\det(V_K)$ and $\det(W_L)$ to be nonzero. The previous formula can be rewritten in the form
\begin{equation}
g_1 = \sum_{I_1,\dots,I_k\in\Ind_r}\det(V_{\compl{I_1}})\cdots\det(V_{\compl{I_k}})(\mat e_{\compl{I_1}}, \dots, \mat e_{\compl{I_k}})_{y+\f}. \label{5:eqn:variation:g_1}
\end{equation}
In the same way,
\begin{gather*}
g_2 = \sum_{J_1,\dots,J_k\in\Ind_{r-1}}\det(W_{\compl{J_1}})\cdots\det(W_{\compl{J_k}})(\mat e_{\compl{J_1}}, \dots, \mat e_{\compl{J_k}})_{y+\f}, \\
f_1=\sum_{I\in\Ind_r}\det(V_{\compl I})^ky^{\compl I}, \\
f_2=\sum_{J\in\Ind_{r-1}}\det(W_{\compl J})^ky^{\compl J}.
\end{gather*}

Notice that $f_1$ is a homogeneous polynomial of $\R[y]$ of degree $n-r$ and that all its coefficients are positive. Moreover all coefficients of $g_1$ and $g_2$ as polynomials of $\R[y]$ are bounded.

If $I$ is a subset of $\zint1n$ and if $h\in\R[y]$, let us denote by $[y^I]h$ the coefficient of the monomial $y^I$ in $h$.
For example, if $I\in\Ind_r$, the monomial $y^{\compl I}$ of $g_1$ is only present in the term where all $I_i$s are equal to $I$. Thus
\[ [y^{\compl I}]g_1 = [y^{\compl I}]\Big(\det(V_{\compl I})^k\norm{\mat e_{\compl I}}_{y+\f}^k\Big) = \det(V_{\compl I})^k. \]
We deduce that these coefficients are constant, and that for all $I\in\Ind$,
\begin{equation} \label{5:eqn:variation:same_dominant_monomials}
[y^{\compl I}]g_1 = [y^{\compl I}]f_1.
\end{equation}

The statements of the theorem is that $f_2/f_1-g_2/g_1=\O_y(1)$. Let us simplify this statement thanks to the following claim.
\begin{claim} \label{5:claim:variation_1}
We have
$g_1-f_1=\o_y(f_1)$.
\end{claim}

\begin{proof}[Proof of the claim]
By~\eqref{5:eqn:variation:same_dominant_monomials}, $g_1-f_1$ is a polynomial of degree at most $n-r-1$. Moreover, its coefficients are bounded functions. Let $K\subset\zint1n$ be an arbitrary subset such that $[y^K](g_1-f_1)$ is nonzero. Then, $\card K<n-r$. Since $f_1$ is homogeneous of degree $n-r$, $[y^K](g_1-f_1)=[y^K]g_1$. From~\eqref{5:eqn:variation:g_1} we infer that $K\subset\compl{I_1}\cap\dots\cap\compl{I_k}$ for some sets $I_1, \dots, I_k\in\Ind_r$. Thus, $K\subsetneq\compl{I_1}$. It happens that $[y^{\compl{I_1}}]f_1=\det(V_{\compl{I_1}})^k$ is a positive integer. Since $[y^K](g_1-f_1)$ is bounded,
\[ \Big([y^K](g_1-f_1)\Big)y^K=\o_y(y^{\compl{I_1}}). \]
Since all coefficients of $f_1$ are positive, we can sum all terms of $(g_1-f_1)$, and then conclude the proof.
\end{proof}

Multiplying $f_2/f_1-g_2/g_1$ by $f_1g_1$, and using the claim, it remains to show that
\[ g_1f_2-f_1g_2=\O_y(f_1^2). \]
Notice that the monomials with nonzero coefficients in $f_1^2$ are exactly the monomials of the form $y^{\compl I}y^{\compl{I'}}$ where $(I,I')\in\Ind_r\times\Ind_r$.

Let us rewrite
\[ g_1(t)f_2(t)=\sum_{I_1,\dots,I_k\in\Ind_r}\sum_{J\in\Ind_{r-1}}s(I_1,\dots,I_k,J)h(I_1,\dots,I_k,J;t), \]
where
\[ h(I_1,\dots,I_k,J;t):=(\mat e_{\compl{I_1}}, \dots, \mat e_{\compl{I_k}})_{y+\f}y^{\compl J} \]
is a polynomial whose coefficients are functions, and
\[ s(I_1,\dots,I_k,J)=\det(V_{\compl{I_1}})\cdots\det(V_{\compl{I_k}})\det(W_{\compl J})^k \]
is a real number.
Similarly,
\[ f_1(t)g_2(t)=\sum_{J_1,\dots,J_k\in\Ind_{r-1}}\sum_{I\in\Ind_r}s(J_1,\dots,J_k,I)h(J_1,\dots,J_k,I;t), \]
where
\begin{gather*}
h(J_1,\dots,J_k,I;t):=(\mat e_{\compl{J_1}}, \dots, \mat e_{\compl{J_k}})_{y+\f}y^{\compl I}, \\
s(J_1,\dots,J_k,I):=\det(W_{\compl{J_1}})\cdots\det(W_{\compl{J_k}})\det(V_{\compl I})^k.
\end{gather*}
We will not develop the polynomials further.

It is clear that
\begin{align}
h(K_1,\dots,K_k, L;t) &= \O_y(y^{\compl K_1\cap\dots\cap\compl K_n}y^{\compl L}) \nonumber \\
 &= \O_y(y^{\compl{(K_1\cup\dots\cup K_n)}}y^{\compl L}). \label{5:eqn:variation:h_O_xi}
\end{align}

\medskip

Let us define a new graph which is slightly similar to the exchange graph $\G_{r,r-1}$ of $\Ma$. Let $\vG=(\vV, \vE)$ be a bipartite graph with vertex set $\vV=\vV_{r-1,r}\sqcup\vV_{r,r-1}$ and edge set $\vE$ where
\begin{gather*}
\vV_{r-1,r}:=(\Ind_{r-1})^k\times\Ind_r, \\
\vV_{r,r-1}:=(\Ind_r)^k\times\Ind_{r-1},
\end{gather*}
and where two vertices $(J_1,\dots,J_k,I)\in\vV_{r-1,r}$ and $(I_1,\dots,I_k,J)\in\vV_{r,r-1}$ are connected by an edge if and only if there exists $i\in\Fr(J_1)\cap\dots\cap\Fr(J_k)\cap I$ such that $I=J+i$ and $I_l=J_l+i$, for all $l\in\zint1k$.

One can now see $h$ and $s$ as some functions on $\vG$. Moreover
\begin{align*}
g_1(t)f_2(t)=\sum_{\va\in\vV_{r,r-1}}s(\va)h(\va;t), \\
g_2(t)f_1(t)=\sum_{\va\in\vV_{r-1,r}}s(\va)h(\va;t).
\end{align*}
Thus, each vertex of $\vG$ corresponds to a part of the difference between both polynomials.

Let $\epsilon$ be define as in Lemma~\ref{5:lem:duality_with_signs}. Let $\va=(I_1, \dots, I_k, J)\in\vV_{r,r-1}$ and $\vb=(J_1, \dots, J_k, I)$ be two adjacent vertices of $\vG$. Let $i\in\zint1n$ be such that $I=J+i$. We define
\[ \eta_{\va, \vb}:=\eta_{\vb,\va}:=\epsilon(\compl{I_1}\concat i)\cdots\epsilon(\compl{I_k}\concat i), \]
where we omitted the brackets around $i$.

\begin{definition}
A vertex $(J_1,\dots,J_k,I)$ in $\vV_{r-1,r}$ is said \emph{ordinary} if $\Fr(J_1)=\dots=\Fr(J_k)$. A vertex $(I_1,\dots,I_k,J)$ in $\vV_{r,r-1}$ is said \emph{ordinary} if $I_1\cap\Fr(J)=\dots=I_k\cap\Fr(J)$. A vertex of $\vV$ which is not ordinary is called \emph{special}.
\end{definition}

\begin{claim} \label{5:claim:variation}
Here are some properties of $h$ and $s$.
\begin{enumerate}
\item If $\va$ and $\vb$ are two adjacent vertices of $\vG$, then $h(\va;t)-\eta_{\va,\vb}h(\vb;t)=\O_y(f_1^2)$. \label{5:claim:variation:adjacent}
\item If $\va$ is a special vertex of $\vV$, then $h(\va;t)=\O_y(f_1^2)$. \label{5:claim:variation:special}
\item Let $\vb:=(J_1,\dots,J_k,I)\in\vV_{r-1,r}$ be an ordinary vertex and let $\va:=(I_1,\dots,I_k,J)\in\vV_{r,r-1}$ be one of its neighbors. If $\det(W_{\compl{J_1}})\neq0$ then, for all $\ell\in\zint1k$,
\[ \frac{\det(V_{\compl{I_\ell}})}{\det(V_{\compl{I_1}})} = \frac{\epsilon(\compl{I_\ell}\concat i)\det(W_{\compl{J_\ell}})}{\epsilon(\compl{I_1}\concat i)\det(W_{\compl{J_1}})}, \]
where $i$ verifies $I=J+i$. \label{5:claim:variation:normal_adjacent}
\item With the notations of the third point, if $s(\va)$ and $s(\vb)$ are nonzero, then
\[ \frac{s(\va)}{\det(V_{\compl I_1})^k\det(W_{\compl J})^k}=\eta_{\va,\vb}\frac{s(\vb)}{\det(W_{\compl J_1})^k\det(V_{\compl I})^k}. \] \label{5:claim:variation:normal_adjacent_2}
\end{enumerate}
\end{claim}

\begin{proof}
\begin{enumerate}
\item Let $\va=(J_1,\dots,J_k,I)\in\vV_{k-1,k}$ and $\vb=(I_1,\dots,I_k,J)$ be two adjacent vertices. Let $i\in\zint1n$ be such that $I=J+i$. Let us extract $y_i$ from $h(\va;t)$ and from $h(\vb;t)$. One has
\begin{align*}
h(I_1,\dots,I_k,J;t)
 &=(\mat e_{\compl{I_1}}, \dots, \mat e_{\compl{I_k}})_{y+\f}y^{\compl J} \\
 &=\Big((\mat e_{\compl{I_1}}, \dots, \mat e_{\compl{I_k}})_{y+\f}y^{\compl I}\Big)y_i,
\end{align*}
and, using the Laplace cofactor expansion along the column of the determinant which contains $y_i$,
\begin{align*}
h(J_1,\dots,J_k,I;t)
 &=(\mat e_{\compl{J_1}}, \dots, \mat e_{\compl{J_k}})_{y+\f}y^{\compl I} \\
 &=\epsilon(\compl{I_1}\concat i)\cdots\epsilon(\compl{I_k}\concat i)(\mat e_{\compl{I_1}}\wedge e_i, \dots, \mat e_{\compl{I_k}}\wedge e_i)_{y+\f}y^{\compl I} \\
 &=\Big(\eta_{\va,\vb}(\mat e_{\compl{I_1}}, \dots, \mat e_{\compl{I_k}})_{y+\f}y_i+\O_y(y^{\compl{I_1}\cap\dots\cap\compl{I_k}})\Big)y^{\compl I}.
\end{align*}
Thus,
\begin{align*}
h(\va;t)-\eta_{\va,\vb}h(\vb;t)
 &= \O_y(y^{\compl{I_1}\cap\dots\cap\compl{I_k}}y^{\compl I}) \\
 &= \O_y(y^{\compl{I_1}}y^{\compl I}).
\end{align*}
The monomial $y^{\compl{I_1}}y^{\compl I}$ is present in $f_1^2$ with a positive coefficient. Thus,
\[ h(\va;t)-\eta_{\va,\vb}h(\vb;t)=\O_y(f_1^2). \]

\item Since there are two kinds of special vertices, we will make two cases.

Let $\va=(J_1,\dots,J_k,I)$ be a special vertex of $\vV_{r-1,r}$. Assume, without loss of generality, that $\Fr(J_1)\neq\Fr(J_2)$. We have seen in Equation~\eqref{5:eqn:variation:h_O_xi} that
\[ h(\va;t)=\O_y(y^{\compl{(J_1\cup\dots\cup J_k)}}y^{\compl I}). \]
By Lemma~\ref{5:lem:same_rank_independents}, $\Fr(J_1)\neq\Fr(J_2)$ implies that there exists $j\in J_1\cap\Fr(J_2)$. Since $\rk(J_2)=r-1$, the set $I':=J_2+j$ is in $\Ind_r$. But $I'\subset J_1\cup\dots\cup J_k$, and so
\[ y^{\compl{(J_1\cup\dots\cup J_k)}}=\O_y(y^{\compl{I'}}). \]
Then,
\[ h(\va;t)=\O_y(y^{\compl{I'}}y^{\compl I})=\O_y(f_1^2). \]

\smallskip

In the same way, if $\va=(I_1,\dots,I_k,J)$ is a special vertex of $\vV_{r,r-1}$, we have
\[ h(\va;t)=\O_y(y^{\compl{(I_1\cup\dots\cup I_k)}}y^{\compl J}). \]
Assume, without loss of generality, that there exists an element $i$ in $(I_1\cap\Fr(J))\setminus(I_2\cap\Fr(J))$. One has
\[ i\not\in\compl{(I_1\cup\dots\cup I_k)}, \qquad \compl{(I_1\cup\dots\cup I_k)}\subset\compl{I_2}, \qquad i\in\compl{I_2}. \]
This implies $\compl{(I_1\cup\dots\cup I_k)}+i\subset\compl{I_2}$. Moreover, the set $I:=J+i$ is in $\Ind_r$. One obtains
\begin{align*}
y^{\compl{(I_1\cup\dots\cup I_k)}}y^{\compl J}
 &= y^{\compl{(I_1\cup\dots\cup I_k)}+i}y^{\compl J-i} \\
 &= \O_y(y^{\compl{I_2}}y^{\compl I}).
\end{align*}
Finally,
\[ h(\va;t)=\O_y(f_1^2). \]

\item Let $\~w:=w\concat((0,\dots,0, -1))$ and $\~u$ such that $\~u^\tran$ is a basis of $\vect{(u\concat(\beta))^\tran}$. Notice that $\~U{\~W}^\tran=0$. The statement is equivalent to
\[ \frac{\det(\~W_{\compl{I_\ell}+(n+1)})}{\det(\~W_{\compl{I_1}+(n+1)})} = \frac{\epsilon(\compl{I_\ell}\concat i)\det(\~W_{\compl{J_\ell}})}{\epsilon(\compl{I_1}\concat i)\det(\~W_{\compl{J_1}})}. \]
The families $\~u$ and $\~w$ verify the conditions of Lemma~\ref{5:lem:duality_with_signs}. Applying the lemma, last equation is equivalent to
\begin{equation} \label{5:eqn:claim:variation:normal_adjacent}
\frac{\epsilon((\compl{I_\ell}+(n+1))\concat I_\ell)\det(\~U_{I_\ell})}{\epsilon((\compl{I_1}+(n+1))\concat I_1)\det(\~U_{I_1})}=\frac{\epsilon(\compl{I_\ell}\concat i)\epsilon(\compl{J_\ell}\concat(J_\ell+(n+1)))\det(\~U_{J_\ell+(n+1)})}{\epsilon(\compl{I_1}\concat i)\epsilon(\compl{J_1}\concat(J_1+(n+1)))\det(\~U_{J_1+(n+1)})}.
\end{equation}
We can make some simplifications. For example,
\[ \epsilon((\compl{I_\ell}+(n+1))\concat I_\ell)=(-1)^{\card{I_\ell}}\epsilon(\compl{I_\ell}\concat I_\ell\concat(n+1))=(-1)^{\card{I_\ell}}\epsilon(\compl{I_\ell}\concat I_\ell). \]
After simplification, we get
\[ \frac{\epsilon(\compl{I_\ell}\concat I_\ell)\epsilon(J_\ell\concat i)\det(\~U_{J_\ell}\concat\~U_i)}{\epsilon(\compl{I_1}\concat I_1)\epsilon(J_1\concat i)\det(\~U_{J_1}\concat\~U_i)}=\frac{\epsilon(\compl{I_\ell}\concat i\concat J_\ell)\det(\~U_{J_\ell+(n+1)})}{\epsilon(\compl{J_1}\concat i\concat J_1)\det(\~U_{J_1+(n+1)})}. \]
As $\vb$ is ordinary, $\cl(J_\ell)=\cl(J_1)$. Thus there exists $P\in\M_{r-1}(\R)$ such that $U_{J_\ell}=U_{J_1}P$. Hence, setting
\[ \~P=\pbmatrixV{P}{\begin{smallmatrix}0 \\ \,\rotatebox{90}{$\begin{smallmatrix}\cdots\end{smallmatrix}$} \\ 0\end{smallmatrix}}{\begin{smallmatrix}0 & \cdots & 0\end{smallmatrix}}{\begin{smallmatrix}1\end{smallmatrix}}, \]
one obtains $\~U_{J_\ell}\concat\~U_i=(\~U_{J_1}\concat\~U_i)\~P$ and $\~U_{J_\ell+(n+1)}=\~U_{J_1+(n+1)}\~P$. So, both ratios equal $\det(P)$ up to a sign.

We now compute the sign of the numerators:
\begin{align*}
\epsilon(\compl{I_\ell}\concat I_\ell)\epsilon(J_\ell\concat i)\epsilon(\compl{I_\ell}\concat i\concat J_\ell)
 &= \epsilon(\compl{I_\ell}\concat i\concat J_\ell)\epsilon(i\concat J_\ell)\epsilon(J_\ell\concat i)\epsilon(\compl{I_\ell}\concat i\concat J_\ell) \\
 &= (-1)^{r-1}.
\end{align*}
The signs of the numerators and the denominators simplify, which concludes the proof.

\item Since $s(\va)$ and $s(\vb)$ are nonzero, $\det(W_{\compl J})$ and $\det(W_{\compl{J_1}})$ are nonzero too. Thus we can apply Point~\eqref{5:claim:variation:normal_adjacent} for all $\ell\in\zint1k$. Multiplying left-hand members and right-hand members, we get the equality of Point~\eqref{5:claim:variation:normal_adjacent_2}. \qedhere
\end{enumerate}
\end{proof}

Let $\CC(\vG)$ be the set of connected components of $\vG$. If $\vH=(\vV',\vE')\in\CC(\vG)$, we set
\begin{gather*}
\phantom{\myand}
\vV'_{r-1,r}:=\vV'\cap\vV_{r-1,r}\myand \\
\vV'_{r,r-1}:=\vV'\cap\vV_{r,r-1}.
\end{gather*}
Moreover, we denote by $\OCC(\vG)$ the set of \emph{ordinary connected components} of $\vG$ that only contain ordinary vertices. Let $\SCC(\vG):=\CC(\vG)\setminus\OCC(\vG)$ be the set of \emph{special connected components} of $\vG$.

The equation we wanted to show, namely,
\[ g_1f_2-g_2f_1=\O_y(f_1^2), \]
is equivalent to
\[ \sum_{(\vV',\vE')\in\CC(\vG)}\bigg(\sum_{\va\in\vV'_{r,r-1}}s(\va)h(\va;t)-\sum_{\va\in\vV'_{r-1,r}}s(\va)h(\va;t)\bigg)=\O_y(f_1^2). \]

In the above sum, we can remove the special connected components because of Points~\eqref{5:claim:variation:special} and~\eqref{5:claim:variation:adjacent} of Claim~\ref{5:claim:variation}. Thus, it remains to show
\[ \sum_{(\vV',\vE')\in\OCC(\vG)}\bigg(\sum_{\va\in\vV'_{r,r-1}}s(\va)h(\va;t)-\sum_{\va\in\vV'_{r-1,r}}s(\va)h(\va;t)\bigg)=\O_y(f_1^2). \]

Actually, we will prove that, for all $\vH=(\vV',\vE')\in\OCC(\vG)$,
\begin{equation} \label{5:eqn:variation:compensation}
\sum_{\va\in\vV'_{r,r-1}}s(\va)h(\va;t)-\sum_{\va\in\vV'_{r-1,r}}s(\va)h(\va;t)=\O_y(f_1^2).
\end{equation}

\medskip

We now see that any ordinary connected component of $\vG$ is naturally isomorphic to a connected component of (a subgraph of) the exchange graph of $\Ma$. This will allow us to applied the results of Section~\ref{5:sec:exchange_graph}. More precisely, for each $\va\in\vV'_{r,r-1}$, we will find $\vb\in\vV'_{r-1,r}$ in the same connected component such that
\[ s(\va)h(\va; t)-s(\vb)h(\vb; t)=\O_y(f_1^2). \]

Fix a connected component $\vH=(\vV',\vE')$ in $\OCC(\vG)$. Let $\G=(\V,\gE)$ be the exchange graph of $\Ma$, as defined in Section~\ref{5:sec:exchange_graph}. We define the following projection:
\[ \begin{array}{rrcl}
\pi: & \vV & \to & \V_{r,r-1}\sqcup\V_{r-1,r}, \\
 & (K_1,\dots,K_k,L) & \mapsto & (K_1,L).
\end{array} \]
Let $\H:=\G[\pi(\vV')]$ be the induced subgraph of $\G$ with vertex set the image of $\vV'$ by $\pi$. Let $\V'$ be its vertex set and $\gE'$ be its edge set. We have the following claim.

\begin{claim} \label{5:claim:variation_3}
$\H$ is a connected component of $\G_{r,r-1}$, and the map $\pi$ induces an isomorphism of graphs between $\vH$ and $\H$.
\end{claim}

\begin{proof}
First we prove that $\pi$ is injective on $\vH$. Notice that, as for $\G$, for every $l\in\zint1k$, $K_l\multicup L$ is invariant in the connected components of $\vG$, \ie, if $\va=(K_1,\dots,K_k,L)$ and $\vb=(K'_1,\dots,K'_k,L')$ are two vertices in $\vH$, then $K_l\multicup L=K'_l\multicup L'$ for all $l\in\zint1k$. Thus, if we know $\va$, we can retrieves $\vb$ only knowing $L'$. But $L'$ is encoded in $\pi(\vb)$. That concludes the injectivity. Thus $\pi$ induces a bijection between $\vV'$ and $\V'$.

Next, we prove that $\pi$ induces a natural bijection between $\vE'$ and edges of $\G_{r,r-1}$ which are incident to a vertex of $\H$. Let $\ve\in\vE'$ and let $\va$ and $\vb$ be its two endpoints. Using the definitions, it is clear that $\pi(\va)$ and $\pi(\vb)$ are linked.

Reciprocally, let $e$ be an edge of $\G_{r,r-1}$ which is incident to a vertex $a$ of $\V'$. Let $\va\in\vV'$ such that $\pi(\va)=a$. Let $b$ be the other endpoint of $e$. We want to show that there exists $\vb$ in $\vV$ such that $\pi(\vb)=b$ and that $\va$ and $\vb$ are connected by an edge. There are two cases.
\begin{itemize}
\item If $\va=(J_1,\dots,J_k,I)\in\vV_{r-1,r}$, then $a=(J_1,I)$. There exists $i\in\Fr(J_1)$ such that $b=(J_1+i,I-i)$. Since $\va$ is an ordinary vertex, $i\in\Fr(J_1)$ implies that $i\in\Fr(J_l)$ for all $l\in\zint1k$. Thus, $\vb:=(J_1+i, \dots, J_r+i, I-i)$ is a neighbor of $\va$ such that $\pi(\vb)=b$.
\item If $\va=(I_1,\dots,I_k,J)\in\vV_{r,r-1}$, then $a=(I_1,J)$. There exists $i\in\Fr(J)\cap I_1$ such that $b=(I_1-i,J+i)$. Since $\va$ is a special vertex, $i\in\Fr(J)\cap I_l$ for all $l\in\zint1k$. Finally, $\vb:=(I_1-i, \dots, I_k-i, J+i)$ is a neighbor of $\va$ in $\vH$, and $\pi(\vb)=b$.
\end{itemize}

We have proved that $\pi$ induces a natural bijection between vertices of $\vH$ and vertices of $\H$, and between edges of $\vH$ and edges of $\G_{r,r-1}$ which are incident to a vertex of $\H$. Thus, the claim is true.
\end{proof}
Let us denote by $\pi':\vH\to\H$ the isomorphism induced by $\pi$.

\medskip

We now study a second bijection. The well-definiteness will be justified in Claim~\ref{5:claim:variation_2} below. Set
\[ \begin{array}{rrcl}
\Phi: & \V'_{r,r-1} & \to & \V'_{r-1,r}, \\
 & (I,J) & \mapsto & \Big(A\cup(J\setminus B), B\cup(I\setminus A)\Big),
\end{array} \]
where $(A,B)$ is the MCP of any vertex of $\H$.
This map induces a map on $\vH$:
\[ \begin{array}{rrcl}
\~\Phi: & \vV'_{r,r-1} & \to & \vV'_{r-1,r}, \\
 & \va & \mapsto & \pi'^{-1}\circ\Phi\circ\pi'(\va).
\end{array} \]

\begin{claim} \label{5:claim:variation_2}
$\Phi$ and $\~\Phi$ have the following properties.
\begin{enumerate}
\item $\Phi$ and $\~\Phi$ are well-defined, and both are bijections.
\item If $\va\in\vV'_{r,r-1}$, then $s(\va)h(\va;t)-s(\~\Phi(\va))h(\~\Phi(\va);t)=\O_y(f_1^2)$.
\end{enumerate}
\end{claim}

\begin{proof} We prove the two points independently.
\begin{enumerate}
\item It is enough to show the first point for $\Phi$. Let $(I,J)\in\V'_{r,r-1}$. Let $(A,B):=\MCP(I,J)$. Let $(J',I'):=\Big(A\cup(J\setminus B), B\cup(I\setminus A)\Big)$. We want to apply Corollary~\ref{5:cor:exchange_graph} in order to show that $(J',I')\in\V'$. In the definition of $(J',I')$, the unions are disjoint. Indeed, if $i\in A\cap J$, \resp $i\in B\cap I$, then $(\{i\},\{i\})$ is a codependent pair of $(I,J)$, and so $i\in B$, \resp $i\in A$. Thus
\[ (A\sqcup(J\setminus B))\multicup(B\sqcup(I\setminus A))=I\multicup J. \]
Set $(A',B'):=\MCP(J',I')$. Clearly $(A,B)\subset(A',B')$. Moreover
\begin{align*}
\cl(A') &= \cl(A\sqcup(A'\setminus A)) \\
 &= \cl(\cl(A)\sqcup(A'\setminus A)) \\
 &= \cl(\cl(B)\sqcup(A'\setminus A)) \\
 &= \cl(B\sqcup(A'\setminus A)).
\end{align*}
Similarly,
\[ \cl(B')=\cl(A\sqcup(B'\setminus B)). \]
But $B\sqcup(A'\setminus A)\subset B\sqcup(J'\setminus A)=J$ and $A\sqcup(B'\setminus B)\subset I$. Since
\[ \cl(A\sqcup(B'\setminus B))=\cl(B')=\cl(A')=\cl(B\sqcup(A'\setminus A)), \]
$\Big(A\sqcup(B'\setminus B),B\sqcup(A'\setminus A)\Big)$ is a codependent pair of $(I,J)$. Thus, it is included in $(A,B)$, and so $A'\setminus A$ and $B'\setminus B$ are empty. Finally,
\[ I\multicup J=J'\multicup I'\myand\MCP(I,J)=\MCP(J',I'). \]
We can apply Corollary~\ref{5:cor:exchange_graph}, and we obtain $(J',I')\in\V'_{r-1,r}$. Thus, $\Phi$ is well-defined.

One can easily retrieve $(I,J)$ from $(J',I')$ by $(I,J):=(A\cup(I'\setminus B), B\cup(J'\setminus A))$. Thus, $\Phi$ is a bijection. So is $\~\Phi$.

\item Let $\va=(I,I_2,\dots,I_k,J)\in\vV_{r,r-1}$ be a vertex and let $\vb=(J',J_2,\dots,J_k,I')\in\vV_{r-1,r}$ be the image of $\va$ by $\~\Phi$. Let $(\va=\va_0, \va_1, \dots, \va_m=\vb)$ be a path from $\va$ to $\vb$. Point~\eqref{5:claim:variation:adjacent} of Claim~\ref{5:claim:variation} used $m$ times show that
\begin{gather*}
h(\va; t)-\eta_{\va_0,\va_1}\cdots\eta_{\va_{m-1},\va_m}h(\vb; t)=\O_y(f_1^2).
\end{gather*}

It remains to prove that
\begin{equation} \label{5:eqn:variation:s=s}
s(\va)=\eta_{\va_0,\va_1}\cdots\eta_{\va_{m-1},\va_m}s(\vb).
\end{equation}
We can assume without loss of generality that $s(\va_0), \dots, s(\va_m)$ are nonzero. Indeed, this is true for all $\beta$ belonging to a dense subset of $\vect{u}$. Moreover, both members of Equation~\eqref{5:eqn:variation:s=s} are continuous in $\beta$.

Point~\eqref{5:claim:variation:normal_adjacent_2} of Claim~\ref{5:claim:variation} used $m$ times show that
\begin{gather*}
\frac{s(\va)}{\det(V_{\compl I})^k\det(W_{\compl J})^k}=\eta_{\va_0,\va_1}\cdots\eta_{\va_{m-1},\va_m}\frac{s(\vb)}{\det(W_{\compl{J'}})^k\det(V_{\compl{I'}})^k}.
\end{gather*}
It remains to show that $\det(V_{\compl I})^k\det(W_{\compl J})^k=\det(V_{\compl{I'}})^k\det(W_{\compl{J'}})^k$, \ie, that
\[ \left\vert\frac{\det(V_{\compl{I'}})}{\det(V_{\compl{I}})}\right\vert=\left\vert\frac{\det(W_{\compl{J}})}{\det(W_{\compl{J'}})}\right\vert. \]
The proof will be very similar to the proof of~\eqref{5:claim:variation:normal_adjacent} in Claim~\ref{5:claim:variation}. We use the same notations. It is enough to show that the analogous of Equation~\eqref{5:eqn:claim:variation:normal_adjacent} holds. Since we do not care about signs, this analogous is
\begin{equation} \label{5:eqn:claim:variation_2_1}
\left\vert\frac{\det(\~U_{I'})}{\det(\~U_{I})}\right\vert=\left\vert\frac{\det(\~U_{J+(n+1)})}{\det(\~U_{J'+(n+1)})}\right\vert.
\end{equation}
Set $(A,B):=\MCP(I,J)$. From $\cl(A)=\cl(B)$ we deduce that there exists a (unique) matrix $\~P\in\M_r(\R)$, of the form
\[ \pbmatrix{P}{0}{0}{\Id}, \]
such that $\~U_B\concat\~U_{J\setminus B}\concat\~U_{\{n+1\}}=(\~U_A\concat\~U_{J\setminus B}\concat\~U_{\{n+1\}})\~P$. Therefore, the second ratio of~\eqref{5:eqn:claim:variation_2_1} equals $\abs{\det(P)}$. Moreover, $\~U_B\concat\~U_{I\setminus A}=(\~U_A\concat\~U_{I\setminus A})\~P$. Thus, the first ratio also equals $\abs{\det(P)}$. Finally, Equation~\eqref{5:eqn:variation:s=s} is true, which concludes the proof. \qedhere
\end{enumerate}
\end{proof}

We recall that we wanted to show Equation~\eqref{5:eqn:variation:compensation}, which is:
\[ \sum_{\va\in\vV'_{r,r-1}}s(\va)h(\va;t)-\sum_{\va\in\vV'_{r-1,r}}s(\va)h(\va;t)=\O_y(f_1^2). \]
By Claim~\ref{5:claim:variation_2}, this equation is equivalent to
\[ \sum_{\va\in\vV'_{r,r-1}}s(\va)h(\va;t)-s(\~\Phi(\va))h(\~\Phi(\va);t)=\O_y(f_1^2), \]
which is true by the second point of the claim. Finally,
\[ g_1f_2-f_1g_2=\O_y(f_1^2), \]
which concludes the proof of the theorem. \hfill$\square$

\bigskip

One can easily obtain from Theorem~\ref{5:thm:variation} the following corollary.

\begin{corollary} \label{5:cor:variation}
Let $l$ be a positive integer, $u'$ be a family of $l$ vectors in $\vect u$ and $v'$ be such that $Uv'_i=u'_i$, for every $i\in\zint1l$. Let $\fF$ and $Y$ be two functions as in Theorem~\ref{5:thm:variation}. Then
\[ \frac{\norm{\mat v^\tran\wedge\mat{v'}}_y^k}{\norm{\mat v^\tran}_y^k}-\frac{\norm{\mat v^\tran\wedge\mat{v'}}_{y+\f}^k}{\norm{\mat v^\tran}_{y+\f}^k}=\O_y(\max_{i\in\zint1n}(y_i^{l-1})). \]
\end{corollary}

\begin{proof}
If $u'$ is not free, then both ratios equal zero. Otherwise, one can apply Theorem~\ref{5:thm:variation} to well-chosen families, and obtain
\begin{align*}
\frac{\norm{\mat v^\tran\wedge\mat{v'}}_{y+\f}^k}{\norm{\mat v^\tran}_{y+\f}^k}
 &= \frac{\norm{\mat v^\tran\wedge\mat{v'}}_{y+\f}^k}{\norm{\mat v^\tran\wedge\mat{v'}_{\zint1{l-1}}}_{y+\f}^k}\,\frac{\norm{\mat v^\tran\wedge\mat{v'}_{\zint1{l-1}}}_{y+\f}^k}{\norm{\mat v^\tran\wedge\mat{v'}_{\zint1{l-2}}}_{y+\f}^k}\,\cdots\,\frac{\norm{\mat v^\tran\wedge\mat{v'}_{\zint11}}_{y+\f}^k}{\norm{\mat v^\tran}_{y+\f}^k} \\
 &= \left(\frac{\norm{\mat v^\tran\wedge\mat{v'}}_y^k}{\norm{\mat v^\tran\wedge\mat{v'}_{\zint1{l-1}}}_y^k}+\O_y(1)\right)\,\cdots\,\left(\frac{\norm{\mat v^\tran\wedge\mat{v'}_{\zint11}}_y^k}{\norm{\mat v^\tran}_y^k}+\O_y(1)\right).
\end{align*}
In last member, every ratio is a Symanzik rational fraction. Let us prove that all Symanzik rational fraction always are a $\O_y(\max_{i\in\zint1n}(y_i))$. Indeed, all coefficients are positive, and if $\lambda_Ky^K$ is a monomial of the numerator, then $\compl K\in\Ind_{r-1}$. Therefore, there exists $I\in\Ind_r(\Ma)$ containing $\compl K$. Then, $y^{\compl I}$ appears in the denominator with a positive coefficient. And, $y^K/y^{\compl I}=\O_y(\max_{i\in\zint1n}(y_i))$. Finally, the product equals
\[ \frac{\norm{\mat v^\tran\wedge\mat{v'}}_y^k}{\norm{\mat v^\tran}_y^k}+\O_y(\max_{i\in\zint1n}(y_i^{l-1})). \qedhere \]
\end{proof}

As the following example shows, one cannot expect a better asymptotic for Corollary~\ref{5:cor:variation}.

\begin{example}
We set
\[ U=\begin{pmatrix} 0 & 1 & 0 & 0 \\ 0 & 0 & 1 & 0 \\ 0 & 0 & 0 & 1\end{pmatrix},\quad V^\tran=\begin{pmatrix} 1 \\ 0 \\ 0 \\ 0 \end{pmatrix},\quad V'=\begin{pmatrix} 0 & 0 & 0 \\ 1 & 0 & 0 \\ 0 & 1 & 0 \\ 0 & 0 & 1 \end{pmatrix}. \]
And $\fF$ will be constant equal to
\[ \begin{pmatrix} 0 & 0 & 0 & 0 \\ 0 & 0 & 0 & 0 \\ 0 & 0 & 0 & 0 \\ 0 & 0 & 0 & -1 \end{pmatrix}. \]
Thus,
\[ \norm{\mat v^\tran\wedge\mat v'}^k_y = y_1y_2y_3y_4, \quad
 \norm{\mat v^\tran}_y^k = y_1, \quad
 \norm{\mat v^\tran\wedge\mat{v'}}_{y+\f}^k = y_1y_2y_3(y_4-1), \quad
 \norm{\mat v^\tran}_{y+\f}^k = y_1. \]
Finally,
\[ \frac{\norm{\mat v^\tran\wedge\mat{v'}}_y^k}{\norm{\mat v^\tran}_y^k}-\frac{\norm{\mat v^\tran\wedge\mat{v'}}_{y+\f}^k}{\norm{\mat v^\tran}_{y+\f}^k}
 = \frac{y_1y_2y_3y_4}{y_1}-\frac{y_1y_2y_3(y_4-1)}{y_1}
 = y_2y_3. \qedhere \]
\end{example}

\bibliographystyle{abstract}
\bibliography{$HOME/bibliography/bibliography}

\end{document}